\newtheorem{thm}{Theorem}[section]
\newtheorem{cor}[thm]{Corollary}
\newtheorem{lem}[thm]{Lemma}
\newtheorem{prop}[thm]{Proposition}
\theoremstyle{definition}
\newtheorem{defn}[thm]{Definition}
\newtheorem{question}[thm]{Question}
\newtheorem{example}[thm]{Example}
\theoremstyle{remark}
\newtheorem{rem}[thm]{Remark}
\numberwithin{equation}{section}
\newcommand{\R}{\mathbb{R}}
\newcommand{\N}{\mathbb{N}}
\newcommand{\Z}{\mathbb{Z}}
\newcommand{\C}{\mathbb{C}}
\newcommand{\F}{\mathbb{F}}
\newcommand{\tn}{\textnormal}
\newcommand{\pf}{\text{PF}}
\newcommand{\la}{\langle}
\newcommand{\ra}{\rangle}
\newcommand{\lla}{\left\langle}
\newcommand{\rra}{\right\rangle}
\newcommand{\mc}{\mathcal}
\newcommand{\mr}{\mathrm}
\newcommand{\Hi}{\mc H}
\newcommand{\HCp}{\mc H_{\mr C_p}}
\newcommand{\KRq}{\mc K_{\mr R_q}}
\newcommand{\HCq}{\mc H_{\mr C_q}}
\newcommand{\KRp}{\mc K_{\mr R_p}}
\newcommand{\HRp}{\mc H_{\mr R_p}}
\newcommand{\KCp}{\mc K_{\mr C_p}}
\newcommand{\opc}{\otimes_p^{\mr c}}
\newcommand{\opt}{\otimes_p}
\newcommand{\opqc}{\otimes_{p,q}^{\mr c}}
\newcommand{\opq}{\otimes_{p,q}}
\newcommand{\op}{{\mr{op}}}
\begin{document}

\title[]
{New tensor products of C*-algebras and characterization of type I C*-algebras as rigidly symmetric C*-algebras}

\author{Hun Hee Lee}
\address{Department of Mathematical Sciences, Seoul National University, San56-1 Shinrim-dong Kwanak-gu, Seoul 151-747, Republic of Korea}
\email{hunheelee@snu.ac.kr}

\author{Ebrahim Samei}
\address{Department of Mathematics and Statistics, University of Saskatchewan, Saskatoon, Saskatchewan, S7N 5E6, Canada}
\email{samei@math.usask.ca}

\author{Matthew Wiersma}
\address{Department of Mathematics and Statistics, University of Winnipeg, 515 Portage Avenue,
Winnipeg, Manitoba, Canada  R3B 2E9}
\email{m.wiersma@uwinnipeg.ca}


\maketitle


\begin{abstract}

Inspired by recent developments in the theory of Banach and operator algebras of locally compact groups, we construct several new classes of bifunctors $(A,B)\mapsto A\otimes_{\alpha} B$, where $A\otimes_\alpha B$ is a cross norm completion of $A\odot B$ for each pair of C*-algebras $A$ and $B$. For the first class of bifunctors considered $(A,B)\mapsto A{\otimes_p} B$ ($1\leq p\leq\infty$), $A{\otimes_p} B$ is a Banach algebra cross-norm completion of $A\odot B$ constructed in a fashion similar to $p$-pseudofunctions $\text{PF}^*_p(G)$ of a locally compact group. Taking a cue from the recently introduced symmetrized $p$-pseudofunctions due to Liao and Yu and later by the second and the third name authors, we also consider ${\otimes_{p,q}}$ for H\"older conjugate $p,q\in [1,\infty]$ -- a Banach $*$-algebra analogue of the tensor product ${\otimes_{p,q}}$. By taking enveloping C*-algebras of $A{\otimes_{p,q}} B$, we arrive at a third bifunctor $(A,B)\mapsto A{\otimes_{\mathrm C^*_{p,q}}} B$ where the resulting algebra $A{\otimes_{\mathrm C^*_{p,q}}} B$ is a C*-algebra.

For $G_1$ and $G_2$ belonging to a large class of discrete groups, we show that the tensor products $\mathrm C^*_{\mathrm r}(G_1){\otimes_{\mathrm C^*_{p,q}}}\mathrm C^*_{\mathrm r}(G_2)$ coincide with a Brown-Guentner type C*-completion of $\mathrm \ell^1(G_1\times G_2)$ and conclude that if $2\leq p'<p\leq\infty$, then the canonical quotient map $\mathrm C^*_{\mathrm r}(G){\otimes_{\mathrm C^*_{p,q}}}\mathrm C^*_{\mathrm r}(G)\to \mathrm C^*_{\mathrm r}(G){\otimes_{\mathrm C^*_{p,q}}}\mathrm C^*_{\mathrm r}(G)$ is not injective for a large class of non-amenable discrete groups possessing both the rapid decay property and Haagerup's approximation property.

A Banach $*$-algebra $A$ is \emph{symmetric} if the spectrum $\mr{Sp}_A(a^*a)$ is contained in $[0,\infty)$ for every $a\in A$, and \emph{rigidly symmetric} if $A\otimes_{\gamma} B$ is symmetric for every C*-algebra $B$. A theorem of Kugler asserts that every type I C*-algebra is rigidly symmetric. Leveraging our new constructions, we establish the converse of Kugler's theorem by showing for C*-algebras $A$ and $B$ that $A\otimes_{\gamma}B$ is symmetric if and only if $A$ or $B$ is type I. In particular, a C*-algebra is rigidly symmetric if and only if it is type I. This strongly settles a question of Leptin and Poguntke from 1979 and corrects an error in the literature.

\end{abstract}

\section{Introduction}

Let $A$ and $B$ be C*-algebra. Though the maximal and minimal tensor products $A\otimes_{\max}B$ and $A\otimes_{\min}B$ are the canonical C*-completions of the algebraic tensor product $A\odot B$, they need not be the only C*-completions of $A\odot B$. Indeed, the literature contains several different adhoc constructions of C*-completions of $A\odot B$ for special classes of C*-algebras. For example, Effros and Lance study the ``binormal tensor product'' $M\otimes_{\mr{bin}}N$ of two von Neumann algebras $M$ and $N$, and the ``normal tensor product'' $M\otimes_{\mr{nor}} B$ of a von Neumann algebra $M$ and a C*-algebra $B$ in \cite{EL}, and the third author studied intermediate C*-completions of the tensor product of discrete group C*-algebras in \cite{W-tensor}. However, very little is known about finding potentially intermediate C*-completions of $A\odot B$ for \textit{arbitrary} C*-algebras $A$ and $B$. Indeed, only the article \cite{OP} of Ozawa and Pisier has previously been able to exhibit such constructions.

An analogous problem to finding C*-completions of $A\odot B$ for C*-algebras $A$ and $B$ is that of finding \emph{exotic group C*-algebras}, that is C*-completions of the group algebra $\mr L^1(G)$ for a locally compact group $G$ that is intermediate to the full and reduced group C*-algebras $\mr C^*(G)$ and $\mr C^*_{\mr r}(G)$. Thanks in part to its connections with the representation theory of locally compact groups and the revised Baum-Connes conjecture with coefficients, exotic group C*-algebras are much better understood than their C*-tensor product counterparts. The primary examples of potentially exotic group C*-algebra are the \emph{$\mr L^p$ group C*-algebras} $\mr C^*_{\mr L^p}(G)$ introduced by Brown and Guentner in \cite{BG}. Roughly speaking, if $2\leq p\leq \infty$, then $\mr C^*_{\mr L^p}(G)$ is the completion of $\mr L^1(G)$ with respect to the supremum of norms associated to unitary representations $\pi$ of $G$ for which ``many'' matrix coefficients of $\pi$ are $\mr L^p$-integrable. The literature on $\mr L^p$ group C*-algebras $\mr C^*_{\mr L^p}(G)$ is rapidly expanding (see \cite{BR,BEW1,BEW2,dLS,KLQ2,KLQ1,Okayasu,RW,SW-exotic,W-const,W}).
A major breakthrough in the study of $\mr L^p$ group C*-algebras occurred in \cite{SW-exotic}, where a potentially different yet intimately related construction of potentially exotic group C*-algebras appeared. Using the connection between these two potentially different constructions, the second and third authors greatly expanded the class of locally compact groups $G$ for which the C*-algebras $\mr C^*_{\mr L^p}(G)$ are well understood. This potentially different construction considered in \cite{SW-exotic} motivates the main constructions in this paper and, as such, we will now describe it.

If $G$ is a locally compact group and $1\leq p\leq \infty$, then the \textit{$p$-pseudofunctions} $\pf_p(G)$ is the closure of $\mr L^1(G)$ inside of $\mr B(\mr L^p(G))$ where the inclusion of $\mr L^1(G)$ into $\mr B(\mr L^p(G))$ is given by left convolution operators. The $p$-pseudofunctions is a construction of historical significance within the context of Banach algebra theory as well as abstract harmonic analysis. In present day, the Banach algebras $\pf_p(G)$ is actively being studied as an ``$\mr L^p$ operator algebras'' (e.g., see \cite{BP,Gardella,GT,PH}). Recently, Liao and Yu introduced the \emph{symmetrized $p$-pseudofunctions} $\pf_p^*(G)$ (see \cite{LY}), which is the completion of $\mr L^1(G)$ with respect to the norm $\|\cdot\|_{\pf_p^*}$ defined by $\|f\|_{\pf^*_p}:=\max\{\|f\|_{\pf_p},\|f\|_{\pf_q}\}$ where $q$ is the H\"older conjugate of $p$. The symmetrized $p$-pseudofunctions $\pf_p^*(G)$ is much better behaved than $\pf_p(G)$ and, in particular, is a Banach $*$-algebra with respect to the involution coming from $\mr L^1(G)$. The enveloping C*-algebras $\mr C^*(\pf_p^*(G))$ are the classes of potentially exotic group C*-algebras considered by the second and third author in \cite{SW-exotic}. It is unknown whether $\mr C^*(\pf_p^*(G))$ coincides with $\mr C^*_{\mr L^p}(G)$ for all $2\leq p\leq \infty$ and locally compact groups $G$, but equality between these two potentially exotic group C*-algebra constructions has been established for a large class of non-amenable locally compact groups (see \cite{SW-exotic}).

For each $1\leq p\leq \infty$ and every pair of C*-algebras $A$ and $B$, we construct a Banach algebra completion $A{\opt} B$ of $A\odot B$ by considering the closure of $A\odot B$ inside the completely bounded operators on an operator space that can be identified with the Schatten $p$-class operators. Since the Schatten $p$-class operators are a noncommutative $\mr L^p$-space, this completion can be viewed as an analogue of $\pf_p(G)$. In the case when $p=2$, we have that $A{\otimes_2} B\cong A\otimes_{\min}B$ canonically. In the case when $p=1$ or $p=\infty$, the Banach algebras $A{\opt} B$ can be completely described in terms of the Haagerup tensor product. Though we only prove basic properties about the bifunctor $(A,B)\mapsto A{\opt} B$, we believe these Banach algebras will be of independent interest to those studying the $p$-pseudofunctions and will perhaps provide a starting place for studying ``Schatten $p$-class operator algebras''. Taking cues from the introduction of the symmetrized $p$-pseudofunctions, we also consider a ``symmetrized'' variant of ${\opt}$: for each H\"older conjugate pair $p,q\in [1,\infty]$ we obtain a bifunctor $(A,B)\mapsto A{\opq} B$ that assigns to each pair of C*-algebras $A$ and $B$ a Banach $*$-algebra completion of $A\odot B$. By considering the enveloping C*-algebra of $A{\opq} B$, we then arrive at a bifunctor $(A,B)\mapsto A{\otimes_{\mr C^*_{p,q}}} B$ that outputs a C*-completion of $A\odot B$ for all C*-algebras $A$ and $B$.

The C*-algebras $A{\otimes_{\mr C^*_{p,q}}} B$ are currently best understood when $A$ and $B$ are reduced group C*-algebras of certain discrete groups. Indeed, for ``many'' discrete groups with the rapid decay property, the C*-algebras $\mr C^*_{\mr r}(G_1){\otimes_{\mr C^*_{p,q}}} \mr C^*_{\mr r}(G_2)$ coincide with a Brown-Guentner type C*-completion of $\ell^1(G_1\times G_2)$, and we characterize the positive definite functions on $G_1\times G_2$ that extend to a positive linear functional on $\mr C^*_{\mr r}(G_1){\otimes_{\mr C^*_{p,q}}} \mr C^*_{\mr r}(G_2)$ for H\"older conjugate $p,q\in [1,\infty]$ with $q\leq p$ as being those that are ``almost'' the matrix forms of Schatten $p$-class operators. Consequently, the C*-algebras $\mr C^*_{\mr r}(G){\otimes_{\mr C^*_{p,q}}} \mr C^*_{\mr r}(G)$ are pairwise distinct for H\"older conjugate $p,q\in [1,\infty]$ with $q\leq p$ when $G$ belongs to a large class of non-amenable discrete groups possessing both the rapid decay property and the Haagerup approximation property.

So how do these newly introduced tensor products ${\otimes_{\mr C^*_{p,q}}}$ compare to those constructed by Ozawa and Pisier? We can definitively show that for all H\"older conjugate $p,q\in [1,\infty]\backslash \{2\}$, the bifunctor $(A,B)\mapsto A{\otimes_{\mr C^*_{p,q}}}B$ differs from each of the Ozawa-Pisier bifunctors since the Ozawa-Pisier tensor products are injective, whereas the tensor product ${\otimes_{\mr C^*_{p,q}}}$ is not. The classes of C*-algebras $A$ and $B$ for which the $2^{\aleph_0}$ Ozawa-Pisier bifunctors are shown to produce pairwise distinct C*-completions of $A\odot B$ also differs from those considered in this paper. Indeed, the main result in the Ozawa-Pisier paper is that if $M$ and $N$ are von Neumann algebras that are not nuclear (e.g., $\mr B(\ell^2(\N))$), then the bifunctors $(A,B)\mapsto A\otimes_{\alpha} B$ constructed produce $2^{\aleph_0}$ distinct C*-norms on $M\odot B$. In contrast we do not know whether the analogue of this result is true for our tensor products ${\otimes_{\mr C^*_{p,q}}}$. However, we have a very good description of what $\mr C^*_{\mr r}(G_1){\otimes_{\mr C^*_{p,q}}} \mr C^*_{\mr r}(G_2)$ looks like for a large class of discrete groups, whereas it is currently not even known whether the Ozawa-Pisier bifunctors produce any C*-norms that differ from the minimal C*-norm on $\mr C^*_{\mr r}(\F_2)\odot \mr C^*_{\mr r}(\F_2)$.

As an application of the constructions in this paper, we provide a new characterization of type I C*-algebras. A (complex) $*$-algebra $A$ is \textit{symmetric} if for every $a\in A$, the spectrum $\mr{Sp}_A(a^*a)$ of $a^*a$ is contained in $[0,\infty)$. The symmetric condition is extremely important and, in the context of Banach $*$-algebras, has applications to many areas of mathematics including approximation theory, time-frequency analysis and signal processing, non-commutative geometry and geometric group theory (see \cite{GK1}, \cite{GK2}, \cite{GL2}, \cite{black-cuntz}, \cite{rennie}, \cite{Jollisaint}, \cite{Chat} and references therein). A Banach $*$-algebra $A$ is \textit{rigidly symmetric} if the projective tensor product $A\otimes_{\gamma} B$ is symmetric for every C*-algebra $B$. Recently the theory of rigidly symmetric C*-algebras have found applications within the study of single operator theory (see \cite{GKL1,FL,GKL2}). In 1979 Kugler proved that every type I C*-algebra is rigidly symmetric (see \cite{Kugler}), generalizing an earlier result of Leptin stating that $\mr K(\mc H)$ is rigidly symmetric for every Hilbert space $\mc H$ (see \cite{Leptin}).

In their 1979 paper that formalized the notion of rigidly symmetric Banach $*$-algebras (see \cite{LP}), Leptin and Poguntke asked whether every symmetric Banach $*$-algebra is rigidly symmetric and, in particular, whether the C*-algebra $\mr B(\ell^2(\N))$ is rigidly symmetric. The question of whether symmetric Banach $*$-algebras are rigidly symmetric was reiterated in the 1992 paper \cite{Poguntke}. More recently, in 2008 and 2019, this question has been restated in the specific case of $\mr L^1(G)$ where $G$ is a locally compact group  (see \cite{GKL1,FL}), which is the class of Banach $*$-algebras where the condition of being rigidly symmetric is currently finding the most applications. In 2012 Kumar and Rajpal published a paper claiming that every C*-algebra is rigidly symmetric (see \cite[Theorem 2.1]{KR}) which, in particular, would have shown that $\mr B(\mc \ell^2(\N))$ is rigidly symmetric. Unfortunately there is an error in their proof and a remarkably simple counterexample is noted in Example \ref{ex:counterexample} below. We strongly correct the above mentioned error in the literature by establishing the converse of Kugler's theorem. Hence, a C*-algebra is rigidly symmetric if and only if it is type I. Moreover, the following is true.

\begin{thm}[Corollary \ref{cor:Herm-main}]
		If $A$ and $B$ are C*-algebras, then $A\otimes_{\gamma} B$ is symmetric if and only if $A$ or $B$ is type I.
\end{thm}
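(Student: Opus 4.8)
The plan is to establish the two implications separately. If $A$ or $B$ is type I then it is rigidly symmetric by Kugler's theorem, so $A \otimes_{\gamma} B$ is symmetric; this is the routine direction and uses no new machinery. The content lies in the contrapositive of the converse: assuming neither $A$ nor $B$ is type I, I would show that $A \otimes_{\gamma} B$ is not symmetric. Throughout I would use that symmetry is equivalent to being Hermitian (Shirali--Ford), so that it is enough to produce a single self-adjoint element $h \in A \odot B$ whose spectrum in $A \otimes_{\gamma} B$ is not contained in $\R$.

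The mechanism for detecting non-real spectrum is the non-involutive Banach algebras $A \opt B$ constructed earlier in the paper. For each $p$ the canonical map $A \odot B \to A \opt B$ extends to a contractive homomorphism $A \otimes_{\gamma} B \to A \opt B$ with dense range, since the projective norm dominates every cross-norm; consequently the Banach-algebra spectrum can only shrink, $\mr{Sp}_{A \opt B}(h) \subseteq \mr{Sp}_{A \otimes_{\gamma} B}(h) \cup \{0\}$ for $h \in A \odot B$. Hence, to refute symmetry of $A \otimes_{\gamma} B$, it suffices to exhibit, for some $p \neq 2$, a self-adjoint $h \in A \odot B$ whose image in $A \opt B$ has a non-real spectral value. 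This is exactly the tensor-product incarnation of the classical failure of symmetry of the $p$-pseudofunctions $\pf_p(G)$ for $p \neq 2$: because $A \opt B$ is modelled on operators on a Schatten $p$-class, the involution of $A \odot B$ is not isometric for its norm and self-adjoint elements need not retain real spectrum.

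To carry this out for arbitrary non-type-I $A$ and $B$, I would first reduce to a concrete model. By Glimm's dichotomy, a non-type-I C*-algebra admits a C*-subalgebra with a quotient isomorphic to the CAR algebra, and one can arrange $*$-homomorphisms from $A$ and $B$ onto tractable targets $\bar A$ and $\bar B$ (the CAR algebra, or a reduced group C*-algebra such as $C^*_{\mr r}(\F_2)$ when non-amenability is available). Functoriality of the constructions then yields a chain of dense-range homomorphisms $A \otimes_{\gamma} B \to \bar A \otimes_{\gamma} \bar B \to \bar A \opt \bar B$ along which the spectrum shrinks, so it is enough to find a self-adjoint $h$ in the algebraic tensor product of $\bar A$ and $\bar B$ whose spectrum in $\bar A \opt \bar B$ leaves the real axis. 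In the model setting I would compute this spectrum using the Schatten $p$-class realization together with the paper's description of the positive functionals on the C*-completions $\bar A \otimes_{\Cpq} \bar B$ and their pairwise distinctness, which encode precisely the Schatten $p$-class behaviour forcing non-self-adjointness of the relevant matrix models.

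The main obstacle is two-fold. First is the spectral computation in the model: one must genuinely certify the failure of symmetry --- either a non-real spectral value for some self-adjoint element, or the strict inequality between the Pt\'ak seminorm of $A \otimes_{\gamma} B$ and the maximal C*-norm --- rather than merely that the intermediate C*-norms $\otimes_{\Cpq}$ differ, since distinctness of those C*-completions alone is consistent with symmetry of the projective tensor product. Second, and most delicate, is the nuclear non-type-I case, exemplified by the CAR algebra: there $\min$ and $\max$ coincide, so the obstruction cannot be seen at the level of any C*-completion and must be extracted purely from the non-involutive Schatten $p$-class structure of $\opt$ for $p \neq 2$, without recourse to non-amenability. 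Ensuring that the reduction through Glimm produces a subquotient compatible with the functoriality of $\opt$, so that the manufactured non-real spectrum lifts back to $A \otimes_{\gamma} B$, is where the remaining work concentrates.
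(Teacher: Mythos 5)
Your outer frame is sound and, in its skeleton, matches the paper's: Kugler's theorem handles the type-I direction, and for the converse you pass to a dense-range contractive homomorphism out of $A\otimes_\gamma B$ and use that spectra can only shrink, so a failure of symmetry detected downstream lifts back to the projective tensor product (this is exactly the role played by Lemma \ref{lem:quasi-obvious}(2) in the paper). But the two ``obstacles'' you flag at the end are not loose ends --- they are the entire content of the theorem, and your proposed path to them points the wrong way. The reduction step is the clearest instance: you invoke Glimm, whose output is the CAR algebra, and you correctly observe that since the CAR algebra is nuclear, no argument that distinguishes C*-completions of $\bar A\odot\bar B$ can see anything there; you then leave that case open. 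The paper avoids it entirely by using Blackadar's theorem \cite{Blackadar} instead of Glimm: the unitization of \emph{any} non-type-I C*-algebra contains a C*-subalgebra that quotients onto the Choi algebra $\mr C^*_{\mr r}(\Z_2*\Z_3)$, and since $\F_2\leq \Z_2*\Z_3$ gives $\mr C^*_{\mr r}(\F_2)\subseteq \mr C^*_{\mr r}(\Z_2*\Z_3)$, one always obtains a C*-subalgebra quotienting onto $\mr C^*_{\mr r}(\F_2)$. So ``non-amenability is available'' for every non-type-I C*-algebra, including the CAR algebra, and the delicate nuclear case you worry about never has to be treated on its own terms.

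The second gap is the certificate of non-symmetry itself, which you never produce. Your plan is to exhibit a self-adjoint $h\in A\odot B$ with a non-real spectral value in the non-involutive algebra $A\opt B$, and you assert that distinctness of the completions $\otimes_{\Cpq}$ ``alone is consistent with symmetry of the projective tensor product.'' The paper's Lemma \ref{lem:Hem-main-lem} proves precisely the opposite implication, and it is the bridge your proposal is missing: if $A\odot B$ were quasi-symmetric in the \emph{symmetrized} algebra $A\opq B$ (which symmetry of $A\otimes_\gamma B$ forces, via the dense-range $*$-homomorphism and Lemma \ref{lem:quasi-obvious}(2)), then a finite-rank perturbation estimate combined with the spectral interpolation theorem of \cite{SW-exotic} (Theorem \ref{thm:SW}, applied to the triple $(C\otimes_{p,q}D,\,C\otimes_{p',q'}D,\,C\otimes_{\min}D)$) yields $\mr r_{C\opq D}(x^*x)=\mr r_{C\otimes_{\min}D}(x^*x)$ for all subquotients $C,D$, hence $C\otimes_{\Cpq}D\cong C\otimes_{\min}D$ canonically --- contradicting Corollary \ref{cor:different-red-group} for $C=D=\mr C^*_{\mr r}(\F_2)$. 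Three features of this argument are essential and absent from your proposal: it works in the Banach $*$-algebra $\opq$ rather than your $\opt$, because Theorem \ref{thm:SW} requires $*$-semisimple Banach $*$-algebras (Remark \ref{rem:*-semisimple}); it is a proof by contradiction that never computes a spectrum or exhibits a non-real spectral value, so Shirali--Ford is not needed (failure of quasi-symmetry, i.e.\ $\mr{Sp}(x^*x)\not\subseteq[0,\infty)$ for some $x$, already negates symmetry); and it requires a unitization step (Lemma \ref{lem:unitizations}) to return from $\widetilde A,\widetilde B$ to $A,B$. As written, your proposal sets up the problem correctly but does not solve it.
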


We finish the introduction by giving an elementary construction of a C*-algebra that is not rigidly symmetric. This appears to give the first example of a symmetric Banach $*$-algebra that is not rigidly symmetric, though it is possible that other counterexamples may have been known to experts due to its simplicity.

\begin{example}\label{ex:counterexample}
	We will show that $\mr C^*_{\mr r}(\F_2)\otimes_{\gamma} \mr C^*_{\mr r}(\F_2)$ is not symmetric. Observe that the diagonal embedding of $\ell^1(\F_2)$ into $\mr C^*_{\mr r}(\F_2)\otimes_\gamma \mr{C^*_r}(\F_2)$ given by
	$$\ell^1(\mathbb F_2)\ni f \mapsto \sum_{s\in \F_2} f(s)\big(\lambda(s)\otimes \lambda(s)\big)\in \mr C^*_{\mr r}(\F_2)\otimes_{\gamma} \mr C^*_{\mr r}(\F_2)$$
	is isometric since the canonical map $$\pi\colon\mr C^*_{\mr r}(\F_2)\otimes_{\gamma} \mr C^*_{\mr r}(\F_2)\to\mr B(\ell^2(\F_2)\otimes_{\gamma} \ell^2(\F_2))$$ is contractive and
	\begin{eqnarray*}
	&&\left\|\pi\bigg(\sum_{s\in \F_2} f(s)\big(\lambda(s)\otimes \lambda(s)\big)\bigg)(\delta_e\otimes \delta_e)\right\|_{\ell^2(\F_2)\otimes_{\gamma} \ell^2(\F_2)}\\
	&=&\left\|\sum_{s\in \F_2} f(s)\big(\delta_s\otimes \delta_s\big)\right\|_{\ell^2(\F_2)\otimes_{\gamma} \ell^2(\F_2)}\\
	&=&\|f\|_{\ell^1}\end{eqnarray*}
	for all $f\in \ell^1(\F_2)$.
	Since $\ell^1(\F_2)$ is non-symmetric (see \cite{palmer}) and the class of symmetric Banach $*$-algebras is closed under the taking of norm-closed $*$-subalgebras, we deduce that $\mr C^*_{\mr r}(\F_2)\otimes_{\gamma} \mr C^*_{\mr r}(\F_2)$ is not symmetric.
\end{example}

\section{Background and notation}

\subsection*{Operator spaces}
The reader will be assumed to be familiar with the basic theory of operator spaces, including the Haagerup tensor product and complex interpolation of operator spaces. The standard references for these topics are \cite{er} and \cite{pisier}.

\subsection*{Notation}

\begin{itemize}
	\item If $X$ is a Banach space, then $X^*$ will denote the dual space of $X$. If $X$ is an operator space, then $X^*$ will denote the operator space dual of $X$.
	\item If $X$ and $Y$ are Banach spaces, then $\mr B(X,Y)$ denotes the space of bounded linear operators from $X$ to $Y$, and $\mr{K}(X,Y)$ denotes the space of compact operators from $X$ to $Y$. If $X$ and $Y$ are operator spaces, then $\mr{CB}(X,Y)$ denotes the space of completely bounded linear maps between $X$ and $Y$.
	\item If $X$ and $Y$ are operator spaces and $T\colon X\to Y$ is a linear operator, we let $\|T\|_{\mr{op}}$ denote the operator norm of $T$ and $\|T\|_{\mr{cb}}$ denote the completely bounded norm {(shortly, cb-norm)} of $T$.
	\item If $\mc H$ and $\mc K$ are Hilbert spaces and $1\leq p\leq \infty$, we let $\mr S_p(\mc H,\mc K)$ denote the space of Schatten $p$-class operators from $\mc H$ to $\mc K$. We will adhere to the convention that $\mr S_\infty(\mc H,\mc K)=\mr K(\mc H,\mc K)$, the space of compact operators from $\mc H$ to $\mc K$ and $\mr S_1(\mc H,\mc K)=\mr{TC}(\mc H,\mc K)$, the trace class operators. We have $\mr{TC}(\mc H,\mc K)^* \cong \mr K(\mc H,\mc K)$ via the duality $\la A, B \ra = \mr{ Tr}(A^TB)$, where $A^T$ is the transpose of $A\in \mr{TC}(\mc H,\mc K)$ and $B\in \mr K(\mc H,\mc K)$. We simply write $\mr S_p(\mc H)$, $\mr{K}(\mc H)$ and $\mr{TC}(\mc H)$ when $\mc H = \mc K$.
	\item If $X$ and $Y$ are Banach spaces and $T\in \mr B(X,Y)$, then $T^\dagger\in \mr B(Y^*,X^*)$ denotes the Banach space adjoint of $T$.
	\item If $\mc H$ and $\mc K$ are Hilbert spaces and $T\in \mr B(\mc H,\mc K)$, then $T^*\in \mr B(\mc K,\mc H)$ denotes the Hilbert space adjoint of $T$.
	
	\item {The Banach space dual $\mc H^*$ of a Hilbert space $\mc H$ can be identified with the complex conjugate $\overline{\mc H}$, where the formal identity map $\mc H \to \overline{\mc H},\; h \mapsto \overline{h}$ is conjugate linear. With this convention we have $T^\dagger (\overline{k}) = \overline{T^* k}$ for $k\in \mc K$ and $T\in \mr B(\mc H,\mc K)$ for another Hilbert space $\mc K$.}
	
	\item Inner products on Hilbert spaces are denoted by $(\cdot, \cdot)$.
	\item If $x\in X$ and $f\in X^*$, we let $\lla x,f\rra=f(x)$. Using the canonical inclusion of $X$ inside of $X^{**}$, we may also write $\lla f,x\rra=f(x)$.
	\item If $X$ and $Y$ are Banach spaces, then $X\otimes_\gamma Y$ denotes the Banach space projective tensor product of $X$ and $Y$. If $X$ and $Y$ are operator spaces of $X$ and $Y$, then $X\otimes_{\mr h} Y$ denotes the Haagerup tensor product, $X\check{\otimes}Y$ denotes the injective operator space tensor product of $X$ and $Y$, $X\hat{\otimes} Y$ denotes the operator space projective tensor product of $X$ and $Y$, and $X^*\otimes_{\mr w^*\mr h}Y^*$ denotes the weak* Haagerup tensor product of $X^*$ and $Y^*$. If $\mc H$ and $\mc K$ are Hilbert spaces, then $\mc H\otimes \mc K$ denotes the Hilbertian tensor product of $\mc H$ and $\mc K$.
	
	\item {The Haagerup tensor product behaves well with respect to complex interpolation of operator space. More precisely, for $0\le \theta \le 1$ and pairs of compatible operator spaces $(X_0, X_1)$ and $(Y_0, Y_1)$ we have $[X_0, X_1]_\theta \otimes_{\mr h} [Y_0, Y_1]_\theta \cong [X_0 \otimes_{\mr h} Y_0, X_1 \otimes_{\mr h} Y_1]_\theta$ completely isometrically via a canonical map.
	}

	\item {We simply write $\ell^2$ for $\ell^2(\N)$.}
	
	\item
	{For each operator space $X$, we define the space of $X$-valued Schatten $p$-class operators from $\mc H$ to $\mc K$ for $1\le p \le \infty$ by
	    $$\mr S_p(\mc H,\mc K;X) := [\mr K(\mc H,\mc K)\check\otimes X,  \mr{TC}(\mc H,\mc K)\hat\otimes X]_{\frac{1}{p}}.$$
	We simply write $\mr S_p(\mc H;X)$ and $\mr S_p(X)$ when $\mc H = \mc K$ and $\mc H = \mc K = \ell^2$, respectively.}

\end{itemize}

\subsection*{Some operator space structures on Hilbert spaces}
We will consider different operator space structures placed upon Hilbert spaces. For a Hilbert space $\mc H$ and $1\leq p\leq \infty$, these include the following:
\begin{itemize}
	\item the \emph{column Hilbert space} $\mc H_{\mr C}:=\mr B(\C,\mc H)$,
	\item the \emph{row Hilbert space} $\mc H_{\mr R}:=\mr B(\overline{\mc H}, \C)$, and
	\item {their complex interpolation spaces
	    $$ \HCp:=[\mc H_{\mr C},\mc H_{\mr R}]_{\frac{1}{p}}\qquad\text{and}\qquad\mc{H}_{\mr R_p}:=[\mc H_{\mr R},\mc H_{\mr C}]_{\frac{1}{p}}.$$
	\item for $p=2$ we have $\mc H_{{\mr C}_2}=\mc H_{{\mr R}_2}$, which we call the \emph{operator Hilbert space}. It is usually denoted by $\mc H_{\mr {oh}}$, which is the unique operator space structure on $\mc H$ such that $\mc H_{\mr {oh}}^*\cong\overline{\mc H}_{\mr {oh}}$ completely isometrically (see \cite[Chapter 7]{pisier} or \cite[Section 3.5]{er}).}
\end{itemize}

{Observe that
$\mc H_{\mr C_\infty} = \mc H_{\mr C}$ and $\mc H_{\mr R_\infty} = \mc H_{\mr R}$. Further, if $q$ denotes the H\"older conjugate of $p$, then $\mc H_{\mr C_p}=\mc H_{\mr R_q}$ and $(\mc H_{\mr C_p})^*\cong (\overline{\mc H})_{\mr R_p}\cong (\overline{\mc H})_{\mr C_q}$ canonically and completely isometrically. The operator spaces $\mc H_{{\mr C_p}}$ and $\mc H_{{\mr R_p}}$ are said to be {\em homogeneous}, which means that any bounded linear map $T: \mc H_{C_p} \to \mc H_{C_p}$ is automatically completely bounded with $\|T\|_{\mr{cb}} = \|T\|_{\mr{op}}$. When $\mc H = \ell^2$ we will simply write $C$, $R$, $C_p$ and $R_p$, $OH$ instead of $\mc H_{\mr C}$, $\mc H_{\mr R}$, $\mc H_{{\mr C_p}}$, $\mc H_{{\mr R_p}}$ and $\mc H_{\mr {oh}}$, respectively.}

\subsection*{Completely contractive Banach algebras}
A \emph{completely contractive Banach algebra} is a Banach algebra $A$ equipped with an operator space structure such that the multiplication map $A\times A\to A$ extends to a completely contractive map $A\hat{\otimes}A\to A$. If $X$ is any (norm complete) operator space, then $\mr{CB}(X)$ is a completely contractive Banach algebra and, hence, every norm-closed subalgebra of $\mr{CB}(X)$ is also a completely contractive Banach algebra. {Some of the} constructions we consider in this paper produce completely contractive Banach algebras, though we will not dwell on this fact.

\subsection*{Opposite C*-algebras}

If $A$ is a C*-algebra and $a\in A$, we let $a^{\mr{op}}$ denote the corresponding element in the opposite algebra $A^{\mr{op}}$. A concrete embedding $A\subset \mr B(\mc H)$ of $A$ gives a corresponding concrete embedding $A^{\mr{op}}\subset \mr B(\overline{\mc H})$ via the identification
$$A^{\op}\ni a^{\op}\leftrightsquigarrow a^\dagger\in \mr B(\overline{\mc H}).$$
This identification will be used without reference throughout the paper.

\section{{The tensor product $\opt$}}

{We begin by defining a new class of tensor products for concrete C*-algebras, which are norm-closed $*$-subalgebras of $\mr B(\mc H)$ for some Hilbert space $\mc H$.}

\begin{defn}\label{defn_rep}
	Let $1\leq p\leq \infty$. For concrete C*-algebras $A\subset \mr B(\mc H)$ and $B\subset \mr B(\mc K)$, consider the representation {$\Pi_p\colon A\odot B\to \mr{CB}(\mc H_{\mr C_p}\otimes_{\mr h}\mc K_{\mr R_p})$} given by
	$$ \Pi_p(a\otimes b)(\xi\otimes \eta)=a\xi\otimes b\eta\qquad$$
	for $a\in A$, $b\in B$, $\xi\in \mc H$, and $\eta\in \mc K$. We let $A\otimes_p^{\mr c} B$ denote {the completely contractive Banach algebra given by} the norm closure of $\Pi_p(A\odot B)$ inside of $\mr{CB}(\mc H_{\mr C_p}\otimes_{\mr h}\mc K_{\mr R_p})$.
\end{defn}

Since the canonical actions of $A$ on $\mc H_{\mr C}$ and $\mc H_{\mr R}$, and $B$ on $\mc K_{\mr C}$ and $\mc K_{\mr R}$ are completely contractive, it follows that the actions of $A$ and $B$ on $\mc H_{\mr C_p}=[\mc H_{\mr C},\mc H_{\mr R}]_{\frac{1}{p}}$ and $\KRp=[\mc K_{\mr R},\mc K_{\mr C}]_{\frac{1}{p}}$ are necessarily also completely contractive. {In particular, the algebra $A\otimes_p^{\mr c} B$ is a cross norm completion of $A\odot B$.}

\begin{rem}
	Let $\mc H$ and $\mc K$ be Hilbert spaces. Recall that
	$$ \mc H_{\mr C_1}\otimes_{\mr h} \mc K_{\mr R_1}=\mc H_{\mr R}\otimes_{\mr h} \mc K_{\mr C}\cong\mr{TC}(\overline{\mc K},\mc H)$$
	and
	$$ \mc H_{\mr C_\infty}\otimes_{\mr h} \mc K_{\mr R_\infty}=\mc H_{\mr C}\otimes_{\mr h} \mc K_{\mr R}\cong\mr{K}(\overline{\mc K},\mc H)$$
	completely isometrically, where we use the identification
	\begin{equation}\label{Eqn:Hilbert-space-identification}\Hi\odot\mc K\ni \sum_{i=1}^n \xi_i\otimes \eta_i \leftrightsquigarrow \sum_{i=1}^n \left( \cdot, \overline{\eta_i}\right)\xi_i\in\mr B(\overline{\mc K},\mc H)\end{equation}
	(see \cite[Proposition 9.3.4]{er}). Hence,
\begin{equation}\label{Eq:Sp-identification}
\HCp\otimes_{\mr h}\KRp \cong[\mc H_{\mr C}\otimes_{\mr h} \mc K_{\mr R}, \mc H_{\mr R}\otimes_{\mr h}\mc K_{\mr C}]_\frac{1}{p}\cong [\mr{K}(\overline{\mc K},\mc H),\mr{TC}(\overline{\mc K},\mc H)]_{\frac{1}{p}}\cong\mr S_p(\overline{\mc K},\mc H)
\end{equation}
	completely isometrically for each $1\leq p\leq \infty$. 
	
	Suppose $A\subset \mr B(\mc H)$ and $B\subset \mr B(\mc K)$ are concrete C*-algebras. Identifying $\HCp\otimes_{\mr h}\KRp$ with $S_p(\overline{\mc K},\mc H)$ as above, $\pi_p$ may be viewed as a representation of $A\odot B$ into $\mr{CB}(\mr S_p(\overline{\mc K},\mc H))$ where
	\begin{equation}\label{eq:action-on-Sp}
	\Pi_p(a\otimes b)T=aTb^{\dagger}
	\end{equation}
	for all $a\in \mr B(\mc H)$, $b\in \mr B(\mc K)$, and $T\in \mr S_p(\overline{\mc K},\mc H)$. As such, we may alternatively view $A\opc B$ as being a norm closed subalgebra of {$\mr{CB}(\mr S_p(\overline{\mc K},\mc H)).$}	
	The identification of $\mc H_{\mr C_p}\otimes_{\mr h} \mc K_{\mr R_p}$ with $\mr S_p(\overline{\mc K},\mc H)$, and the corresponding form of $\Pi_p$ described by Equation \eqref{eq:action-on-Sp} will be used repeatedly throughout the paper.
\end{rem}

	As alluded to in the introduction, the above remark allows one to think of the tensor product $A\opc B\subset \mr{CB}(\mr S_p(\overline{\mc K},\mc H))$ as being an analogue of the Banach algebra of $p$-pseudofunctions $\pf_p(G)\subset \mr B(\mr L^p(G))$ for a locally compact group $G$ since the Schatten $p$-classes are non-commutative $\mr L^p$-spaces.

	{We can completely describe the operator space structure of $A\otimes_p^{\rm c} B$ in the cases when $p=1$ and $p=\infty$.}	
{\begin{prop}\label{prop:p=1 or p=infty}
	Let $A\subset \mr B(\mc H)$ and $B\subset \mr B(\mc K)$ be concrete C*-algebras.
	\begin{enumerate}
		\item The map $J_\infty\colon A\otimes_\infty^{\mr c}B \to A\otimes_{\mr h}B^{\op}$ defined on elementary tensors by $J_\infty(a\otimes b)=a\otimes b^{\op}$ is a complete isometry.
		\item The map $J_1\colon A\otimes_1^{\mr c} B\to B^\op\otimes_{\mr h} A$ defined on elementary tensors by $ J_1(a\otimes b)=b^{\op}\otimes a$ is a complete isometry.
	\end{enumerate}
\end{prop}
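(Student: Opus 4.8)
The plan is to realize the representations $\Pi_\infty$ and $\Pi_1$ as algebras of two-sided multiplication operators on the Schatten classes $\mathrm S_\infty(\overline{\mathcal K},\mathcal H)=\mathrm K(\overline{\mathcal K},\mathcal H)$ and $\mathrm S_1(\overline{\mathcal K},\mathcal H)=\mathrm{TC}(\overline{\mathcal K},\mathcal H)$, and then to identify these with Haagerup tensor products via the classical description of completely bounded elementary operators. Throughout I would use the formula $\Pi_p(a\otimes b)T=aTb^{\dagger}$ from \eqref{eq:action-on-Sp}, the concrete embedding $B^{\mathrm{op}}\subset\mathrm B(\overline{\mathcal K})$, $b^{\mathrm{op}}\mapsto b^{\dagger}$, and the identifications $\mathrm K(\overline{\mathcal K},\mathcal H)\cong\mathcal H_{\mathrm C}\otimes_{\mathrm h}\mathcal K_{\mathrm R}$ and $\mathrm{TC}(\overline{\mathcal K},\mathcal H)\cong\mathcal H_{\mathrm R}\otimes_{\mathrm h}\mathcal K_{\mathrm C}$ recorded in the preceding remark. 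The essential input is the standard fact that, for C*-algebras $C\subset\mathrm B(\mathcal H)$ and $D\subset\mathrm B(\mathcal L)$, the assignment $c\otimes d\mapsto[T\mapsto cTd]$ extends to a complete isometry of $C\otimes_{\mathrm h}D$ onto the norm closure of the elementary operators inside $\mathrm{CB}(\mathrm K(\mathcal L,\mathcal H))$; the completely contractive direction is immediate from the functoriality of $\otimes_{\mathrm h}$ applied to the completely contractive module actions of $\mathrm B(\mathcal H)$ on $\mathcal H_{\mathrm C}$ and of $\mathrm B(\mathcal L)$ on $\mathcal L_{\mathrm R}$, while the isometric (bounded below) direction is the content of the Haagerup-norm computation for elementary operators (see \cite{er,pisier}).

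For (1), under the identification $\mathrm K(\overline{\mathcal K},\mathcal H)\cong\mathcal H_{\mathrm C}\otimes_{\mathrm h}\mathcal K_{\mathrm R}$ the operator $\Pi_\infty(a\otimes b)$ is exactly the elementary operator $T\mapsto aTb^{\dagger}$ with $a\in A\subset\mathrm B(\mathcal H)$ multiplying on the left and $b^{\dagger}\in B^{\mathrm{op}}\subset\mathrm B(\overline{\mathcal K})$ on the right. Applying the multiplication theorem above with $C=\mathrm B(\mathcal H)$ and $D=\mathrm B(\overline{\mathcal K})$ gives a complete isometry $\mathrm B(\mathcal H)\otimes_{\mathrm h}\mathrm B(\overline{\mathcal K})\to\mathrm{CB}(\mathrm K(\overline{\mathcal K},\mathcal H))$, and since the Haagerup tensor product is injective this restricts to a complete isometry $A\otimes_{\mathrm h}B^{\mathrm{op}}\to\mathrm{CB}(\mathrm K(\overline{\mathcal K},\mathcal H))$ carrying $a\otimes b^{\mathrm{op}}$ to $\Pi_\infty(a\otimes b)$. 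As $A\odot B^{\mathrm{op}}$ is dense in $A\otimes_{\mathrm h}B^{\mathrm{op}}$, the (closed) image of this complete isometry is precisely the closure $A\otimes_\infty^{\mathrm c}B$ of $\Pi_\infty(A\odot B)$, and its inverse is $J_\infty$, which is therefore a complete isometry.

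For (2) the same scheme is carried out on $\mathrm{TC}(\overline{\mathcal K},\mathcal H)\cong\mathcal H_{\mathrm R}\otimes_{\mathrm h}\mathcal K_{\mathrm C}$, where $\Pi_1(a\otimes b)$ is again $T\mapsto aTb^{\dagger}$, but now the $A$-action takes place on a row space and the $B$-action on a column space -- the reverse of the situation in (1). The cleanest route is to pass to operator space duals: $\mathrm{TC}(\overline{\mathcal K},\mathcal H)$ is the operator space dual of $\mathrm K(\overline{\mathcal K},\mathcal H)$ under the transpose pairing, and since $\|u\|_{\mathrm{cb}}=\|u^{\dagger}\|_{\mathrm{cb}}$ the cb-norm of $\Pi_1(a\otimes b)$ and of all of its matrix amplifications are computed from the Banach adjoint, which is once more an elementary operator on $\mathrm K(\overline{\mathcal K},\mathcal H)$ to which the multiplication theorem applies. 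Tracking the transposes, the conjugate Hilbert spaces, and the attendant passage to opposite algebras through this duality interchanges the roles of the two tensorands, so that the resulting complete isometry lands in $B^{\mathrm{op}}\otimes_{\mathrm h}A$ rather than $A\otimes_{\mathrm h}B^{\mathrm{op}}$, with $b^{\mathrm{op}}\otimes a$ corresponding to $\Pi_1(a\otimes b)$; restricting via injectivity of $\otimes_{\mathrm h}$ then identifies $J_1$ as a complete isometry. I expect the reductions via injectivity and the contractive estimates to be routine; the main obstacle is the isometric direction of the multiplication theorem together with the careful operator-space bookkeeping in (2) that produces the \emph{reversed} order $B^{\mathrm{op}}\otimes_{\mathrm h}A$, which is a genuinely different operator space from $A\otimes_{\mathrm h}B^{\mathrm{op}}$ since $\otimes_{\mathrm h}$ is not symmetric.
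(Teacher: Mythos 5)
Your proposal is correct and follows essentially the same route as the paper: both arguments rest on the Blecher--Smith identification of completely bounded elementary operators on $\mathrm K(\overline{\mathcal K},\mathcal H)$ and $\mathrm{TC}(\overline{\mathcal K},\mathcal H)$ with (weak*) Haagerup tensor products of $\mathrm B(\mathcal H)$ and $\mathrm B(\overline{\mathcal K})$, combined with injectivity of $\otimes_{\mathrm h}$ to pass to the subalgebras $A$ and $B^{\mathrm{op}}$. The only real difference is that for the trace-class case the paper cites \cite[Corollary 2.3]{BS} directly (with the tensorands already appearing in the reversed order $\mathrm B(\overline{\mathcal K})\otimes_{\mathrm w^*\mathrm h}\mathrm B(\mathcal H)$), whereas you re-derive that identification by dualizing the compact case --- which is precisely the mechanism behind that corollary, so this is an inlining of the cited result rather than a different proof.
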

\begin{proof}By \cite[Proposition 2.1 and Corollary 2.3]{BS},
	$$\mr{CB}(\mr K(\overline{\mc K},\mc H),\mr B(\overline{\mc K},\mc H))\cong \mr B(\mc H)\otimes_{\mr w^*\mr h} \mr B(\overline{\mc K})$$
	completely isometrically and
	$$\mr{CB}(\mr{TC}(\overline{\mc K},\mc H))\cong \mr B(\overline{\mc K})\otimes_{\mr w^*\mr h} \mr B(\mc H)$$
	completely isometrically. Tracing through the relevant maps shows that $J_\infty$ and $J_1$ are completely isometric since the (weak*) Haagerup tensor product is injective.
\end{proof}
\begin{rem}\label{rem:oss}
	If $B$ is any C*-algebra, then Proposition \ref{prop:p=1 or p=infty} implies we have completely isometric identifications $\C\otimes_1 B\cong\C\otimes_{\infty} B\cong B^{\mr{op}}$.
	Since the operator space structure on $B^{\mr{op}}$ can differ significantly from that on $B$ (e.g., if $\mc H$ is an infinite dimensional Hilbert space, then the maps $\mr K(\mc H)\ni a\mapsto a^{\op} \in\mr K(\mc H)^{\mr{op}}$ and $\mr K(\mc H)^{\mr{op}}\ni a^{\op}\mapsto a \in \mr K(\mc H)$ fail to be completely bounded), the operator space structure of $A\opc B$ will be of lesser interest to us than the norm structure.
\end{rem}}
{In the case when $p=2$, our construction coincides isometrically (but not necessarily completely isometrically) with the minimal tensor product.}
{\begin{prop}\label{prop:p=2}
	If $A\subset \mr B(\mc H)$ and $B\subset \mr B(\mc K)$ are concrete C*-algebras, then
	$$ A\otimes_2^{\mr c} B\cong A\otimes_{\min}B$$
	isometrically.
\end{prop}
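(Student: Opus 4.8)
The plan is to identify the representation $\Pi_2$ with the standard minimal-tensor-product representation of $A \odot B$, using the fact that $\HCp \otimes_{\mr h} \KRp \cong \Sp(\overline{\mc K}, \mc H)$ when $p=2$ becomes the Hilbert-space tensor product. Concretely, for $p=2$ we have $\mc H_{\mr C_2} = \mc H_{\oh}$ and $\mc K_{\mr R_2} = \overline{\mc K}_{\oh}$ (as operator spaces), but as \emph{Banach} spaces both are just the underlying Hilbert spaces $\mc H$ and $\overline{\mc K}$. The key structural fact, from Equation \eqref{Eq:Sp-identification}, is that $\HCp \otimes_{\mr h} \KRp \cong \mr S_2(\overline{\mc K}, \mc H)$, and $\mr S_2(\overline{\mc K}, \mc H)$ is the Hilbert–Schmidt space, which is itself a Hilbert space isometrically isomorphic to $\mc H \otimes \overline{\mc K}$ (the Hilbertian tensor product). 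First I would make this identification explicit: under \eqref{eq:action-on-Sp}, the action $\Pi_2(a \otimes b)T = a T b^\dagger$ on $\mr S_2(\overline{\mc K}, \mc H)$ translates, via the Hilbert–Schmidt $=$ Hilbertian tensor isomorphism and the identification $b^\dagger \leftrightsquigarrow b^{\op}$ on $\overline{\mc K}$, into the operator $a \otimes b^{\op}$ acting on $\mc H \otimes \overline{\mc K}$ by $\xi \otimes \overline{\eta} \mapsto a\xi \otimes \overline{b^* \eta} = a\xi \otimes b^\dagger \overline{\eta}$.

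Next I would invoke the standard fact that the opposite algebra $B^{\op} \subset \mr B(\overline{\mc K})$, via the embedding $b^{\op} \leftrightsquigarrow b^\dagger$ described in the Opposite C*-algebras subsection, carries its canonical C*-structure, so that the map $b \mapsto b^{\op}$ is an (isometric, indeed completely isometric) $*$-isomorphism $B \cong B^{\op}$ onto a concrete C*-algebra. Consequently, the representation $\Pi_2$ is unitarily equivalent to the tensor-product representation of $A \odot B^{\op}$ on $\mc H \otimes \overline{\mc K}$ sending $a \otimes b$ to $a \otimes b^\dagger$. Since this is a genuine Hilbert-space representation of the form $\pi_A \otimes \pi_{B^{\op}}$ for faithful $*$-representations $\pi_A$ of $A$ and $\pi_{B^{\op}}$ of $B^{\op}$, its norm closure is by definition an isometric copy of a C*-tensor-product completion, and the Banach-space norm it induces on $A \odot B$ is therefore a genuine C*-norm.

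To pin down that this C*-norm is the \emph{minimal} one, I would use the universal property of $\otimes_{\min}$: the minimal C*-norm is the smallest C*-norm on $A \odot B$, realized precisely as the norm of the representation $\pi_A \otimes \pi_B$ on $\mc H \otimes \mc K$ for any pair of faithful representations, and it is well known to be independent of the chosen faithful representations. The representation above is exactly such a tensor product of faithful representations (with $B$ replaced by $B^{\op}$ and $\mc K$ by $\overline{\mc K}$, which does not change the resulting minimal norm since $B^{\op} \otimes_{\min} A \cong A \otimes_{\min} B$ canonically and $\overline{\mc K}$ is unitarily the conjugate Hilbert space). Hence the norm on $A \otimes_2^{\mr c} B$ agrees isometrically with $\|\cdot\|_{\min}$, giving the claimed isometric identification $A \otimes_2^{\mr c} B \cong A \otimes_{\min} B$.

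The main obstacle I anticipate is bookkeeping the conjugations correctly: the passage through $\overline{\mc K}$, the identification of the Hilbert–Schmidt class with the Hilbertian tensor product $\mc H \otimes \overline{\mc K}$, and the translation of the right action $T \mapsto T b^\dagger$ into a left action of $b^{\op}$ must all be matched up so that the resulting operator is honestly $a \otimes b^{\op}$ and not an adjoint or transpose of it. Once these identifications are aligned, the reduction to the standard minimal-tensor-product representation is routine, and the only reason the isomorphism is merely \emph{isometric} rather than completely isometric is that the operator-Hilbert-space structure $\mc H_{\oh}$ differs from the column/row structure underlying the minimal tensor product's operator space — a point the statement explicitly flags and which I would not need to resolve.
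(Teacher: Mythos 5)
Your overall strategy --- identify the Banach space $\mc H_{\mr C_2}\otimes_{\mr h}\mc K_{\mr R_2}$ with the Hilbert--Schmidt class, recognize $\Pi_2$ as a Hilbert-space tensor-product representation, and invoke the independence of the minimal norm from the choice of faithful representations --- is the right one, and is close in spirit to the paper's proof (which phrases it as $\mc H_{\mr{oh}}\otimes_{\mr h}\mc K_{\mr{oh}}\cong(\mc H\otimes\mc K)_{\mr{oh}}$ plus homogeneity of operator Hilbert spaces). But your execution of the identification contains a genuine error, and the patch you apply to excuse it is false. The canonical \emph{linear} identification of $\mr S_2(\overline{\mc K},\mc H)$ with a Hilbertian tensor product is with $\mc H\otimes\mc K$, via Equation \eqref{Eqn:Hilbert-space-identification}, $\xi\otimes\eta\leftrightsquigarrow(\cdot,\overline{\eta})\xi$; there is no linear identification with $\mc H\otimes\overline{\mc K}$ intertwining the actions, because $\mathrm{id}_{\mc H}\otimes J$, with $J\colon\mc K\to\overline{\mc K}$ the conjugation, is not well defined on a tensor product ($J$ is conjugate-linear). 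Under the correct identification one checks $aT_{\xi,\eta}b^\dagger=T_{a\xi,b\eta}$, so $\Pi_2(a\otimes b)$ acts as $a\otimes b$ on $\mc H\otimes\mc K$ --- no opposite algebra and no $b^\dagger$ appears at all --- and the proof finishes immediately. Your version, $a\otimes b^{\op}$ acting on $\mc H\otimes\overline{\mc K}$, computes a genuinely different norm.

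Concretely, take $A=B=M_n$ and $x=\sum_{i,j}e_{ij}\otimes e_{ij}$. Then $\|x\|_{\min}=n$, and a direct computation gives $\Pi_2(x)T=\mathrm{Tr}(T)\,I$ for $T\in\mr S_2$, whose operator norm on $\mr S_2$ is $n$, consistent with the proposition. But your claimed realization of $\Pi_2(x)$ is $\sum_{i,j}e_{ij}\otimes e_{ij}^\dagger=\sum_{i,j}e_{ij}\otimes e_{ji}$, the swap unitary on $\C^n\otimes\C^n$, which has norm $1$; so no unitary can implement the equivalence you assert. This is also exactly where your patch breaks down: $b\mapsto b^{\op}$ is a $*$-\emph{anti}-isomorphism, not a $*$-isomorphism, and the induced map $a\otimes b\mapsto a\otimes b^{\op}$ on the minimal tensor product is $\mathrm{id}\otimes\mathrm{transpose}$, which is \emph{not} isometric --- the same example sends an element of norm $n$ to one of norm $1$. (Isometry here would require the transpose to be completely isometric, which it is not; compare Remark \ref{rem:oss}.) Only flipping \emph{both} legs, $(A\otimes_{\min}B)^{\op}\cong A^{\op}\otimes_{\min}B^{\op}$, preserves the minimal norm; flipping one leg does not, so there is no canonical isometric identification $A\otimes_{\min}B^{\op}\cong A\otimes_{\min}B$ of the kind you invoke. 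Finally, even after the identification is corrected, you still owe an argument that the operator norm you compute equals the cb-norm used to define $A\otimes_2^{\mr c}B\subset\mr{CB}(\mc H_{\mr C_2}\otimes_{\mr h}\mc K_{\mr R_2})$; this is precisely where the paper uses that operator Hilbert spaces are homogeneous.
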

\begin{proof} {Recall that $\mc H_{\mr C_2} \cong \mc H_{\mr R_2} \cong \mc H_{\mr{oh}}$ and $\mc H_{\mr{oh}}\otimes_{\mr h}\mc K_{\mr{oh}}\cong(\mc H\otimes \mc K)_{\mr{oh}}$ completely isometrically (see \cite[Corollary 7.3]{pisier}). Since operator Hilbert spaces are homogeneous we get the conclusion we wanted.}
\end{proof}
The following simple observations will occasionally be used to shorten proofs within this paper.
\begin{lem}\label{lem:easier}
	Let $A$ and $B$ be C*-algebras. For $*$-representations $\pi\colon A\to \mr B(\mc H)$ and $\sigma\colon B\to \mr B(\mc K)$ we consider their amplifications $\tilde{\pi}\colon A\to \mr B(\mc H \otimes \ell^2),\; a\mapsto \pi(a) \otimes I_{\ell^2}$ and $\tilde{\sigma}\colon B\to \mr B(\mc K \otimes \ell^2),\; b\mapsto \sigma(b) \otimes I_{\ell^2}$. Let
	$\Pi_p\colon A\odot B\to \mr{CB}(S_p(\overline{\mc K},\mc H))$ and $\Pi_p'\colon A\odot B\to \mr {CB}(S_p(\overline{\mc K \otimes \ell^2},\mc H \otimes \ell^2))$ be the canonical representations associated to $(\pi(A), \sigma(B))$ and $(\tilde{\pi}(A), \tilde{\sigma}(B))$, respectively, as in Definition \ref{defn_rep}. Then $\|\Pi_p(x)\|_{\mr{cb}}=\|\Pi_p'(x)\|_{\op}$ for every $x\in A\odot B$.
\end{lem}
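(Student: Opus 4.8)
The plan is to recognize the defining norm $\|\Pi_p(x)\|_{\mr{cb}}$ of $A\opc B$ as an honest operator norm after amplification, by combining two structural facts about Pisier's vector-valued Schatten classes. Throughout I write $E=\mr S_p(\overline{\mc K},\mc H)\cong\HCp\otimes_{\mr h}\KRp$, the operator space on which $\Pi_p(x)$ acts, and I use that, by definition, $\|\Pi_p(x)\|_{\mr{cb}}=\sup_n\|\mr{id}_{M_n}\otimes\Pi_p(x)\|_{\mr{op}}$, i.e. the supremum of the ``$\mr S_\infty^n$-type'' amplifications $M_n(E)=\mr S_\infty^n(E)$.

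First I would identify the amplified representation $\Pi_p'$ with an $\mr S_p$-valued amplification of $\Pi_p$. By the noncommutative Fubini theorem for Schatten classes there is a canonical completely isometric identification
$$\mr S_p(\overline{\mc K\otimes\ell^2},\mc H\otimes\ell^2)\;\cong\;\mr S_p\big(\mr S_p(\overline{\mc K},\mc H)\big)=\mr S_p(E),$$
where the outer $\mr S_p=\mr S_p(\ell^2)$ absorbs the extra $\ell^2\otimes\ell^2$ amplification. Under this identification $\pi(a)\otimes I_{\ell^2}$ acts only on the inner variable $\mc H$ and $\sigma(b)\otimes I_{\ell^2}$ only on the inner variable $\mc K$, so $\Pi_p'(a\otimes b)$ becomes $\mr{id}_{\mr S_p}\otimes\Pi_p(a\otimes b)$; hence $\Pi_p'(x)=\mr{id}_{\mr S_p}\otimes\Pi_p(x)$ on $\mr S_p(E)$ for every $x\in A\odot B$. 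Matching the two actions is a routine computation once the Fubini identification is in place, and it gives $\|\Pi_p'(x)\|_{\mr{op}}=\|\mr{id}_{\mr S_p}\otimes\Pi_p(x)\|_{\mr S_p(E)\to\mr S_p(E)}$.

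Second, I would invoke the fundamental property that the spaces $\mr S_p(\cdot)$ compute completely bounded norms: for all operator spaces $E,F$, each $1\le p\le\infty$, and each $u\in\mr{CB}(E,F)$, the map $\mr{id}_{\mr S_p}\otimes u$ extends to $\mr S_p(E)\to\mr S_p(F)$ with $\|\mr{id}_{\mr S_p}\otimes u\|_{\mr{op}}=\|u\|_{\mr{cb}}$. Applying this with $u=\Pi_p(x)$ and combining with the first step yields $\|\Pi_p(x)\|_{\mr{cb}}=\|\Pi_p'(x)\|_{\mr{op}}$, which is the assertion. The inequality $\|\mr{id}_{\mr S_p}\otimes u\|_{\mr{op}}\le\|u\|_{\mr{cb}}$ is the routine half (it follows from $\|\mr{id}_{\mr S_p}\otimes u\|_{\mr{cb}}=\|u\|_{\mr{cb}}$ together with $\|\cdot\|_{\mr{op}}\le\|\cdot\|_{\mr{cb}}$); the substantive content is the reverse inequality, namely that a single $\mr S_p$-amplification already saturates the cb-norm.

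The hard part, therefore, is pinning down this reverse inequality in exactly the form needed, the difficulty being that the matrix levels $M_n(E)=\mr S_\infty^n(E)$ governing the cb-norm and the Schatten amplifications $\mr S_p^n(E)$ produced by $\Pi_p'$ are genuinely \emph{different} operator spaces---already for $E=\C$, where $M_n(\C)=\mr S_\infty^n$ while $\mr S_p^n(\C)=\mr S_p^n$---so no naive isometric comparison is available. I would either cite Pisier's treatment of vector-valued $\mr S_p$ for this equality outright, or, as a self-contained alternative, verify the lemma first at the endpoints $p\in\{1,\infty\}$, where Proposition \ref{prop:p=1 or p=infty} realizes both $\|\Pi_p(x)\|_{\mr{cb}}$ and $\|\Pi_p'(x)\|_{\mr{op}}$ as operator norms attached to a Haagerup tensor product (for which amplification is classically known to compute the norm), using the interpolation-compatibility of the Haagerup tensor product from the preliminaries to organize the intermediate $p$. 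In either case the crux is the same: the purely one-sided multiplication action of $A$ and $B$ on the homogeneous factors $\HCp$ and $\KRp$ cannot distinguish the $\infty$-type from the $p$-type decoration of the auxiliary $\ell^2$, so both amplification schemes stabilize to a common value.
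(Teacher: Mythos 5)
Your proposal is correct and takes essentially the same route as the paper's proof: the paper likewise combines the canonical complete isometry $\mr S_p\big(\mr S_p(\overline{\mc K},\mc H)\big)\cong \mr S_p(\overline{\mc K\otimes\ell^2},\mc H\otimes\ell^2)$ (obtained from $\mr S_p(X)\cong C_p\otimes_{\mr h}X\otimes_{\mr h}R_p$), under which $\Pi_p'$ becomes $I_{S_p}\otimes\Pi_p$, with Pisier's theorem that $\|T\|_{\mr{cb}}=\|I_{S_p}\otimes T\colon \mr S_p(X)\to \mr S_p(X)\|_{\mr{op}}$. The ``hard part'' you flag is exactly what the paper disposes of by citing Pisier's vector-valued Schatten class theory, so your primary route (rather than the endpoint-plus-interpolation fallback) is precisely the paper's argument.
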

\begin{proof}
		{Recall that for an operator space $X$ and $T\in \mr CB(X)$ we have
		$$\|T\|_{\mr cb} = \|I_{S_p} \otimes T:S_p(X) \to S_p(X)\|_{\op}.$$
		See \cite[Theorem 9.2.3]{pisier} for example. Here, we are using the convention $S_p = S_p(\overline{\ell^2}, \ell^2)$. Since we have $S_p(X) \cong \mr C_p \otimes_{\mr h} X \otimes_{\mr h} \mr R_p$ completely isometrically in a canonical way we can easily see that $S_p(S_p(\overline{\mc K},\mc H)) \cong S_p(\overline{\mc K \otimes \ell^2},\mc H \otimes \ell^2)$ completely isometrically also in a canonical way. This gives us the wanted conclusion.}
\end{proof}}
	
	{We can define the analog of our tensor product $\otimes_p^{\rm c}$ for arbitrary C*-algebras by considering faithful embeddings. For simplicity, we will only consider the Banach algebraic structure of this constructions, although the analogously defined operator space structure is of independent interest.	
\begin{defn}\label{D:independent}
	Let $A$ and $B$ be C*-algebras with faithful $*$-representations $\pi\colon A\to \tn B(\mc H)$ and $\sigma\colon B\to \tn B(\mc K)$. We will let $A\otimes_p B$ denote the Banach algebraic completion of $A\odot B$ that one obtains by considering $A\odot B$ as a subset of $\pi(A)\otimes_p^{\rm c} \sigma(B)$.
\end{defn}
One should be concerned about whether this construction depends on the choice of embeddings. The authors are grateful to N. Ozawa, L. Turowska and an anonymous referee who independently suggested the following argument that shows it does not. This answers a question asked in an earlier version of this paper.
	\begin{thm}\label{thm:independence}
		Let $A$ and $B$ be C*-algebras and $1\leq p\leq \infty$. If $\pi_i\colon A\to \tn B(\mc H_i)$ and $\sigma_i\colon B\to \tn B(\mc K_i)$ are non-degenerate, injective $*$-representations ($i=1,2$), then
		$$ \pi_1(A)\otimes_{p}^{\tn c}\sigma_1(B)\cong \pi_2(A)\otimes_{p}^{\tn c}\sigma_2(B)$$
		isometrically.
	\end{thm}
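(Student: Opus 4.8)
The plan is to show that the norm which a pair $(\pi,\sigma)$ of faithful non-degenerate representations induces on $A\odot B$, namely $x\mapsto \|\Pi_p(x)\|_{\mr{cb}}$ with $\Pi_p$ as in Definition \ref{defn_rep}, depends only on the quasi-equivalence classes of $\pi$ and $\sigma$. Since any two faithful non-degenerate representations of a C*-algebra are quasi-equivalent, this immediately yields the theorem. I would first reduce to changing one variable at a time: it suffices to fix a faithful non-degenerate $\sigma\colon B\to \mr B(\mc K)$ and prove that $\|\Pi_p^{(\pi_1,\sigma)}(x)\|_{\mr{cb}}=\|\Pi_p^{(\pi_2,\sigma)}(x)\|_{\mr{cb}}$ for faithful non-degenerate $\pi_1,\pi_2\colon A\to\mr B(\mc H_i)$; applying this once in each variable gives the general statement.

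Two ingredients drive the argument. The first is \emph{amplification invariance}: for any multiplicity space $\ell^2(I)$, the representation $\pi\otimes 1_{\ell^2(I)}$ on $\mc H\otimes \ell^2(I)$ induces the same cb-norm as $\pi$. This is essentially the content of Lemma \ref{lem:easier}. Indeed, amplifying $\pi$ replaces the module $\mc H_{\mr C_p}\otimes_{\mr h}\mc K_{\mr R_p}\cong \mr S_p(\overline{\mc K},\mc H)$ by $(\mc H\otimes\ell^2(I))_{\mr C_p}\otimes_{\mr h}\mc K_{\mr R_p}\cong \mc H_{\mr C_p}\otimes_{\mr h}\ell^2(I)_{\mr C_p}\otimes_{\mr h}\mc K_{\mr R_p}$, on which the amplified operator is $\Pi_p(x)$ with an identity inserted on the homogeneous Hilbertian factor $\ell^2(I)_{\mr C_p}$; inserting such an identity into the middle of a Haagerup tensor product does not change the cb-norm, which is exactly the mechanism used in the proof of Lemma \ref{lem:easier} in the case $I=\N$.

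The second ingredient is the von Neumann algebraic fact that faithful non-degenerate representations have unitarily equivalent amplifications. I would extend $\pi_1,\pi_2$ to normal representations of the enveloping von Neumann algebra $A^{**}$; because each $\pi_i$ is faithful on $A$ and the $\mr w^*$-closed ideals of $A^{**}$ are precisely the $\mr w^*$-closures of the closed ideals of $A$, the central cover of each extension is $1$. Normal representations of a von Neumann algebra with central cover $1$ are quasi-equivalent, and quasi-equivalent representations have unitarily equivalent amplifications: there is a cardinal $I$ and a unitary $u\colon \mc H_2\otimes\ell^2(I)\to\mc H_1\otimes\ell^2(I)$ with $u(\pi_2(a)\otimes 1)u^*=\pi_1(a)\otimes 1$ for all $a\in A$.

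Finally, unitary equivalence transfers to equality of cb-norms: using the description \eqref{eq:action-on-Sp}, left multiplication $T\mapsto uT$ is a complete isometry of $\mr S_p(\overline{\mc K},\mc H_2\otimes\ell^2(I))$ onto $\mr S_p(\overline{\mc K},\mc H_1\otimes\ell^2(I))$ that intertwines the two representations, so the corresponding operators have equal cb-norm. Chaining the equalities $\|\Pi_p^{(\pi_1,\sigma)}(x)\|_{\mr{cb}}=\|\Pi_p^{(\pi_1\otimes 1_I,\sigma)}(x)\|_{\mr{cb}}=\|\Pi_p^{(\pi_2\otimes 1_I,\sigma)}(x)\|_{\mr{cb}}=\|\Pi_p^{(\pi_2,\sigma)}(x)\|_{\mr{cb}}$ and repeating the argument in the $\sigma$-variable completes the proof. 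I expect the main technical point to be the amplification invariance of the cb-norm, that is, justifying that inserting an identity on an infinite multiplicity space into the middle of the Haagerup tensor product leaves the cb-norm unchanged; the quasi-equivalence step is standard von Neumann algebra theory.
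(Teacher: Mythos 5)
There is a genuine gap, and it sits at the load-bearing step of your argument: the claim that any two faithful non-degenerate representations of a C*-algebra are quasi-equivalent is false. Take $A=C[0,1]$, let $\pi_1$ be the multiplication representation on $L^2([0,1],\mathrm{Leb})$ and $\pi_2$ the multiplication representation on $\ell^2(D)$ for a countable dense subset $D\subset [0,1]$ (i.e., $L^2$ of an atomic measure with dense support). Both are faithful and non-degenerate, but they are not quasi-equivalent: a unitary equivalence of amplifications would extend the identity map on $C[0,1]$ to a $*$-isomorphism $L^\infty([0,1],\mathrm{Leb})\to \ell^\infty(D)$ intertwining the two inclusions of $C[0,1]$, forcing two mutually singular measures to be equivalent. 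Correspondingly, your von Neumann algebraic justification breaks down at the ideal-theoretic claim: it is \emph{not} true that the $\mathrm{w}^*$-closed ideals of $A^{**}$ are precisely the $\mathrm{w}^*$-closures of closed ideals of $A$ ($\mathrm{w}^*$-closed ideals correspond to arbitrary central projections of $A^{**}$, of which there are in general far more), so faithfulness of $\pi_i$ on $A$ does not force the normal extension to $A^{**}$ to have central cover $1$; in the example above the kernel of the normal extension of $\pi_1$ is a nonzero $\mathrm{w}^*$-closed ideal meeting $A$ only in $\{0\}$. Since the quasi-equivalence statement itself is false, no repair of the central-cover computation can save this route: exact unitary equivalence of amplifications is simply too strong to hope for.

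What is true, and what the paper's proof uses instead, is the weaker statement that after infinite amplification two faithful non-degenerate representations of a \emph{separable} C*-algebra on \emph{separable} Hilbert spaces are \emph{approximately} unitarily equivalent (a corollary of Voiculescu's theorem, applicable because infinite amplifications contain no nonzero compact operators in their ranges, and both representations have kernel $\{0\}$). Approximate unitary equivalence suffices for the norm equality: one obtains unitaries $u_n$, $v_n$ with $\|\pi_1(a)-u_n^*\pi_2(a)u_n\|\to 0$ and $\|\sigma_1(b)-v_n^*\sigma_2(b)v_n\|\to 0$, exact conjugation by $u_n\otimes v_n$ preserves the norm of $\Pi_p$ (this is your intertwining step, which is fine), and one passes to the limit in norm on each element of $A\odot B$. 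Your other ingredients --- the one-variable-at-a-time reduction and the amplification invariance of the cb-norm via Lemma \ref{lem:easier} --- are correct and also appear in the paper's argument, but you would additionally need the paper's two further reduction steps (decomposing non-separable Hilbert spaces into separable invariant pieces, and passing to separable C*-subalgebras) since Voiculescu's theorem requires separability throughout.
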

	\begin{proof}
		We will first assume that $A,B,\mc H_1,\mc H_2, \mc K_1,\mc K_2$ are each separable. We will also replace $\pi_1$ and $\pi_2$ with their infinite amplifications, which allow us to consider operator norms in place of cb norms and invoke Voiculescu's theorem implying that there exist unitaries $\{u_n\}\subset \tn B(\mc H_1,\mc H_2)$, $\{v_n\}\subset\tn B(\mc K_1,\mc K_2)$ so that
		$$\|\pi_1(a)-u_n^*\pi_2(a)u_n\|\to 0\quad\text{and}\quad\|\sigma_1(b)-v_n^*\sigma_2(b)v_n\|\to 0 \quad \text{as}\quad n\to \infty$$
		for all $a\in A$, $b\in B$. Let $\sum_{i=1}^k a_i\otimes b_i\in A\odot B$. Then
		$$\left\|\sum_{i=1}^k u_n^*\pi_2(a_i)u_n\otimes v_n^*\sigma_2(b_i)v_n\right\|_{\op}=\left\|\sum_{i=1}^k \pi_2(a_i)\otimes \sigma_2(b_i)\right\|_{\op}$$
		for each $n\in \N$. Since
		$$\sum_{i=1}^k u_n^*\pi_2(a_i)u_n\otimes v_n^*\sigma_2(b_i)v_n\to\sum_{i=1}^k \pi_1(a_i)\otimes \sigma_1(b_i)\quad \text{as}\quad n\to \infty$$
		in norm, we deduce that
		$$\left\|\sum_{i=1}^k \pi_1(a_i)\otimes \sigma_1(b_i)\right\|_{\op}=\left\|\sum_{i=1}^k \pi_2(a_i)\otimes \sigma_2(b_i)\right\|_{\op}.$$
		This gives the desired result under appropriate assumptions of separability.\\		
		\indent We next assume that $A$ and $B$ are separable and $\pi\colon A\to\rm B(\mc H)$ and $\sigma\colon B\to \tn B(\mc K)$ are faithful representations where $\mc H$ and $\mc K$ are not necessarily separable. Choose decompositions $\mc H=\bigoplus_{i\in I} \mc H_i$ and $\mc K=\bigoplus_{j\in J} \mc K_j$ where each $\mc H_i$ is a separable $A$-invariant subspace of $\mc H$ and each $\mc K_j$ is a separable $B$-invariant subspace of $\mc K$. For each  countable subset $\alpha$ of $I$ and $\beta$ of $J$ (where finite sets are considered countable), let $P_\alpha$ be the orthogonal projection from $\mc H$ onto $\bigoplus_{i\in \alpha} \mc H_i$ and $Q_\beta$ be the orthogonal projection from $\mc K$ onto $\bigoplus_{j\in \beta} \mc K_j$. Then $P_\alpha\otimes Q_\beta$ is a complete contraction belonging
		to $\tn {\rm CB}(\mc H_{{\rm C}_p}\otimes_{\rm h} \mc K_{{\rm R}_q})$ for all $\alpha$ and $\beta$ since $\mc H_{{\rm C}_p}$ and $\mc K_{{\rm R}_q}$ are homogeneous operator spaces. Let $x\in A\odot B$. So
		$$\left\|(P_{\alpha}\otimes Q_\beta)(\pi\otimes\sigma (x))(P_{\alpha}\otimes Q_\beta)\right\|_{\text{cb}}\leq \left\|\pi\otimes\sigma (x)\right\|_{\text{cb}}$$
		for all $\alpha$ and $\beta$. As
		$$(P_{\alpha}\otimes Q_\beta)(\pi\otimes\sigma (x))(P_{\alpha}\otimes Q_\beta)\to\pi\otimes\sigma (x)$$
		in the SOT, we deduce that
		$$\left\|(P_{\alpha}\otimes Q_\beta)(\pi\otimes\sigma (x))(P_{\alpha}\otimes Q_\beta)\right\|_{\text{cb}}\to_{\alpha,\beta}\left\|\pi\otimes\sigma (x)\right\|_{\text{cb}}.$$
		Since $P_{\alpha}\pi(\cdot)P_\alpha$ and $Q_{\beta}\sigma(\cdot)Q_\beta$ are faithful representations of $A$ and $B$ when $\alpha$ and $\beta$ are ``large enough'' countable subsets of $I$ and $J$, we deduce that
		$$\left\|\big[\pi\otimes\sigma (x_{i,j})\big]\right\|_{\text{cb}}=\left\|\big[(P_{\alpha}\otimes Q_\beta)(\pi\otimes\sigma (x_{i,j}))(P_{\alpha}\otimes Q_\beta)\big]\right\|_{\text{cb}}$$
		when $\alpha\subset I$ and $\beta\subset J$ are ``large enough'' countable subsets by the result obtained in the previous paragraph. Hence, the desired result is proved for when $A$ and $B$ are separable but $\mc H_1,\mc H_2,\mc K_1,\mc K_2$ are not necessarily separable.\\
		\indent Now suppose $A$ and $B$ are arbitrary C*-algebras that are not necessarily separable. Let $x=\sum_{i=1}^k a_i\otimes b_i\in A\odot B$. Considering the separable C*-subalgebras of $A$ and $B$ generated by $a_1,\ldots,a_k$ and $b_1,\ldots,b_k$, respectively, allows us to deduce that
		$$\big\|\pi_1\otimes\sigma_1(x)\big\|_{\text{cb}}= \big\|\pi_2\otimes\sigma_2(x)\big\|_{\text{cb}}.$$
	\end{proof}}

We now show the bifunctor $(A,B)\mapsto A{\opt} B$ has nice functorial properties. In particular, we show this bifunctor to be isometrically commutative, functorial with respect to completely contractive maps, and injective.
{\begin{prop}\label{prop:containment}
	Let $1\leq p\leq \infty$. The mapping $(A,B)\mapsto A\opt B$ is injective in the sense that if $A$ and $B$ are C*-algebras containing C*-subalgebras $A_0$ and $B_0$, then the canonical map $A_0\opt B_0\to A\opt B$ is isometric.
\end{prop}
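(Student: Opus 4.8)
The plan is to exploit Theorem \ref{thm:independence} so that both norms may be computed on a single pair of Hilbert spaces coming from faithful representations of the \emph{ambient} algebras. Fix faithful nondegenerate $*$-representations $\pi\colon A\to \mr B(\mc H)$ and $\sigma\colon B\to \mr B(\mc K)$. Their restrictions $\pi|_{A_0}$ and $\sigma|_{B_0}$ are faithful $*$-representations of $A_0$ and $B_0$, and the crucial observation is that for $x=\sum_i a_i\otimes b_i\in A_0\odot B_0$ the operator $\Pi_p(x)$ acting on $\mr S_p(\overline{\mc K},\mc H)$ by $T\mapsto \sum_i \pi(a_i)\,T\,\sigma(b_i)^\dagger$ is \emph{literally the same operator} whether the $a_i,b_i$ are regarded as elements of $A_0,B_0$ or of $A,B$. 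Hence $\|\Pi_p^{\pi|_{A_0},\sigma|_{B_0}}(x)\|_{\mr{cb}}=\|\Pi_p^{\pi,\sigma}(x)\|_{\mr{cb}}=\|x\|_{A\opt B}$, the last equality holding by Definition \ref{D:independent} together with Theorem \ref{thm:independence} applied to the faithful nondegenerate pair $(\pi,\sigma)$.

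The only genuinely delicate point is that $\pi|_{A_0}$ and $\sigma|_{B_0}$ need not be nondegenerate, so Theorem \ref{thm:independence} does not directly identify $\|\Pi_p^{\pi|_{A_0},\sigma|_{B_0}}(x)\|_{\mr{cb}}$ with $\|x\|_{A_0\opt B_0}$. To bridge this I would pass to the essential subspaces $\mc H_0:=\overline{\pi(A_0)\mc H}$ and $\mc K_0:=\overline{\sigma(B_0)\mc K}$. Since $\pi|_{A_0}$ is a $*$-representation, $\mc H_0$ is reducing and $\pi|_{A_0}=\pi_0\oplus 0$, where $\pi_0$ is the compression to $\mc H_0$; a short argument using an approximate identity $(c_\lambda)$ of $A_0$ (if $\pi_0(a)=0$ then $\pi(ac_\lambda)=0$ for all $\lambda$, whence $\pi(a)=0$ and $a=0$) shows $\pi_0$ is faithful and nondegenerate, and likewise $\sigma_0$ on $\mc K_0$. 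By Theorem \ref{thm:independence} we then have $\|x\|_{A_0\opt B_0}=\|\Pi_p^{\pi_0,\sigma_0}(x)\|_{\mr{cb}}$.

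It remains to verify that discarding the degenerate zero summands does not change the norm, i.e. $\|\Pi_p^{\pi|_{A_0},\sigma|_{B_0}}(x)\|_{\mr{cb}}=\|\Pi_p^{\pi_0,\sigma_0}(x)\|_{\mr{cb}}$; I expect this to be the main obstacle, as it is where the operator-space (not merely Banach-space) structure is essential. In the Schatten realization it becomes transparent: writing $P$ for the orthogonal projection of $\mc H$ onto $\mc H_0$ and $Q$ for that of $\mc K$ onto $\mc K_0$, each $\pi(a_i)$ and $\sigma(b_i)$ factors through these corners, so $\Pi_p^{\pi|_{A_0},\sigma|_{B_0}}(x)$ equals the compression $T\mapsto P\,T\,\overline{Q}$ onto $\mr S_p(\overline{\mc K_0},\mc H_0)$, followed by $\Pi_p^{\pi_0,\sigma_0}(x)$, followed by the inclusion back into $\mr S_p(\overline{\mc K},\mc H)$. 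Because the $\mr C_p$ and $\mr R_p$ spaces are homogeneous, these orthogonal projections are complete contractions (exactly as used in the proof of Theorem \ref{thm:independence}), and the corner $(\mc H_0)_{\mr C_p}\otimes_{\mr h}(\mc K_0)_{\mr R_p}\cong \mr S_p(\overline{\mc K_0},\mc H_0)$ embeds completely isometrically; hence the two cb-norms coincide. Chaining the three equalities yields $\|x\|_{A_0\opt B_0}=\|x\|_{A\opt B}$ for every $x\in A_0\odot B_0$, so the canonical map $A_0\opt B_0\to A\opt B$ is isometric.
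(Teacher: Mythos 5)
Your proof is correct and is essentially the paper's own argument: the paper disposes of this proposition in one line, calling it ``immediate from the definition of $\opt$ and Theorem~\ref{thm:independence},'' and the chain of identifications you spell out (restrict the ambient faithful representations to $A_0$ and $B_0$, observe that $\Pi_p(x)$ is literally the same operator, and invoke Theorem~\ref{thm:independence} on both sides) is exactly what that one line is compressing. Your additional care with possible degeneracy of $\pi|_{A_0}$ and $\sigma|_{B_0}$ --- passing to the essential subspaces and checking via homogeneity of $\mr C_p$, $\mr R_p$ and injectivity of $\otimes_{\mr h}$ that the corner compression does not change the cb-norm --- is precisely the detail the paper leaves implicit, and you handle it correctly.
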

\begin{proof}
	This is immediate from the definition of $\opt$ and Theorem \ref{thm:independence}.
\end{proof}
\begin{prop}\label{prop:commutative}
	Let $A$ and $B$ be C*-algebras and $1\leq p\leq \infty$. The mapping $(A,B)\mapsto A\opt B$ is isometrically commutative in the sense that the tensor flip map $\Sigma\colon A\odot B\to B\odot A$ extends to a surjective isometry $A\opt B\cong B\opt A$.
\end{prop}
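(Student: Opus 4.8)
The plan is to realize the flip $\Sigma$ as conjugation by a transpose map between two Schatten classes. Fix faithful representations $A\subset\mr B(\mc H)$ and $B\subset\mr B(\mc K)$ and use the identification of Equation \eqref{Eq:Sp-identification}, under which the defining representation $\Pi_p^{AB}\colon A\odot B\to\mr{CB}(\Sp)$ acts by $\Pi_p^{AB}(a\otimes b)T=aTb^\dagger$ (Equation \eqref{eq:action-on-Sp}), while the defining representation $\Pi_p^{BA}\colon B\odot A\to\mr{CB}(\mr S_p(\overline{\mc H},\mc K))$ for $B\opt A$ acts by $\Pi_p^{BA}(b\otimes a)S=bSa^\dagger$. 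The candidate intertwiner is the Banach-space transpose
$$\Phi\colon\Sp\to\mr S_p(\overline{\mc H},\mc K),\qquad \Phi(T)=T^\dagger,$$
which is a surjective isometry because transposition preserves singular values, with inverse the transpose $\mr S_p(\overline{\mc H},\mc K)\to\Sp$.

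First I would verify the algebraic intertwining identity on elementary tensors. Using $(XYZ)^\dagger=Z^\dagger Y^\dagger X^\dagger$ together with $(b^\dagger)^\dagger=b$, for $T\in\Sp$ we compute
$$\Phi\big(\Pi_p^{AB}(a\otimes b)T\big)=(aTb^\dagger)^\dagger=bT^\dagger a^\dagger=\Pi_p^{BA}(b\otimes a)\big(\Phi(T)\big),$$
so that $\Phi\circ\Pi_p^{AB}(x)=\Pi_p^{BA}(\Sigma x)\circ\Phi$ for every $x\in A\odot B$. Since $\Phi$ is an isometric isomorphism, conjugation by $\Phi$ preserves the operator norm, giving $\|\Pi_p^{AB}(x)\|_{\op}=\|\Pi_p^{BA}(\Sigma x)\|_{\op}$.

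The one genuine subtlety is that the norm on $\opt$ is the cb-norm inherited from $\opc$ (Definition \ref{D:independent}), not the operator norm, and the transpose $\Phi$ is typically \emph{not} completely bounded (cf.\ Remark \ref{rem:oss}); one therefore cannot simply upgrade the previous paragraph by declaring $\Phi$ a complete isometry and conjugating cb-norms. To circumvent this I would pass to infinite amplifications and invoke Lemma \ref{lem:easier}. Replacing $a\mapsto a\otimes I_{\ell^2}$ on $\mc H\otimes\ell^2$ and $b\mapsto b\otimes I_{\ell^2}$ on $\mc K\otimes\ell^2$, the identical transpose argument applied at the amplified level yields an isometry $\widetilde{\Phi}\colon\mr S_p(\overline{\mc K\otimes\ell^2},\mc H\otimes\ell^2)\to\mr S_p(\overline{\mc H\otimes\ell^2},\mc K\otimes\ell^2)$ intertwining the amplified representations $\widetilde{\Pi}_p^{AB}$ and $\widetilde{\Pi}_p^{BA}\circ\Sigma$, hence $\|\widetilde{\Pi}_p^{AB}(x)\|_{\op}=\|\widetilde{\Pi}_p^{BA}(\Sigma x)\|_{\op}$.

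By Lemma \ref{lem:easier} the left-hand side equals $\|\Pi_p^{AB}(x)\|_{\mr{cb}}=\|x\|_{A\opt B}$ and the right-hand side equals $\|\Pi_p^{BA}(\Sigma x)\|_{\mr{cb}}=\|\Sigma x\|_{B\opt A}$, so $\Sigma$ is isometric on $A\odot B$. As $\Sigma$ carries the dense subspace $A\odot B$ onto the dense subspace $B\odot A$, it extends to a surjective isometry $A\opt B\cong B\opt A$. I expect the only delicate point to be the bookkeeping of the various complex-conjugate Hilbert spaces --- in particular checking that $\Phi^{-1}$ is again a transpose under the canonical identification $\overline{\overline{\mc H}}\cong\mc H$ --- so that conjugation by $\Phi$ is a bona fide isometry of the relevant spaces.
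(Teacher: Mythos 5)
Your proposal is correct and matches the paper's own proof essentially step for step: both realize the flip via the Banach-space transpose $T\mapsto T^\dagger$ between $\mr S_p(\overline{\mc K},\mc H)$ and $\mr S_p(\overline{\mc H},\mc K)$, verify the same intertwining identity $\Phi\circ\Pi_p(a\otimes b)\circ\Phi^{-1}=\widetilde{\Pi}_p(b\otimes a)$, and handle the cb-norm versus operator-norm issue by passing to infinite amplifications via Lemma \ref{lem:easier}. Your explicit remark that the transpose fails to be completely bounded (so one cannot conjugate cb-norms directly) is exactly the reason the paper invokes that lemma as well.
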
}

\begin{proof}
	{It suffices to verify this in the case when $A\subset \tn B(\mc H)$ and $B\subset \tn B(\mc H)$ are concrete C*-algebras. In this case, we will identify $A\opt B$ with $A\opc B$.} {Consider the canonical representations $\Pi_p\colon A\odot B\to \mr B(\HCp\otimes_{\mr h} \KRp)$ and $\widetilde{\Pi}_p\colon B\odot A\to \mr B(\KCp\otimes_{\mr h} \HRp)$, respectively. Applying Lemma \ref{lem:easier}, we may replace $\mc H$ and $\mc K$ with $\mc H\otimes \ell^2$ and $\mc K\otimes \ell^2$ for amplified representations $\Pi'_p$ and $\widetilde{\Pi}'_p$, respectively, and we get $\|\pi'_p(x)\|_{\op}=\|\pi_p(x)\|_{\mr{cb}}$ and $\|\widetilde{\Pi}'_p(y)\|_{\op}=\|\widetilde{\Pi}_p(y)\|_{\mr{cb}}$ for all $x\in A\odot B$ and $y\in B\odot A$. This allows us to focus on comparing operator norms rather than cb-norms.}
	
	Identify $\HCp\otimes_{\mr h} \KRp$ and $\KCp\otimes_{\mr h} \HRp$ with $\mr S_p(\overline{\mc K},\mc H)$ and $\mr S_p(\overline{\mc H},\mc K)$, respectively. The Banach space adjoint maps $\varphi\colon \mr S_p(\overline{\mc K},\mc H)\to \mr S_p(\overline{\mc H},\mc K)$ and $\psi\colon \mr S_p(\overline{\mc H},\mc K)\to\mr S_p(\overline{\mc K},\mc H)$  given by $\varphi(T)=T^\dagger$ and $\psi(S)=S^\dagger$ for $T\in \mr S_p(\overline{\mc K},\mc H)$ and $S\in \mr S_p(\overline{\mc H},\mc K)$ {are onto isometries and, hence, for all $x\in A\odot B$, we have that
	$$\|\Pi_p(x)\|_{\op}=\|\varphi\circ \Pi_p(x)\circ \psi\|_{\op}.$$
	Observe that if $a\in A$, $b\in B$ and $S\in \mr S_p(\overline{\mc H},\mc K)$, then
	\begin{eqnarray*}
	&& \varphi\circ \Pi_p(a\otimes b)\circ \psi(S)\\
	&=& \varphi\circ \Pi_p(a\otimes b) S^\dagger\\
	&=& \varphi(aS^\dagger b^\dagger)\\
	&=& bSa^\dagger\\
	&=& \widetilde{\Pi}_p(b\otimes a)S.
	\end{eqnarray*}}
	It follows that the tensor flip map $\Sigma$ extends to an isometric isomorphism $A{\opt} B\cong B{\opt} A$.
\end{proof}

\begin{rem}
	Let $B$ be any noncommutative {concrete} C*-algebra and $A$ a non-zero {concrete} C*-algebra. Since the map $B\ni b\mapsto b^{\op}\in B^{\op}$ is not completely isometric, we have that $A{\otimes_\infty^{\rm c}} B\cong A\otimes_{\mr h} B^{\op}$ is not canonically completely isometrically isomorphic to $B{\otimes_\infty^{\rm c}} A\cong B\otimes_{\mr h} A^{\op}$. In particular, {the analog of} Proposition \ref{prop:commutative} becomes false when the term ``isometrically'' is replaced with ``completely isometrically'' {when studying the operator space structure of $A\opc B$}.
\end{rem}
{\begin{prop}\label{prop:functorial}
	Let $1\leq p\leq \infty$. If $A$, $A'$, $B$, $B'$ are C*-algebras, and $T\colon A\to A'$ and $S\colon B\to B'$ are complete contractions, then the map $T\otimes S\colon A\odot B\to A'\odot B'$ extends to a contraction
	$$T\otimes S\colon A\opt B\to A'\opt B'.$$
\end{prop}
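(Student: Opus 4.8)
The plan is to reduce the statement to the factorization theory of completely bounded maps, together with the observation (a consequence of Theorem \ref{thm:independence}) that the norm on $A\otimes_p B$ dominates $\|\Pi_p^{\pi,\sigma}(\,\cdot\,)\|_{\mathrm{cb}}$ for \emph{every} pair of (not necessarily faithful) $*$-representations $\pi$ of $A$ and $\sigma$ of $B$. Fix faithful concrete representations $A'\subset\mathrm B(\mathcal H')$ and $B'\subset\mathrm B(\mathcal K')$, so that for $x=\sum_i a_i\otimes b_i\in A\odot B$ the quantity $\|(T\otimes S)(x)\|_{A'\otimes_p B'}$ equals $\|\Phi\|_{\mathrm{cb}}$, where $\Phi\in\mathrm{CB}(\mathrm S_p(\overline{\mathcal K'},\mathcal H'))$ is the operator $Z\mapsto\sum_i T(a_i)\,Z\,S(b_i)^\dagger$ coming from Equation \eqref{eq:action-on-Sp}. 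The goal is to bound $\|\Phi\|_{\mathrm{cb}}$ by $\|x\|_{A\otimes_p B}$.

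First I would apply the Wittstock--Paulsen factorization to the complete contractions $T$ and $S$, writing $T(a)=V^*\pi(a)W$ and $S(b)=X^*\sigma(b)Y$, where $\pi\colon A\to\mathrm B(\mathcal L)$ and $\sigma\colon B\to\mathrm B(\mathcal M)$ are $*$-representations (which we may take non-degenerate, after compressing $V,W,X,Y$ to the essential subspaces) and $V,W\colon\mathcal H'\to\mathcal L$, $X,Y\colon\mathcal K'\to\mathcal M$ are contractions. Using that $\dagger$ is anti-multiplicative and that $\|U^\dagger\|=\|U\|$, a direct computation gives, for $Z\in\mathrm S_p(\overline{\mathcal K'},\mathcal H')$,
\[
\Phi(Z)=V^*\Big[\Pi_p^{\pi,\sigma}(x)\big(W Z Y^\dagger\big)\Big](X^*)^\dagger,
\]
so that $\Phi=\alpha\circ\Pi_p^{\pi,\sigma}(x)\circ\beta$, where $\beta(Z)=WZY^\dagger$ sends $\mathrm S_p(\overline{\mathcal K'},\mathcal H')$ to $\mathrm S_p(\overline{\mathcal M},\mathcal L)$, $\alpha(R)=V^*R(X^*)^\dagger$ sends it back, and $\Pi_p^{\pi,\sigma}$ is the representation of $A\odot B$ attached to $(\pi,\sigma)$ as in Definition \ref{defn_rep}.

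The two key estimates are then that $\alpha,\beta$ are complete contractions and that $\|\Pi_p^{\pi,\sigma}(x)\|_{\mathrm{cb}}\le\|x\|_{A\otimes_p B}$. For the former I would use the identification $\mathrm S_p(\overline{\mathcal K},\mathcal H)\cong\mathcal H_{\mathrm C_p}\otimes_{\mathrm h}\mathcal K_{\mathrm R_p}$ from Equation \eqref{Eq:Sp-identification}: left and right multiplication by a contraction act as complete contractions on the \emph{homogeneous} factors $\mathcal H_{\mathrm C_p}$ and $\mathcal K_{\mathrm R_p}$, and functoriality of the Haagerup tensor product under complete contractions shows each two-sided multiplication map has cb-norm at most $\|V\|\,\|X\|\le1$ and $\|W\|\,\|Y\|\le1$ respectively. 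For the latter, choose faithful non-degenerate representations $\pi_0,\sigma_0$ of $A,B$; then $\pi\oplus\pi_0$ and $\sigma\oplus\sigma_0$ are faithful and non-degenerate, and the corner $\mathrm S_p(\overline{\mathcal M},\mathcal L)$ sits as a completely contractively complemented, $\Pi_p^{\pi\oplus\pi_0,\sigma\oplus\sigma_0}(x)$-invariant subspace of $\mathrm S_p(\overline{\mathcal M\oplus\mathcal K_0},\mathcal L\oplus\mathcal H_0)$ on which the larger operator restricts to $\Pi_p^{\pi,\sigma}(x)$ (both $(\pi\oplus\pi_0)(a)$ and $(\sigma\oplus\sigma_0)(b)^\dagger$ being block diagonal). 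Since restriction to such a subspace does not increase the cb-norm and Theorem \ref{thm:independence} identifies $\|\Pi_p^{\pi\oplus\pi_0,\sigma\oplus\sigma_0}(x)\|_{\mathrm{cb}}$ with $\|x\|_{A\otimes_p B}$, we conclude $\|\Phi\|_{\mathrm{cb}}\le\|\Pi_p^{\pi,\sigma}(x)\|_{\mathrm{cb}}\le\|x\|_{A\otimes_p B}$, which is exactly the desired contractivity.

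I expect the main obstacle to be bookkeeping rather than conceptual: verifying the displayed factorization of $\Phi$ correctly through the conjugate-linear identifications $\mathcal H^*\cong\overline{\mathcal H}$ and the rule $U^\dagger(\overline h)=\overline{U^*h}$, so that every dagger lands in the intended space, and checking that the Stinespring-type representations $\pi,\sigma$ being possibly \emph{non-faithful} causes no trouble — which is precisely what the direct-sum and complemented-corner argument in the final step is designed to absorb.
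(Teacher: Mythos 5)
Your proposal is correct and follows essentially the same route as the paper's proof: both factor the complete contractions $T$ and $S$ via the Wittstock--Paulsen structure theorem, then conjugate $\Pi_p^{\pi,\sigma}(x)$ by the resulting tensor-product contractions, using homogeneity of $\mc H_{\mr C_p}$, $\mc K_{\mr R_p}$ and functoriality of the Haagerup tensor product, together with Theorem \ref{thm:independence}. Your complemented-corner argument for possibly non-faithful Stinespring representations is just a more explicit rendering of the paper's one-line ``enlarging the representations'' step (adding a faithful summand and adjusting the factorization maps), so the two proofs coincide in substance.
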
}
\begin{proof}
	Let $\pi'\colon A'\to \mr B(\mc H')$ and $\sigma'\colon B'\to\mr B(\mc K')$ be {faithful} $*$-representations of $A'$ and $B'$. By the structure theorem for completely bounded maps (see \cite[Theorem 1.6]{pisier}), there exists $*$-representations $\pi\colon A\to \mr B(\mc H)$ and $\sigma\colon B\to \mr B(\mc K)$ and contractive operators $V_1, V_2\in \mr B(\mc H',\mc H)$, $W_1,W_2\in \mr B(\mc K', \mc K)$ such that
	$$ \pi'(T(a))=V_2^*\pi(a)V_1\qquad\text{and}\qquad \sigma'(S(b))=W_2^*\sigma(b)W_1$$
	for all $a\in A$ and $b\in B$. {By ``enlarging'' the $*$-representations $\pi$ and $\sigma$ if necessary, we may assume that $\pi$ and $\sigma$ are faithful.} {Since the operator spaces $\HCp$ and $\HRp$ are homogeneous, we can easily conclude that the maps $U_1:=V_1\otimes W_1\colon \HCp'\otimes_{\mr h} \KRp'\to  \HCp\otimes_{\mr h} \KRp$ and $U_2:=V_2^*\otimes W_2^*\colon \HCp\otimes_{\mr h} \KRp\to  \HCp'\otimes_{\mr h} \KRp'$ are completely contractive.} So $R\colon \mr{CB}(\HCp\otimes_{\mr h} \KRp)\to \mr{CB}(\HCp'\otimes_{\mr h} \KRp')$ given by
	$$ R(\,\cdot\,):=U_2(\,\cdot\,)U_1$$
	is completely contractive. {Since $T\otimes S$ is the restriction of $R$ to $\pi(A) \opc \sigma(B)$, we deduce that $T\otimes S\colon A{\opc} B\to \pi'(A')\opc \sigma'(B')$ is completely contractive. This, in particular, implies that
	$$ T\otimes S\colon A\opt B\to A'\opt B'$$
	is contractive.}
\end{proof}

\section{Symmetrized tensor products and their enveloping C*-algebras}

Recall that we may consider the tensor products $A\opc B$ and $A{\opt} B$ as being analogues of the $p$-pseudofunction $\mr{PF}_p(G)$. Taking cues from the recently introduced symmetrized $p$-pseudofunctions due to Liao and Yu (see \cite{LY}), and the breakthroughs that this construction has spawned, we next consider a symmetrized version of the tensor products introduced in the previous section. Let us begin by making the following simple observations.

\begin{prop}
	Suppose $A\subset \mr B(\mc H)$ and $B\subset \mr B(\mc K)$ are concrete C*-algebras. If $1\leq p\leq \infty$
and $q$ is the H\"older conjugate of $p$, then
		$$\Pi_p(a\otimes b)^\dagger=\Pi_q(a^\dagger \otimes b^\dagger ) \in \mr {CB}((\overline{\mc H})_{\mr C_q}\otimes_{\mr h} (\overline{\mc K})_{\mr R_q})=\mr{CB}((\HCp\otimes_{\mr h} \KRp)^*).$$
	for all $a\in A$, $b\in B$.
\end{prop}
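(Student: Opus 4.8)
The plan is to prove the operator identity by testing both sides against a separating family of vectors and functionals, thereby reducing the whole statement to a one-line computation on elementary tensors. First I would pin down the duality underlying the statement. Recall that $\Pi_q(a^\dagger\otimes b^\dagger)$ is the representation of Definition \ref{defn_rep} applied to the concrete C*-algebras $A^\dagger\subset\mr B(\overline{\mc H})$ and $B^\dagger\subset\mr B(\overline{\mc K})$, so that $\Pi_q(a^\dagger\otimes b^\dagger)(\overline{\xi'}\otimes\overline{\eta'})=a^\dagger\overline{\xi'}\otimes b^\dagger\overline{\eta'}$. The completely isometric dualities $(\HCp)^*\cong(\overline{\mc H})_{\mr C_q}$ and $(\KRp)^*\cong(\overline{\mc K})_{\mr R_q}$ recorded in the background, together with the identification of the dual of a Haagerup tensor product with the corresponding weak* Haagerup tensor product, yield the identification $(\HCp\otimes_{\mr h}\KRp)^*\cong(\overline{\mc H})_{\mr C_q}\otimes_{\mr h}(\overline{\mc K})_{\mr R_q}$ appearing in the statement, under which the dual pairing factors multiplicatively on elementary tensors, i.e. $\la\xi\otimes\eta,\overline{\xi'}\otimes\overline{\eta'}\ra=(\xi,\xi')(\eta,\eta')$ for $\xi,\xi'\in\mc H$ and $\eta,\eta'\in\mc K$, where the functional on $\mc H$ associated to $\overline{\xi'}\in\overline{\mc H}$ is $\zeta\mapsto(\zeta,\xi')$.

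With this in hand, since both $\Pi_p(a\otimes b)^\dagger$ and $\Pi_q(a^\dagger\otimes b^\dagger)$ are bounded operators and the elementary tensors $\overline{\xi'}\otimes\overline{\eta'}$ span a dense subspace of $(\overline{\mc H})_{\mr C_q}\otimes_{\mr h}(\overline{\mc K})_{\mr R_q}$, while an element of a dual space is determined by its values on the dense set of elementary tensors $\xi\otimes\eta$, it suffices to check that $\la\Pi_p(a\otimes b)^\dagger(\overline{\xi'}\otimes\overline{\eta'}),\xi\otimes\eta\ra=\la\Pi_q(a^\dagger\otimes b^\dagger)(\overline{\xi'}\otimes\overline{\eta'}),\xi\otimes\eta\ra$ for all such vectors; by bilinearity it is enough to treat a single elementary tensor $a\otimes b$. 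For the left-hand side, the definition of the Banach adjoint and the action of $\Pi_p$ give $\la\Pi_p(a\otimes b)^\dagger(\overline{\xi'}\otimes\overline{\eta'}),\xi\otimes\eta\ra=\la\overline{\xi'}\otimes\overline{\eta'},a\xi\otimes b\eta\ra=(a\xi,\xi')(b\eta,\eta')$. For the right-hand side, the relation $a^\dagger\overline{\xi'}=\overline{a^*\xi'}$ (and likewise for $b$) from the notation section gives $\Pi_q(a^\dagger\otimes b^\dagger)(\overline{\xi'}\otimes\overline{\eta'})=\overline{a^*\xi'}\otimes\overline{b^*\eta'}$, so pairing against $\xi\otimes\eta$ produces $(\xi,a^*\xi')(\eta,b^*\eta')$, which equals $(a\xi,\xi')(b\eta,\eta')$ by the defining property of the Hilbert space adjoint. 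The two sides agree, completing the verification.

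The step I expect to require the most care is the bookkeeping of the three interlocking conjugate-linear identifications: the Riesz identification $\mc H^*\cong\overline{\mc H}$, the relation between the Banach adjoint $\dagger$ and the Hilbert adjoint $*$, and the dual pairing of the Haagerup tensor product. In particular one must confirm that this pairing genuinely factorizes as the product of the two Riesz pairings with no stray conjugation or transposition, which is where an error would most plausibly hide; once the conventions of the notation section are imposed consistently, the cancellation $(\xi,a^*\xi')=(a\xi,\xi')$ is exactly what makes both computations land on the same scalar. Everything else---boundedness of both operators, density of the elementary tensors, and the reduction from a general $x\in A\odot B$ to a single $a\otimes b$ by bilinearity---is routine.
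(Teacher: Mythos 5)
Your proposal is correct and follows essentially the same route as the paper: both proofs reduce to pairing elementary tensors, use the convention $a^\dagger\overline{\xi'}=\overline{a^*\xi'}$, and hinge on the identity $(a\xi,\xi')(b\eta,\eta')=(\xi,a^*\xi')(\eta,b^*\eta')$, concluding by linearity and density. The extra bookkeeping you flag (the multiplicative factorization of the dual pairing on elementary tensors) is exactly what the paper's displayed computation uses implicitly.
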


\begin{proof}
	Let $a\in A\subset \mr B(\mc H)$, $b\in B\subset \mr B(\mc K)$, $\xi,\xi'\in \mc H$, and $\eta,\eta'\in \mc K$. We will view $\xi\otimes \eta$ as an element of $\HCp\otimes_{\mr h} \KRq$ and $\overline{\xi'}\otimes \overline{\eta'}$ as an element of $(\overline{\mc H})_{\mr C_q}\otimes_{\mr h} (\overline{\mc K})_{\mr R_q}=(\HCp\otimes_{\mr h} \KRq)^*$. Then
	\begin{eqnarray*}
		&&\lla \Pi_p(a\otimes b)(\xi\otimes \eta),\overline{\xi'\otimes\eta'}\rra\\
		&=&(a\xi,\xi')(b\eta,\eta')\\
		&=&(\xi,a^*\xi')(\eta,b^*\eta')\\
		&=&\lla \xi\otimes \eta, \overline{a^*\xi'} \otimes \overline{b^*\eta'} \rra\\
		&=&\lla \xi\otimes \eta, \Pi_q(a^\dagger \otimes b^\dagger)(\overline{\xi'} \otimes \overline{\eta'})\rra.
	\end{eqnarray*}
	It follows by linearity that
	$$ \Pi_p(a\otimes b)^\dagger=\Pi_q(a^\dagger\otimes b^\dagger).$$
\end{proof}

\begin{cor}\label{cor:*-isometric}
	{Let $A\subset \mr B(\mc H)$ and $B\subset \mr B(\mc K)$ be concrete C*-algebras, and suppose $p$ and $q$ are a H\"older conjugate pair.}
	\begin{enumerate}
		\item The map $J\colon A\odot B\to A^\op\odot B^\op$ defined on elementary tensors by $J(a\otimes b)=a^\op\otimes b^\op$ extends to an isometric surjection $A\opc B\cong A^{\op}\otimes_q^{\rm c} B^\op$.
		\item If $x\in A\odot B$, {then we have $\overline{\Pi_p(x^*)} = \Pi_q(x)^\dagger$, so that  $\|\Pi_p(x^*)\|_{\mr{op}}=\|\Pi_q(x)\|_{\mr{op}}$. Consequently, we have $\|x^*\|_{A{\opt} B}=\|x\|_{A{\otimes_{q}}B}$. Here, for $T\in \mr{CB}(\HCp \otimes_{\mr h} \KRp)$, the operator $\overline{T} \in \mr {CB}((\overline{\mc H})_{\mr C_p}\otimes_{\mr h} (\overline{\mc K})_{\mr R_p})$ is given by $\overline{T}( \overline{\xi} \otimes \overline{\eta}):= \overline{T(\xi \otimes \eta)}$, $\xi\in \mc H$, $\eta \in \mc K$.}
	\end{enumerate}
\end{cor}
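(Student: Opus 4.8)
The plan is to deduce both parts of the corollary from the preceding Proposition by promoting its identity on elementary tensors to all of $A\odot B$ by linearity, and then invoking the elementary facts that complex conjugation and the Banach-space adjoint are isometric operations. Throughout I regard $A^\op\subset\mr B(\overline{\mc H})$ and $B^\op\subset\mr B(\overline{\mc K})$ via the identifications $a^\op\leftrightsquigarrow a^\dagger$, $b^\op\leftrightsquigarrow b^\dagger$, and I write $\widehat\Pi_q$ for the canonical representation of $A^\op\odot B^\op$ into $\mr{CB}((\overline{\mc H})_{\mr C_q}\otimes_{\mr h}(\overline{\mc K})_{\mr R_q})$ as in Definition \ref{defn_rep}.

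For part (1) I would first note that on an elementary tensor the preceding Proposition reads $\Pi_p(a\otimes b)^\dagger=\Pi_q(a^\dagger\otimes b^\dagger)$, and that under the identifications above the right-hand side is precisely $\widehat\Pi_q(a^\op\otimes b^\op)=\widehat\Pi_q(J(a\otimes b))$. Since $J$, $\Pi_p$, $\widehat\Pi_q$ and $T\mapsto T^\dagger$ are all linear, this extends to $\Pi_p(x)^\dagger=\widehat\Pi_q(J(x))$ for every $x\in A\odot B$. As the Banach-space adjoint preserves the cb-norm, $\|T^\dagger\|_{\mr{cb}}=\|T\|_{\mr{cb}}$, I obtain
$$\|x\|_{A\opc B}=\|\Pi_p(x)\|_{\mr{cb}}=\|\Pi_p(x)^\dagger\|_{\mr{cb}}=\|\widehat\Pi_q(J(x))\|_{\mr{cb}}=\|J(x)\|_{A^\op\otimes_q^{\mr c}B^\op}.$$
Thus $J$ is a linear isometry of the dense subalgebra $A\odot B$ onto the dense subalgebra $A^\op\odot B^\op$, so it extends to an isometric surjection of the completions.

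For part (2) I would compute $\overline{\Pi_p(x^*)}$ directly on an elementary tensor $x=a\otimes b$, where $x^*=a^*\otimes b^*$. Using the definition of $\overline{(\cdot)}$ together with $T^\dagger(\overline\xi)=\overline{T^*\xi}$ gives
$$\overline{\Pi_p(a^*\otimes b^*)}(\overline\xi\otimes\overline\eta)=\overline{a^*\xi\otimes b^*\eta}=a^\dagger\overline\xi\otimes b^\dagger\overline\eta,$$
so that $\overline{\Pi_p(x^*)}=\Pi_p(a^\dagger\otimes b^\dagger)$, the representation on $(\overline{\mc H})_{\mr C_p}\otimes_{\mr h}(\overline{\mc K})_{\mr R_p}$. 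On the other hand, the preceding Proposition with $p$ and $q$ interchanged gives $\Pi_q(a\otimes b)^\dagger=\Pi_p(a^\dagger\otimes b^\dagger)$, acting on the same space $(\mc H_{\mr C_q}\otimes_{\mr h}\mc K_{\mr R_q})^*=(\overline{\mc H})_{\mr C_p}\otimes_{\mr h}(\overline{\mc K})_{\mr R_p}$. Comparing the two yields $\overline{\Pi_p(x^*)}=\Pi_q(x)^\dagger$ on elementary tensors; since $x\mapsto\overline{\Pi_p(x^*)}$ is linear (being the conjugate of the conjugate-linear map $x\mapsto\Pi_p(x^*)$) and $x\mapsto\Pi_q(x)^\dagger$ is linear, the identity holds on all of $A\odot B$. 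Because conjugation and the Banach-space adjoint are isometric for the operator norm, this gives $\|\Pi_p(x^*)\|_{\op}=\|\Pi_q(x)\|_{\op}$ at once.

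Finally I would upgrade this operator-norm identity to the stated $\opt$-norm identity. Since $\|\Pi_p(x^*)\|_{\op}=\|\Pi_q(x)\|_{\op}$ holds for \emph{any} pair of concrete representations, I apply it to the amplifications of the given representations and invoke Lemma \ref{lem:easier}, which identifies the amplified operator norm with the cb-norm, to conclude
$$\|x^*\|_{A\opt B}=\|\Pi_p(x^*)\|_{\mr{cb}}=\|\Pi_q(x)\|_{\mr{cb}}=\|x\|_{A\otimes_q B}.$$
The only genuinely nonformal inputs are that the adjoint preserves the cb-norm (for part (1)) and Lemma \ref{lem:easier} (for part (2)); I expect the main hazard to be purely bookkeeping, namely keeping straight the $p\leftrightarrow q$ interchange and the passage to conjugate spaces each time the Proposition is applied.
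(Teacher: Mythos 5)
Your proposal is correct and takes essentially the same route as the paper: part (1) is the preceding Proposition combined with the identification $a^{\op}\leftrightsquigarrow a^\dagger$ and the fact that taking Banach-space adjoints preserves cb-norms, and part (2) is the elementary-tensor computation extended by linearity, followed by the amplification trick of Lemma \ref{lem:easier} for the universal norms.

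The one point you should not file under ``bookkeeping'' is the claim that $T\mapsto\overline{T}$ is isometric for the operator norm: this is the only substantive step in the norm comparison of part (2), and it is exactly where the paper's proof does its work. It is not formal, because $(\overline{\mc H})_{\mr C_p}\otimes_{\mr h}(\overline{\mc K})_{\mr R_p}$ is built from interpolation and Haagerup tensor products, so one must actually verify that the conjugate-linear bijection $\xi\otimes\eta\mapsto\overline{\xi}\otimes\overline{\eta}$ is isometric between the two underlying Banach spaces. The paper's justification is short but genuine: under the identification $\HCp\otimes_{\mr h}\KRp\cong\mr S_p(\overline{\mc K},\mc H)$ of Equation \eqref{Eq:Sp-identification}, if $\xi\otimes\eta$ corresponds to the operator $X$, then $\overline{\xi}\otimes\overline{\eta}$ corresponds to $(X^*)^\dagger\in\mr S_p(\mc K,\overline{\mc H})$; since both the Hilbert-space adjoint and the Banach-space adjoint preserve Schatten $p$-norms, conjugation is a surjective isometry, whence $\|\overline{T}\|_{\op}=\|T\|_{\op}$. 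With that two-line verification inserted, your argument is complete and coincides with the paper's proof.
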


\begin{proof}
	\emph{(1)}: Immediate from the previous lemma {and the identification
$A^{\op}\ni a^{\op}\leftrightsquigarrow a^\dagger\in \mr B(\overline{\mc H}).$}
	
	\emph{(2)}:
	 The equality $\overline{\Pi_p(x^*)} = \Pi_q(x)^\dagger$ is straightforward from the definition. For the norm comparison we observe the following. If the operator $X \in \mr S_p(\overline{\mc K},\mc H)$ corresponds to $\xi \otimes \eta\in \HCp \otimes_{\mr h} \KRp$, then the operator $(X^*)^\dagger \in \mr S_p(\mc K, \overline{\mc H})$ corresponds to $\overline{\xi} \otimes \overline{\eta} \in (\overline{\mc H})_{\mr C_p}\otimes_{\mr h} (\overline{\mc K})_{\mr R_p}$. Since both of the Banach space adjoint and the Hilbert space adjoint have the same $S_p$-norms as the original operator, we can conclude that $\|\overline{T}\|_{\mr{op}} = \|T\|_{\mr{op}}$. This explains $\|\Pi_p(x^*)\|_{\mr{op}}=\|\Pi_q(x)\|_{\mr{op}}$.
	
		{The conclusion for the universal version comes from the same trick of replacing $\mc H$ and $\mc K$ with $\mc H\otimes \ell^2$ and $\mc K\otimes \ell^2$, respectively, and appealing to Lemma \ref{lem:easier}.}
	
\end{proof}

%
%
%
%

Equipped with the above observations, we now define ``symmetrized'' versions of the tensor products $\otimes_p^{\mr c}$ and {$\opt$}.

\begin{defn}
	Suppose $A$ and $B$ are concrete C*-algebras and $1\leq p\leq \infty$ and let $q$ be the H\"older conjugate of $p$. We define $A \otimes_{p,q}^{\mr c} B$ to be, in the language of interpolation theory, the intersection space $(A\opc B)\cap (A\otimes_q^{\mr c}B)$. That is, $A{\opqc} B$ is the operator space completion of $A\odot B$ with respect the norms $\|\cdot\|_{\mr M_n(A \otimes_{p,q}^{\mr c} B)}$ on $\mr M_n(A\odot B)$ given by
	$$ \|x\|_{\mr M_n(A \otimes_{p,q}^{\mr c} B)}:=\max\big\{\|x\|_{\mr M_n(A\otimes_p^{\mr c} B)},\|x\|_{\mr M_n(A\otimes_q^{\mr c} B)}\big\}$$
	{Similarly, we define $ A\otimes_{p,q} B$ to be the (Banach space) intersection space $(A\opt B)\cap (A\otimes_qB)$
	for C*-algebras $A$ and $B$, where $p,q\in [1,\infty]$ are H\"older conjugate.}
\end{defn}

\begin{rem}
	Let $A$ and $B$ be concrete C*-algebras (resp., C*-algebras) and $p,q\in [1,\infty]$ be H\"older conjugate.
	\begin{enumerate}
		\item By definition, we have $A\otimes_{q,p}^{\mr c}B= A\opqc B$ {(resp., $A\otimes_{q,p}B = A\opq B$).}
		\item {$A{\opq} B$ is} a Banach $*$-algebra by Corollary \ref{cor:*-isometric} (2).
	\end{enumerate}
\end{rem}

%
%

A convenient feature of the symmetrized tensor products $\otimes_{{p,q}}^{\mr c}$ and {$\otimes_{{p,q}}$} is that there is nice containment relation between $A\opqc B$ and $A{\opq} B$ as the H\"older conjugate pair $p,q$ varies. To show this we will use the following remark, which is separated from the rest of the argument since it will also be used elsewhere.

\begin{rem}\label{rem:norm-relation}
	Let $A\subset \mr B(\mc H)$ and $B\subset \mr B(\mc K)$ be concrete C*-algebras. If $p_0,p_1\in [1,\infty]$, $\theta\in (0,1)$ and $\frac{1}{p}=\frac{1-\theta}{p_0}+\frac{\theta}{p_1}$, then
	\begin{equation}\label{norm-relation}
	\big\|[x_{i,j}]\big\|_{\mr M_n(A\opc B)}\leq \big\|[x_{i,j}]\big\|_{\mr M_n(A\otimes_{p_0}^{\mr c}B)}^{1-\theta}\big\|[x_{i,j}]\big\|_{\mr M_n(A\otimes_{p_1}^{\mr c}B)}^{\theta}
	\end{equation}
	for all $n\in \N$ and $[x_{i,j}]\in \mr M_n(A\odot B)$ since
	$$\begin{array}{c l}
	& \mc H_{\mr C_{p}}\otimes_{\mr h}\mc K_{\mr R_{p}}\\
	\cong &\big[\mc H_{\mr C_{p_0}},\mc H_{\mr C_{p_1}}\big]_\theta\otimes_{\mr h} \big[\mc K_{\mr R_{p_0}},\mc K_{\mr R_{p_1}}\big]_\theta\\
	\cong&\big[\mc H_{\mr C_{p_0}}\otimes_{\mr h} \mc K_{\mr R_{p_0}},\mc H_{\mr C_{p_1}}\otimes_{\mr h} \mc K_{\mr R_{p_1}}\big]_\theta
	\end{array} $$
	completely isometrically. {The inequality \eqref{norm-relation} also implies that if $A$ and $B$ are (abstract) C*-algebras and $x\in A\odot B$, then $\|x\|_{A\opt B}\leq \|x\|_{A\otimes_{p_0}B}^{1-\theta}\|x\|_{A\otimes_{p_1}B}^{\theta}.$}
\end{rem}

\begin{prop}\label{prop:inclusions}
	Let $A\subset \mr B(\mc H)$ and $B\subset \mr B(\mc K)$ be concrete C*-algebras (resp., $A$ and $B$ be C*-algebras). Suppose $p,q\in [1,\infty]$ are H\"older conjugate, and either $p<p'<q$ or $q<p'<p$. Let $q'$ be the H\"older conjugate of $p'$. The identity map on $A\odot B$ extends to an injective complete contraction $\sigma\colon A\opqc B\to A\otimes_{p',q'}^{\mr c}B$ (resp., {injective contraction $\sigma\colon A\opq B\to A\otimes_{p',q'}B$}).
\end{prop}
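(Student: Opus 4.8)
The plan is to prove both assertions simultaneously by reducing to concrete C*-algebras, running the log-convexity estimate of Remark \ref{rem:norm-relation} for the contraction, and using the identification $A\otimes_2^{\mr c}B\cong A\otimes_{\min}B$ of Proposition \ref{prop:p=2} to handle injectivity. By Theorem \ref{thm:independence} the norms defining $A\opq B$ and $A\otimes_{p',q'}B$ are independent of the chosen faithful embeddings, so it suffices to treat concrete $A\subset\mr B(\mc H)$, $B\subset\mr B(\mc K)$ and produce an injective complete contraction $\sigma\colon A\opqc B\to A\otimes_{p',q'}^{\mr c}B$; the Banach-space statement is then the $n=1$ content of this, again combined with Theorem \ref{thm:independence}. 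Since $A\otimes_{q,p}^{\mr c}B=A\opqc B$, after relabeling we may assume $p<p'<q$.

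First I would record the arithmetic placing both $p'$ and $q'$ strictly inside $(p,q)$: choosing $\theta\in(0,1)$ with $\frac{1}{p'}=\frac{1-\theta}{p}+\frac{\theta}{q}$, one computes $\frac{1}{q'}=1-\frac{1}{p'}=\frac{\theta}{p}+\frac{1-\theta}{q}$, so $q'$ is the interpolation point with parameter $1-\theta$. Applying the matricial inequality \eqref{norm-relation} of Remark \ref{rem:norm-relation} to the exponents $\theta$ and $1-\theta$ gives, for every $[x_{ij}]\in\mr M_n(A\odot B)$,
$$\|[x_{ij}]\|_{\mr M_n(A\otimes_{p'}^{\mr c}B)}\le\|[x_{ij}]\|_{\mr M_n(A\otimes_{p}^{\mr c}B)}^{1-\theta}\,\|[x_{ij}]\|_{\mr M_n(A\otimes_{q}^{\mr c}B)}^{\theta}\le\|[x_{ij}]\|_{\mr M_n(A\opqc B)},$$
and symmetrically for $q'$ with $\theta$ and $1-\theta$ interchanged. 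Taking the maximum over $p'$ and $q'$ yields $\|[x_{ij}]\|_{\mr M_n(A\otimes_{p',q'}^{\mr c}B)}\le\|[x_{ij}]\|_{\mr M_n(A\opqc B)}$ for all $n$, so the identity on $A\odot B$ extends to a complete contraction $\sigma$.

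The hard part is injectivity, and the reason it is delicate is exactly that the individual norms $\|\cdot\|_{\otimes_p^{\mr c}}$ are not $*$-invariant, so $A\otimes_p^{\mr c}B$ has no obvious continuous embedding into a common C*-algebra; it is the symmetrization that repairs this. Since $\tfrac12=\tfrac{1/2}{p}+\tfrac{1/2}{q}$, Remark \ref{rem:norm-relation} and Proposition \ref{prop:p=2} give $\|\cdot\|_{\min}=\|\cdot\|_{\otimes_2^{\mr c}}\le\|\cdot\|_{\opqc}$, so the identity on $A\odot B$ extends to a contraction $\Phi\colon A\opqc B\to A\otimes_{\min}B$. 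As $\Phi$ factors through $\sigma$, it suffices to show $\Phi$ is injective. I would realize $A\opqc B$ concretely as the closure of $(\Pi_p\oplus\Pi_q)(A\odot B)$ inside $\mr{CB}(\mr S_p(\overline{\mc K},\mc H)\oplus_\infty\mr S_q(\overline{\mc K},\mc H))$, so a typical element is a pair $(\xi_p,\xi_q)=\lim_n(\Pi_p(x_n),\Pi_q(x_n))$ with $(x_n)$ a $\|\cdot\|_{\opqc}$-Cauchy sequence in $A\odot B$. The crucial observation is that on a fixed finite-rank operator $T$ the operators $\Pi_p(x_n)T$, $\Pi_2(x_n)T$ and $\Pi_q(x_n)T$ all coincide with the single finite-rank operator $\sum_i a_i^{(n)}T(b_i^{(n)})^\dagger$ computed in $\mr B(\overline{\mc K},\mc H)$, where $x_n=\sum_i a_i^{(n)}\otimes b_i^{(n)}$.

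To finish, suppose $\Phi(\xi)=0$; then $\Pi_2(x_n)\to 0$ in operator norm, so $\Pi_2(x_n)T\to 0$ in $\mr S_2$-norm, hence entrywise. By the coincidence above this same net equals $\Pi_p(x_n)T\to\xi_p(T)$ in $\mr S_p$-norm, hence also entrywise; uniqueness of entrywise limits forces $\xi_p(T)=0$. Since the finite-rank operators are dense in $\mr S_p(\overline{\mc K},\mc H)$ and $\xi_p$ is bounded, $\xi_p=0$, and likewise $\xi_q=0$, so $\xi=0$ and $\Phi$ (hence $\sigma$) is injective. The non-separable and abstract Banach-algebra cases follow by restricting to $n=1$ and invoking Theorem \ref{thm:independence} once more. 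I expect the finite-rank testing step to be the only genuinely subtle point, precisely because it is where symmetrization is used to bypass the failure of $*$-invariance for a single $\otimes_p^{\mr c}$.
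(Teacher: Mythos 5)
Your proposal is correct. The contraction half coincides with the paper's proof essentially verbatim: the same choice of $\theta$ with $\frac{1}{p'}=\frac{1-\theta}{p}+\frac{\theta}{q}$, the same double appeal to Remark \ref{rem:norm-relation} for $p'$ and $q'$, and the same passage to the abstract case. The injectivity half reaches the same conclusion by a differently packaged route. The paper proves injectivity of $\sigma$ directly: writing $x=\lim x_n$ with $\sigma(x)=0$, it pairs $\sigma_p(x_n)\iota_p(y)$ against dual elementary tensors $\overline{\iota_q(y')}$ and uses that this pairing is independent of the interpolation parameter on algebraic elements, so the vanishing of the $p'$-limit forces $\sigma_p(x)=0$ and $\sigma_q(x)=0$. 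You instead factor the canonical contraction $\Phi\colon A\opqc B\to A\otimes_2^{\mr c}B\cong A\otimes_{\min}B$ through $\sigma$ and prove that $\Phi$ is injective, testing on finite-rank operators $T$ and comparing entrywise limits via the identity $\Pi_s(x_n)T=\sum_i a_i^{(n)}T(b_i^{(n)})^{\dagger}$ of Equation \eqref{eq:action-on-Sp}, which is independent of $s$. This is the same key observation as the paper's (parameter-independence of the algebraic action on finite-rank objects, combined with norm convergence implying weak convergence), merely tested at $s=2$ rather than at $s=p'$, and phrased with matrix entries rather than the duality pairing $\lla\,\cdot\,,\overline{\iota_q(y')}\rra$. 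What your packaging buys is that injectivity of the canonical map into $\otimes_{\min}$ — which the paper records only afterwards, as Remark \ref{rem:*-semisimple}, by specializing the proposition to $p'=2$ — is obtained first, and the general statement follows by factorization; the paper's version is marginally more direct since it needs no factorization step. One step you assert rather than prove: that $\Phi$ factors through $\sigma$, i.e.\ that the canonical contraction $A\otimes_{p',q'}^{\mr c}B\to A\otimes_{2}^{\mr c}B$ exists. This is immediate from Remark \ref{rem:norm-relation} applied to the conjugate pair $(p',q')$, since $\frac{1}{2}=\frac{1/2}{p'}+\frac{1/2}{q'}$ and $2$ always lies between $p'$ and $q'$, so the gap is trivial to fill.
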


\begin{proof}
	It suffices to prove this in the case of concrete C*-algebras.
	Choose $\theta\in (0,1)$ so that $\frac{1}{p'}=\frac{1-\theta}{p}+\frac{\theta}{q}$.  Then
	$$\begin{array}{c l}
	&\big\|[x_{i,j}]\big\|_{\mr M_n(A\otimes_{p'}^{\mr c}B)}\\
	\leq& \big\|[x_{i,j}]\big\|_{\mr M_n(A\otimes_{p}^{\mr c}B)}^{1-\theta}\big\|[x_{i,j}]\big\|_{\mr M_n(A\otimes_{q}^{\mr c}B)}^\theta\\
	\leq& \big\|[x_{i,j}]\big\|_{\mr M_n(A\otimes_{p,q}^{\mr c}B)}.
	\end{array}$$
	for all $[x_{i,j}]\in \mr M_n(A\odot B)$ by the above remark.
	A similar argument shows that $$\big\|[x_{i,j}]\big\|_{\mr M_n(A\otimes_{q'}^{\mr c}B)}\leq \big\|[x_{i,j}]\big\|_{\mr M_n(A\otimes_{p,q}^{\mr c}B)}$$
	and, thus,
	$$\big\|[x_{i,j}]\big\|_{\mr M_n(A\otimes_{p',q'}^{\mr c}B)}\leq \big\|[x_{i,j}]\big\|_{\mr M_n(A\otimes_{p,q}^{\mr c}B)}$$
	for all $[x_{i,j}]\in \mr M_n(A\odot B)$. So the identity map on $A\odot B$ extends to a complete contraction $\sigma\colon A\otimes_{p,q}^{\mr c} B\to A\otimes_{p',q'}^{\mr c}B$.
	
	We now show $\sigma$ is injective. Suppose $\sigma(x)=0$ for some $x\in A\opqc B$ and choose a sequence $\{x_n\}\subset A\odot B$ converging to $x$ in norm. For $p\leq s\leq q$, we let $\sigma_s\colon A\opc B\to A\otimes_s^{\mr c}B$ be the canonical contraction. We also let $\iota_{s}\colon \mc H\odot \mc K\to \mc H_{\mr C_s}\otimes_{\mr h} \mc K_{\mr R_s}$ be the natural inclusion for $1\leq s\leq \infty$. Then for all $y,y'\in \mc H\odot \mc K$,
	$$ \lla \sigma_p(x)\iota_p(y),\overline{\iota_q(y')}\rra=\lim_{n\to \infty}\lla \sigma_p(x_n)\iota_p(y),\overline{\iota_q(y')}\rra=\lim_{n\to \infty}\lla \sigma_{p'}(x_n)\iota_{p'}(y),\overline{\iota_{q'}(y')}\rra=0.$$
	So $\sigma_p(x)=0$. Similarly $\sigma_q(x)=0$ and, so, $x=0$.
\end{proof}

\begin{rem}\label{rem:*-semisimple}
	Let $p$ and $q$ be a H\"older conjugate pair. Taking $p'=2$ in the above proposition implies the identity map on $A\odot B$ extends to an injective $*$-homomorphism $A{\opq} B\to A\otimes_{\min} B$ for every $1\leq p\leq \infty$. In particular, $A{\otimes_{p,q}} B$ is \emph{$*$-semisimple}, i.e., $*$-representations separate points of $A{\opq} B$. Equivalently, the canonical map from $A{\otimes_{p,q}} B$ into its enveloping C*-algebra is injective.
\end{rem}

As a consequence of the previous proposition, we find that Proposition \ref{prop:p=2} does not hold completely isometrically.

\begin{rem}
	Consider the tensor product $\C\otimes_{1,\infty}\mr K(\ell^2(\N))$. By Proposition \ref{prop:p=1 or p=infty}, we know the map $a^\op\mapsto 1\otimes a$ extends to a completely isometric injection $\mr K(\ell^2(\N))^{\op}\to\C\otimes_{1,\infty}\mr K(\ell^2(\N))$. So, by the previous proposition, the map
	$$\mr K(\ell^2(\N))^\op\ni a^\op\mapsto 1\otimes a\in \mr \C\otimes_2^{\mr c} \mr K(\ell^2(\N))$$
	is a complete contraction. Since the map $\mr K(\ell^2(\N))^\op\ni a^\op\mapsto a\in \mr K(\ell^2(\N))$ is not completely bounded, we deduce the canonical mapping $\C\otimes_2 \mr K(\ell^2(\N))\to \mr K(\ell^2(\N))$ is not completely bounded. In particular, this shows that Proposition \ref{prop:p=2} is false when the term ``isometric'' is replaced with ``completely isometric''.
\end{rem}

We pause to briefly contrast Proposition \ref{prop:inclusions} with work of Choi (see \cite{Choi}) to show how different the norm structures of $A{\opt} B$ and $A{\opq} B$ can be.

\begin{rem}
	Incidentally, the norm structure of $A\otimes_{\mr h} B^{\op}\cong A\otimes_{\infty} B$ has been studied by Choi in the unpublished manuscript \cite{Choi} where it is shown that there exists nuclear C*-algebras $A$ and $B$ such that the map $a\otimes b^\op\mapsto a\otimes b$ does not extend to a bounded linear map $A\otimes_{\mr h} B^\op \to A\otimes_{\min} B$ (see \cite[Proposition 3.2 and Remark 3.3]{Choi}). In other words, there exists nuclear C*-algebras $A$ and $B$ such that the identity map on $A\odot B$ does not extend to a bounded linear map $A\otimes_\infty B\to A{\otimes_2} B$.
	
	Fix C*-algebras $A$ and $B$ such that the identity map on $A\odot B$ cannot extend to a bounded linear map $A\otimes_\infty B\to A\otimes_{\min} B$. Suppose $2<p<\infty$ and choose $\theta\in (0,1)$ such that $\frac{1}{p}=\frac{1-\theta}{2}$. Remark \ref{rem:norm-relation} implies that
	$$ \|x\|_{A{\opt} B}\leq \|x\|_{A\otimes_{\min}B}^{1-\theta}\|x\|_{A\otimes_{\infty}B}^\theta$$
	for all $x\in A\odot B$ and, hence, the identity map on $A\odot B$ does not extend to a bounded linear map $A{\opt} B\to A\otimes_{\min}B$. A similar argument shows there to exist nuclear C*-algebras $C$ and $D$ such that the identity map on $C\odot D$ does not extend to a bounded linear map $C{\otimes_q}D\to C\otimes_{\min}D$ for all $1\leq q<{2}$.
	
	Since the identity map on $A\odot B$ extends to a (complete) contraction $A{\opq} B\to A\otimes_{\min} B$ for all C*-algebras $A$ and $B$ and H\"older conjugate pair $p,q\in [1,\infty]$, we deduce that if $p\in [1,\infty]\backslash\{2\}$, then there exists nuclear C*-algebras $A$ and $B$ such that the identity map on $A\odot B$ does not extend to a bounded linear map $A{\opt} B\to A{\opq} B$, where $q$ is the H\"older conjugate of $p$. By Corollary \ref{cor:*-isometric} (2) we equivalently deduce that for every $p\in [1,\infty]\backslash\{2\}$, there exists nuclear C*-algebras $A$ and $B$ such that the involution on $A\odot B$ does not extend to a bounded conjugate linear map on $A{\opt} B$.
\end{rem}

\subsection*{Enveloping C*-algebras}

The tensor product functors we have constructed thus far do not produce C*-algebras in general, but there is a way of constructing potentially new C*-tensor products from the symmetrized construction: simply take the enveloping C*-algebras {of $A\opq B$}. This can be thought of an analogue of the potentially exotic C*-algebras $\mr C^*(\pf^*_p(G))$, the enveloping C*-algebra of $\pf^*_p(G)$, considered by the second and third author in \cite{SW-exotic}.

\begin{defn}\label{def:enveloping}
	{For C*-algebras $A$ and $B$, we set
	$$ A\otimes_{\mr C^*_{p,q}}B:=\mr C^*(A\otimes_{p,q}B),$$
	the enveloping C*-algebra of $A\opt B$.}
\end{defn}

Observe that if $A$ and $B$ are C*-algebras, then $A\otimes_{\mr C^*_{2,2}}B\cong A\otimes_{\min}B$ isometrically by Proposition \ref{prop:p=2}. Viewing the enveloping C*-algebras of the symmetrized $p$-pseudofunctions as being analgous to the constructions in Definition \ref{def:enveloping} and the full group C*-algebra construction as being analagous to the maximal C*-tensor product construction, one might expect that $\otimes_{\mr C^*_{1,\infty}}$ coincides with $\otimes_{\max}$, the maximal C*-tensor product, since $\mr C^*(\mr{PF}^*_{1}(G))\cong\mr C^*(\mr L^1(G))\cong\mr C^*(G)$ for every locally compact group $G$. We finish this section by proving that this is indeed the case.

\begin{thm}\label{thm:max}
	If $A$ and $B$ are C*-algebras, then
	$ A\otimes_{\mr C^*_{1,\infty}}B\cong A\otimes_{\max}B$
	canonically.
\end{thm}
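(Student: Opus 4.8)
I want to show $A\otimes_{\mr C^*_{1,\infty}} B \cong A\otimes_{\max} B$ canonically. Recall $A\otimes_{\mr C^*_{1,\infty}} B$ is the enveloping C*-algebra of the Banach $*$-algebra $A\otimes_{1,\infty} B$, which is the intersection space $(A\otimes_1 B)\cap(A\otimes_\infty B)$ with norm $\|x\| = \max\{\|x\|_{A\otimes_1 B}, \|x\|_{A\otimes_\infty B}\}$. The enveloping C*-norm is $\|x\|_{\mr{env}} = \sup_\rho \|\rho(x)\|$, the supremum over $*$-representations $\rho$ of $A\otimes_{1,\infty} B$ on Hilbert space that are contractive for the $\|\cdot\|_{1,\infty}$-norm. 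Since $A\otimes_{\max} B$ is by definition the completion of $A\odot B$ under $\|x\|_{\max} = \sup_\rho \|\rho(x)\|$ over all $*$-representations of the $*$-algebra $A\odot B$ (without any continuity constraint), and every $*$-representation automatically arises from a pair of commuting $*$-representations of $A$ and $B$, the inequality $\|x\|_{\mr{env}} \le \|x\|_{\max}$ is immediate. The whole content is the reverse inequality.

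So what I need to show is: \textbf{every $*$-representation $\rho$ of $A\odot B$ (equivalently, every pair of commuting $*$-representations $\pi$ of $A$ and $\sigma$ of $B$) is automatically contractive for $\|\cdot\|_{A\otimes_{1,\infty} B}$}; then $\|\cdot\|_{\max} \le \|\cdot\|_{\mr{env}}$ follows and the two norms coincide. By $*$-semisimplicity (Remark \ref{rem:*-semisimple}) the map $A\otimes_{1,\infty} B \to A\otimes_{\mr C^*_{1,\infty}} B$ is injective, so the identification is a genuine isomorphism.

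\textbf{The plan for the key estimate.} Let $\pi\colon A\to \mr B(\mc H)$ and $\sigma\colon B\to \mr B(\mc H)$ be commuting $*$-representations giving $\rho = \pi\cdot\sigma$ on $\mc H$. I want $\|\rho(x)\|_{\mr B(\mc H)} \le \|x\|_{A\otimes_{1,\infty} B} = \max\{\|x\|_{A\otimes_1 B}, \|x\|_{A\otimes_\infty B}\}$ for $x\in A\odot B$. The natural route is to use Proposition \ref{prop:p=1 or p=infty}: $A\otimes_\infty B \cong A\otimes_{\mr h} B^{\op}$ and $A\otimes_1 B\cong B^{\op}\otimes_{\mr h} A$, completely isometrically. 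So I would first identify the $\|\cdot\|_{1,\infty}$-norm of $x = \sum_i a_i\otimes b_i$ with $\max\{\|\sum_i a_i\otimes b_i^{\op}\|_{A\otimes_{\mr h} B^{\op}}, \|\sum_i b_i^{\op}\otimes a_i\|_{B^{\op}\otimes_{\mr h} A}\}$. The classical characterization of the Haagerup norm gives $\|\sum_i a_i\otimes b_i^{\op}\|_{\mr h} = \inf\big\{\big\|\sum_i a_ia_i^*\big\|^{1/2}\big\|\sum_i b_i^{\op *}b_i^{\op}\big\|^{1/2}\big\}$ over all representations $x=\sum a_i\otimes b_i^{\op}$, and symmetrically for the other factor. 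Because $\pi(a_i)$ and $\sigma(b_i)$ commute, I can estimate $\|\rho(x)\|=\|\sum_i \pi(a_i)\sigma(b_i)\|$ by a Cauchy--Schwarz / factorization argument: writing $\rho(x)$ as a product of a row operator $[\pi(a_i)]$ and a column operator $[\sigma(b_i)]^t$ and controlling $\|\sum_i\pi(a_i)\pi(a_i)^*\|$ and $\|\sum_i\sigma(b_i)^*\sigma(b_i)\|$ by the corresponding norms in $A$ and $B$ via the $*$-homomorphism property. The two factors $\|\cdot\|_{A\otimes_1 B}$ and $\|\cdot\|_{A\otimes_\infty B}$ supply exactly the two ways of bracketing this product (which of $a_i,b_i$ gets the row versus column role), and taking the max dominates $\|\rho(x)\|$.

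\textbf{The main obstacle.} The delicate point is making the factorization estimate tight enough that the \emph{maximum} of the two Haagerup-type norms, rather than just one of them, is what bounds $\|\rho(x)\|$ — this is precisely why symmetrization to $\otimes_{1,\infty}$ (rather than $\otimes_\infty$ alone) is needed, since a single Haagerup norm need not dominate the representation norm. Concretely, I expect to invoke that for commuting families $\{\pi(a_i)\},\{\sigma(b_i)\}$ one has $\big\|\sum_i\pi(a_i)\sigma(b_i)\big\| \le \big\|\sum_i\pi(a_i)\pi(a_i)^*\big\|^{1/2}\big\|\sum_i\sigma(b_i)^*\sigma(b_i)\big\|^{1/2}$, and that each scalar factor is bounded by the corresponding C*-norm $\|\sum_i a_ia_i^*\|^{1/2}$ in $A$ and $\|\sum_i b_i^*b_i\|^{1/2}$ in $B$ (using that $\pi,\sigma$ are contractive $*$-homomorphisms). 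Taking the infimum over factorizations of $x$ then yields $\|\rho(x)\|\le \|x\|_{A\otimes_{\mr h}B^{\op}} = \|x\|_{A\otimes_\infty B}\le \|x\|_{1,\infty}$. One must be careful that the row/column factorizations are compatible with the opposite-algebra identifications of Proposition \ref{prop:p=1 or p=infty}, and that the infimum-defining factorizations on the Haagerup side genuinely realize the representation-theoretic bound; verifying this compatibility is the technical heart. Once the estimate $\|\rho(x)\|\le\|x\|_{1,\infty}$ is established for all commuting pairs, passing to the supremum gives $\|x\|_{\max}\le\|x\|_{\mr{env}}$, completing the proof.
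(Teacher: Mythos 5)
Your overall framework is sound: it suffices to prove $\|x\|_{A\otimes_{\max}B}\leq \|x\|_{A\otimes_{1,\infty}B}$ for $x\in A\odot B$, since the reverse inequality is automatic and Remark \ref{rem:*-semisimple} handles injectivity. The genuine gap is in your key estimate, exactly at the point you call ``the technical heart.'' The Cauchy--Schwarz factorization for a commuting pair $(\pi,\sigma)$ bounds $\big\|\sum_i\pi(a_i)\sigma(b_i)\big\|$ by $\big\|\sum_i a_ia_i^*\big\|^{1/2}\big\|\sum_i b_i^*b_i\big\|^{1/2}$, i.e.\ by the Haagerup norm of $x$ in $A\otimes_{\mr h}B$ (row norm on the $a$'s, \emph{column} norm on the $b$'s). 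This is \emph{not} the norm of $A\otimes_\infty B$: under the identification $A\otimes_\infty B\cong A\otimes_{\mr h}B^{\op}$ of Proposition \ref{prop:p=1 or p=infty}, multiplication on the $B$-side is reversed, so $(b^{\op})^*b^{\op}=(bb^*)^{\op}$ and
$$\|x\|_{A\otimes_\infty B}=\inf\Big\{\Big\|\sum_i a_ia_i^*\Big\|^{1/2}\Big\|\sum_i b_ib_i^*\Big\|^{1/2}\Big\},\qquad \|x\|_{A\otimes_1 B}=\inf\Big\{\Big\|\sum_i a_i^*a_i\Big\|^{1/2}\Big\|\sum_i b_i^*b_i\Big\|^{1/2}\Big\}.$$
These are the row--row and column--column norms; neither matches the row--column norm that controls representations, and your claimed conclusion $\|\rho(x)\|\leq\|x\|_{A\otimes_\infty B}$ is provably false in general. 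Indeed it would give $\|\cdot\|_{\max}\leq\|\cdot\|_{A\otimes_\infty B}$, whereas by Choi's result \cite[Proposition 3.2 and Remark 3.3]{Choi} (quoted in the unnumbered remark following Proposition \ref{prop:inclusions}) there exist \emph{nuclear} $A,B$ --- so $\otimes_{\min}=\otimes_{\max}$ --- for which the identity map on $A\odot B$ does not even extend to a bounded map $A\otimes_\infty B\to A\otimes_{\min}B$. Note also that your conclusion contradicts your own (correct) earlier observation that a single Haagerup-type norm cannot dominate the representation norm; the maximum of the two norms never genuinely enters your argument.

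The missing idea, which is how the paper closes this gap, is complex interpolation. Both $A\otimes_{\mr h}B$ (row--column) and $A^{\op}\otimes_{\mr h}B^{\op}$ (column--row) map contractively into $A\otimes_{\max}B$ --- the first by the classical free-product/Cauchy--Schwarz argument, the second because $(A\otimes_{\max}B)^{\op}\cong A^{\op}\otimes_{\max}B^{\op}$ --- hence so does the midpoint interpolation space $[A\otimes_{\mr h}B,\,A^{\op}\otimes_{\mr h}B^{\op}]_{1/2}$. Since $[B,B^{\op}]_{1/2}=[B^{\op},B]_{1/2}$ and the Haagerup tensor product commutes with interpolation, this space is canonically isometric to $[A\otimes_{\mr h}B^{\op},\,A^{\op}\otimes_{\mr h}B]_{1/2}\cong[A\otimes_\infty B,\,B\otimes_1 A]_{1/2}$ by Proposition \ref{prop:p=1 or p=infty}. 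Combining this with Proposition \ref{prop:commutative} yields
$$\|x\|_{A\otimes_{\max}B}\;\leq\;\|x\|_{A\otimes_\infty B}^{1/2}\,\|x\|_{A\otimes_1 B}^{1/2}\;\leq\;\max\big\{\|x\|_{A\otimes_\infty B},\|x\|_{A\otimes_1 B}\big\}=\|x\|_{A\otimes_{1,\infty}B},$$
so it is the geometric-mean bound produced by the interpolation trick --- not a direct row/column factorization --- that allows the \emph{pair} of norms to dominate the maximal norm. Without some substitute for this step your argument cannot be repaired.
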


\begin{proof} Recall that the map identity map on $A\odot B$ extends to a (complete) contraction $A\otimes_{\mr h} B\to A\otimes_{\max}B$ since $A\otimes_{\mr h} B$ embeds canonically inside of the free product $A*B$ and $A*B$ quotients onto $A\otimes_{\max} B$. As $(A\otimes_{\max} B)^\op\cong A^\op\otimes_{\max} B^\op$ canonically, we also have that the map $A^\op\otimes_{\mr h} B^\op\to A\otimes_{\max} B$ defined on elementary tensors by $a^\op\otimes b^\op\mapsto a\otimes b$ is contractive.
Consider the complex interpolation space $[A\otimes_{\mr h} B,A^\op\otimes_{\mr h} B^\op]_{1/2}$. By the above observations, the canonical map
$$[A\otimes_{\mr h} B,A^\op\otimes_{\mr h} B^\op]_{1/2}\to A\otimes_{\max} B$$
is contractive. Notice that
$$\begin{array}{c l}
	&[A\otimes_{\mr h} B,A^\op\otimes_{\mr h} B^\op]_{1/2}\\
	\cong& [A,A^\op]_{1/2}\otimes_{\mr h} [B, B^\op]_{1/2}\\
	\cong& [A,A^\op]_{1/2}\otimes_{\mr h} [B^\op, B]_{1/2}\\
	\cong & [A\otimes_{\mr h} B^\op,A^\op\otimes_{\mr h} B]_{1/2}\\
	\cong & [A\otimes_{\infty}B, B\otimes_1 A]_{1/2}
\end{array}$$
(completely) isometrically, where the final line is given by Proposition \ref{prop:p=1 or p=infty}. So, by the above observation and Proposition \ref{prop:commutative}, if $\Sigma\colon A\odot B\to B\odot A$ is the tensor flip and $x\in A\odot B$, then
$$\begin{array}{r c l}
\|x\|_{A\otimes_{\max}B} & \leq &\|x\|_{[A\otimes_{\mr h} B,A^\op\otimes_{\mr h} B^\op]_{1/2}}\\
&\leq& \|x\|_{A\otimes_{\infty}B}^{1/2}\|\Sigma x\|_{B\otimes_1 A}^{1/2}\\
	&=& \|x\|_{A\otimes_{\infty}B}^{1/2}\|x\|_{A\otimes_{1}B}^{1/2}\\
	&\leq & \max\{\|x\|_{A\otimes_{\infty}B},\|x\|_{A\otimes_{1}B}\}\\
	&=& \|x\|_{A\otimes_{1,\infty}B}.
\end{array}$$
\end{proof}

\section{Example: Group C*-algebras}

In this section, we will study the {tensor product $\otimes_{\mr C^*_{p,q}}$ in the context of} reduced group C*-algebras of discrete groups. We will see that the C*-algebras $\mr C^*_{\mr r}(G_1)\otimes_{\mr C^*_{p,q}}\mr C^*_{\mr r}(G_2)$ coincide with a Brown-Guentner type C*-completion of $\ell^1(G_1\times G_2)$ for many such groups. Let us begin by reviewing the Brown-Guentner construction.

Let $G$ be a locally compact group. For each unitary representation of $\pi\colon G\to \mr B(\mc H_\pi)$ of $G$ and $\xi,\eta\in \mc H_\pi$, we let $\pi_{\xi,\eta}\colon G\to \C$ denote the matrix coefficient function defined by $\pi_{\xi,\eta}(s)=(\pi(s)\xi,\eta)$. Recall that the \emph{Fourier-Stieltjes algebra} of $G$ is the set of all matrix coefficient functions of $G$
$$ \mr B(G):=\{\pi_{\xi,\eta} \mid \pi\colon G\to \mr B(\mc H_\pi)\text{ is a unitary representation of }G,\ \xi,\eta\in \mc H_\pi\}.$$
Then $\mr B(G)=\mr C^*(G)^*$ with respect to the dual pairing
$$ \lla a,u\rra=(\pi(a)\xi,\eta)$$
for all $a\in \mr C^*(G)$ and $u\in \mr B(G)$, where $\pi\colon G\to \mr B(\mc H_\pi)$ and $\xi,\eta\in \mr B(\mc H_\pi)$ are such that $u=\pi_{\xi,\eta}$. Then $\mr B(G)$ is a Banach algebra with respect to pointwise addition and  multiplication, and the norm it obtains by virtue of being the dual space of $\mr C^*(G)$. We also let $\mr P(G)$ denote the cone of continuous positive definite functions on $G$, i.e.
$$ \mr P(G):=\{\pi_{\xi,\xi} \mid \pi\colon G\to \mr B(\mc H_\pi)\text{ is a unitary representation of }G,\ \xi\in \mc H_\pi\}.$$

All facts about the Fourier-Stieltjes algebra used in this paper may be found in either \cite{Ey} or \cite{KL}.

\begin{defn}[Brown-Guentner {\cite{BG}}]
	Let $G$ be a discrete group and $D$ be a set of functions from $G$ to $\mathbb{C}$. A unitary representation $\pi\colon G\to \mr B(\mc H_\pi)$ is a \emph{$D$-representation} of $G$ if $\mc H_\pi$ admits a dense subspace $\mc H_0$ such that $\pi_{\xi,\xi}\in D$ for all $\xi\in \mc H_0$. The \emph{$D$-C*-algebra} of $G$ is the ``completion'' of $\ell^1(G)$ with respect to the C*-seminorm
	$$\|f\|_{\mr C^*_D}:=\sup\{\|\pi(f)\|: \pi\text{ is a $D$-representation of }G\}$$
	for $f\in \ell^1(G)$.
\end{defn}

The Brown-Guentner construction has previously essentially only been studied in the cases when $D=\ell^p(G)$ and $D=\mr c_0(G)$. We will study in the case when $G$ is a direct product of two discrete groups and $D$ is identified with matrix coefficients of Schatten $p$-class operators.

Suppose $G_1$ and $G_2$ are discrete groups. We let $ \mr S_p(G_1\times G_2)$ denote the set of all functions $f\colon G_1\times G_2\to \C$ such that $[f(s,t)]_{(s,t)\in G_1\times G_2}$
is the matrix representation of an operator in $\mr S_p(\overline{\ell^2(G_2)},\ell^2(G_1))$ with respect to the standard bases of $\ell^2(G_1)$ and $\overline{\ell^2(G_2)}$. Further, we identify $\mr S_p(G_1\times G_2)$ with $\mr S_p(\overline{\ell^2(G_2)},\ell^2(G_1))$ in the natural way. We will write $\mr C^*_{\mr S_p}(G_1\times G_2)$ in place of {$\mr C^*_{\mr S_p(G_1 \times G_2)}(G_1\times G_2)$} in order simplify notation.
Then
\begin{equation}\label{eq:BSp}
\mr B_{\mr S_p}(G_1\times G_2):=\overline{\mr{span}\big(\mr P(G_1\times G_2)\cap \mr S_p(G_1\times G_2)\big)}^{\sigma(\mr B(G_1\times G_2),\mr C^*(G_1\times G_2)}
\end{equation}
is the dual of $\mr C^*_{\mr S_p}(G_1\times G_2)$ by \cite[Proposition 4.2 and Proposition 4.3]{W} since $\mr S_p(G_1\times G_2)$ is invariant under left and right translation by $G_1\times G_2$.


\begin{rem}
	As noted by Brown and Guentner (see \cite{BG}), the left regular representation $\lambda$ of a discrete group $G$ is a $\mr{c_c}(G)$-representation since $\lambda_{\xi,\xi}\in \mr{c_c}(G)$ for all $\xi\in \mr{c_c}(G)\subset \ell^2(G)$. {Here, $\mr{c_c}(G)$ is the space of all finitely supported functions on $G$.} Suppose $1\leq p\leq 2$ and $G_1,G_2$ are discrete groups. Since
	$$\mr{c_c}(G_1\times G_2)\subset \mr S_p(G_1\times G_2)\subset \ell^2(G_1\times G_2)$$
	and $\mr C^*_{\ell^2}(G)\cong \mr C^*_{\mr r}(G)$ canonically for every discrete group $G$ (see \cite{BG}), we therefore conclude that $\mr C^*_{\mr S_p}(G_1\times G_2)\cong \mr C^*_{\mr r}(G_1\times G_2)\cong \mr C^*_{\mr r}(G_1)\otimes_{\min}\mr C^*_{\mr r}(G_2)$ canonically. We will soon see that this need not be true for $p>2$.
%
\end{rem}

Our study of the C*-algebras $\mr C^*_{\mr S_p}(G_1\times G_2)$ begins with showing that $\mr B_{\mr S_p}(G_1\times G_2)$ is an ideal of $\mr B(G_1\times G_2)$.

\begin{prop}
	Let $u\in \mr B(G_1\times G_2)$ and $2\leq p\leq \infty$. If $f\in \mr S_p(G_1\times G_2)$, then $uf\in \mr S_p(G_1\times G_2)$. Further, the multiplication map
	$$\mr S_p(G_1\times G_2)\ni f\mapsto uf\in \mr S_p(G_1\times G_2)$$
	is completed bounded with {cb-norm} at most $\|u\|_{\mr B(G_1\times G_2)}$.
\end{prop}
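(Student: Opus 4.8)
The plan is to recognize pointwise multiplication by $u$ as a Schur (Hadamard) multiplier and to exploit the product structure of $G_1\times G_2$ to produce a Grothendieck-type factorization of its symbol, from which complete boundedness on every Schatten class follows by a realization-plus-interpolation argument.

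First I would fix a unitary representation $\pi\colon G_1\times G_2\to \mr B(\mc H_\pi)$ and vectors $\xi,\eta\in\mc H_\pi$ with $u=\pi_{\xi,\eta}$. Setting $\pi_1(s):=\pi(s,e)$ and $\pi_2(t):=\pi(e,t)$ yields commuting unitary representations of $G_1$ and $G_2$ with $\pi(s,t)=\pi_1(s)\pi_2(t)$, so that
$$u(s,t)=(\pi_1(s)\pi_2(t)\xi,\eta)=(\pi_2(t)\xi,\pi_1(s^{-1})\eta)=(y_t,x_s),$$
where $x_s:=\pi_1(s^{-1})\eta$ and $y_t:=\pi_2(t)\xi$ satisfy $\sup_s\|x_s\|=\|\eta\|$ and $\sup_t\|y_t\|=\|\xi\|$. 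This is precisely the factorization of the symbol of a bounded Schur multiplier.

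Next, under the identification of $\mr S_p(G_1\times G_2)$ with $\mr S_p(\overline{\ell^2(G_2)},\ell^2(G_1))$, the map $f\mapsto uf$ is the Schur product with the matrix $[u(s,t)]$. I would realize this multiplier concretely: with $\mc L:=\mc H_\pi$ and the bounded operators $P\colon\ell^2(G_1)\to\ell^2(G_1)\otimes\mc L$, $P\delta_s=\delta_s\otimes x_s$, and $Q\colon\ell^2(G_2)\to\ell^2(G_2)\otimes\mc L$, $Q\delta_t=\delta_t\otimes y_t$ (so that $\|P\|=\sup_s\|x_s\|$ and $\|Q\|=\sup_t\|y_t\|$, using orthogonality of the $\delta$'s), a direct matrix-entry computation gives $M_u(T)=P^*(T\otimes 1_{\mc L})Q$. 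On $\mr K(\overline{\ell^2(G_2)},\ell^2(G_1))$ this is manifestly completely bounded with cb-norm at most $\|P\|\,\|Q\|$, since $T\mapsto T\otimes 1_{\mc L}$ is completely isometric into $\mr B(\cdots\otimes\mc L)$ and left/right multiplication by the fixed operators $P^*,Q$ is completely bounded with cb-norm equal to operator norm.

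The main obstacle is the trace-class endpoint: because $\mc L$ is infinite dimensional, $T\otimes 1_{\mc L}$ fails to be trace class, so one cannot bound $\|M_u(T)\|_{\mr S_1}$ by naively factoring through $\mr S_1(\cdots\otimes\mc L)$. I would instead either pass to the adjoint (the symbol of $M_u$ on $\mr{TC}$ is the transpose of the symbol on $\mr K$, which admits the same type of factorization with the roles of $x_s$ and $y_t$ interchanged) or estimate directly: for a rank-one $T=|e\rangle\langle h|$, choosing an orthonormal basis $\{l_k\}$ of $\mc L$ one computes $\sum_k\|P^*(e\otimes l_k)\|^2=\sum_s|e_s|^2\|x_s\|^2\le\|P\|^2\|e\|^2$ and the analogous bound for $Q$, whence Cauchy--Schwarz gives $\|M_u(T)\|_{\mr S_1}\le\|P\|\,\|Q\|\,\|T\|_{\mr S_1}$; summing over the singular value decomposition yields the trace-class bound in general. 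With complete boundedness at both endpoints, operator space complex interpolation $\mr S_p=[\mr K,\mr{TC}]_{1/p}$ gives $\|M_u\colon\mr S_p\to\mr S_p\|_{\mr{cb}}\le\|P\|\,\|Q\|=\|\xi\|\,\|\eta\|$. In particular $uf=M_u(f)\in\mr S_p(G_1\times G_2)$, where the identity $M_u(f)=uf$ holds on finitely supported $f$ and extends by density together with continuity of matrix-entry evaluation on $\mr S_p$; finally, taking the infimum of $\|\xi\|\,\|\eta\|$ over all representations of $u$ produces the claimed cb-norm bound $\|u\|_{\mr B(G_1\times G_2)}$.
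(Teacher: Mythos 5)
Your proof is correct, and it reaches the conclusion by a genuinely different route than the paper. The paper starts from the same factorization of the symbol, $u(s,t)=(\xi_s,\eta_t)$ with $\|\xi_s\|\,\|\eta_t\|=\|u\|_{\mr B(G_1\times G_2)}$, but then it simply \emph{cites} the structure theorem for completely bounded Schur multipliers to settle the endpoint $p=\infty$, handles the endpoint $p=2$ by homogeneity of the operator Hilbert space (the multiplier norm on $\mr S_2$ is at most $\|u\|_{\ell^\infty(G_1\times G_2)}\leq\|u\|_{\mr B(G_1\times G_2)}$), and interpolates $\mr S_p\cong[\mr S_2,\mr S_\infty]_{2/p}$, which exactly covers the stated range $2\leq p\leq\infty$. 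You instead prove the $\mr S_\infty$ endpoint by hand through the explicit realization $M_u(T)=P^*(T\otimes 1_{\mc L})Q$ (this is precisely the easy direction of the structure theorem the paper invokes), take $\mr{TC}$ rather than $\mr S_2$ as the second endpoint, and interpolate $[\mr K,\mr{TC}]_{1/p}$. This buys two things: the argument is self-contained, and it proves more than the statement asks, namely complete boundedness on $\mr S_p$ for all $1\leq p\leq\infty$. One refinement is needed at the trace-class endpoint: your direct rank-one estimate only yields the \emph{norm} bound on $\mr S_1$, whereas the interpolation step requires the \emph{cb} bound with respect to the dual operator space structure on $\mr{TC}$; so of your two proposed fixes, the duality one is the one to use --- under the pairing $\la A,B\ra=\mr{Tr}(A^TB)$ the adjoint of $M_u\colon\mr K\to\mr K$ is again a Schur multiplier admitting the same factorization, and taking adjoints preserves cb-norms --- or else the rank-one computation must be amplified to matrix levels. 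With that choice made, the proof closes up completely.
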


\begin{proof}
{Since
	$$\mr S_p(G_1\times G_2)\cong \big[\mr S_2(G_1,G_2),\mr S_\infty(G_1\times G_2)\big]_{\frac{2}{p}}$$
as operator spaces, it is enough to check the cases $p=2$ and $p=\infty$. For the case $p=\infty$ we choose a unitary representation $\pi\colon G_1\times G_2\to \mr B(\Hi_\pi)$ and vectors $\xi,\eta\in \Hi_\pi$ so that $u(s,t)=( \pi(s,t)\xi,\eta)$ and {$\|u\|_{\mr B(G_1\times G_2)}=\|\xi\|\|\eta\|$.} Defining $\xi_s=\pi(s,e)\xi$ for $s\in G_1$ and $\eta_t=\pi(e,t^{-1})\xi$ for $t\in G_2$, we have $u(s,t)=(\xi_s,\eta_t)$ and $\|\xi_s\|\|\eta_t\|=\|u\|_{\mr B(G_1\times G_2)}$ for all $(s,t)\in G_1\times G_2$. Now the structure theorem for completely bounded Schur multipliers tells us that the corresponding map has cb-norm $\le \|u\|_{\mr B(G_1\times G_2)}$. When $p=2$, we recall the fact that operator Hilbert spaces are homogeneous, so that the corresponding multiplier has cb-norm at most $\|u\|_{\ell^\infty(G_1\times G_2)}\leq \|u\|_{\mr B(G_1\times G_2)}$.}
\end{proof}

\begin{cor}
	If $2\leq p\leq \infty$, then $\mr B_{\mr S_p}(G_1\times G_2)$ is an ideal of $\mr B(G_1\times G_2)$.
\end{cor}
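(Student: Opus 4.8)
The plan is to show that $\mr B_{\mr S_p}(G_1\times G_2)$ absorbs multiplication by arbitrary elements of $\mr B(G_1\times G_2)$, first on a weak*-dense subspace and then upgrading by a weak*-continuity argument. Write $G:=G_1\times G_2$ throughout. By \eqref{eq:BSp}, $\mr B_{\mr S_p}(G)$ is the weak*-closed linear span of $\mr P(G)\cap \mr S_p(G)$ inside $\mr B(G)=\mr C^*(G)^*$, and since $\mr B(G)$ is commutative it suffices to exhibit a weak*-dense subspace that is carried into $\mr B_{\mr S_p}(G)$ by each multiplication operator $M_u\colon v\mapsto uv$, and then to invoke weak*-continuity of $M_u$.

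First I would treat the dense part. The key algebraic input is that the pointwise product of two positive definite functions is again positive definite: if $\varphi=\pi_{\xi,\xi}$ and $\psi=\rho_{\zeta,\zeta}$, then $\varphi\psi=(\pi\otimes\rho)_{\xi\otimes\zeta,\,\xi\otimes\zeta}\in\mr P(G)$, so $\mr P(G)\cdot \mr P(G)\subseteq \mr P(G)$. Combining this with the preceding Proposition, which guarantees $uf\in \mr S_p(G)$ whenever $u\in \mr B(G)$ and $f\in \mr S_p(G)$, I obtain for $u\in \mr P(G)$ and $v\in \mr P(G)\cap \mr S_p(G)$ that $uv$ lies in both $\mr P(G)$ and $\mr S_p(G)$, hence in $\mr P(G)\cap \mr S_p(G)\subseteq \mr B_{\mr S_p}(G)$. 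Since $\mr B(G)=\mr{span}\,\mr P(G)$, bilinearity then upgrades this to $M_u(V)\subseteq V\subseteq \mr B_{\mr S_p}(G)$ for the distinguished dense subspace $V:=\mr{span}\big(\mr P(G)\cap \mr S_p(G)\big)$ and every $u\in \mr B(G)$.

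The remaining step, and the main obstacle, is passing from $V$ to its weak*-closure $\mr B_{\mr S_p}(G)$. For this I would use that for each fixed $u\in \mr B(G)$ the operator $M_u$ is weak*-continuous on $\mr B(G)=\mr C^*(G)^*$; equivalently, that $M_u$ is the Banach-space adjoint of a bounded map $S_u$ on $\mr C^*(G)$. Here $S_u$ is the pointwise multiplier $f\mapsto uf$ on $\ell^1(G)$, and to see it extends boundedly to $\mr C^*(G)$ one writes $u(s)=(\pi(s)\xi,\eta)$ for a unitary representation $\pi$, lets $\rho$ be the $*$-representation of $\mr C^*(G)$ induced by $U\otimes\pi$ (with $U$ the universal representation), and checks on the generators $U(s)$ that $S_u(\,\cdot\,)=W_\eta^*\,\rho(\,\cdot\,)\,V_\xi$, where $V_\xi,W_\eta$ are the amplification maps $h\mapsto h\otimes\xi$ and $h\mapsto h\otimes\eta$; this realizes $S_u$ as completely bounded of norm at most $\|u\|_{\mr B(G)}$. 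A direct pairing computation then confirms $S_u^{\dagger}=M_u$. (This is the standard fact that $\mr B(G)$ is a dual Banach algebra with predual $\mr C^*(G)$, and may instead simply be cited from \cite{Ey} or \cite{KL}.) Granting weak*-continuity of $M_u$, the inclusion $M_u(V)\subseteq \mr B_{\mr S_p}(G)$ together with weak*-density of $V$ and weak*-closedness of $\mr B_{\mr S_p}(G)$ yields
$$M_u\big(\mr B_{\mr S_p}(G)\big)=M_u\big(\overline{V}^{w^*}\big)\subseteq \overline{M_u(V)}^{w^*}\subseteq \mr B_{\mr S_p}(G).$$
As $u\in \mr B(G)$ was arbitrary, this is precisely the assertion that $\mr B_{\mr S_p}(G)$ is an ideal of $\mr B(G)$.
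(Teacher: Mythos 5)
Your proof is correct and takes essentially the same route as the paper's: the preceding proposition supplies the multiplier property $\mr B(G_1\times G_2)\cdot \mr S_p(G_1\times G_2)\subseteq \mr S_p(G_1\times G_2)$ on a weak*-dense part of $\mr B_{\mr S_p}(G_1\times G_2)$, and separate weak*-weak* continuity of multiplication in the Fourier--Stieltjes algebra (as the dual of $\mr C^*(G_1\times G_2)$) then passes this to the weak*-closure. The only differences are expository: you work with $\mr{span}\big(\mr P(G_1\times G_2)\cap\mr S_p(G_1\times G_2)\big)$ directly, using that products of positive definite functions are positive definite, and you supply a proof of the weak*-continuity fact that the paper simply invokes.
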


\begin{proof}
	The previous proposition and Equation \eqref{eq:BSp} imply $\mr B(G_1\times G_2)\cap \mr{S}_p(G_1\times G_2)$ is an ideal of $\mr B(G_1\times G_2)$. This corollary now follows from the fact that multiplication in the Fourier-Stieltjes algebra is separately weak*-weak* continuous.
\end{proof}


We next show that if $2\leq p\leq \infty$, then identity map on $\mr{c_c}(G_1\times G_2)\cong \mr{c_c}(G_1)\odot {\mr c_c}(G_2)$ extends to a contraction $\mr{C^*_r}(G_1){\opq}\mr{C^*_r}(G_2)\to \mr{C}^*_{\mr S_p}(G_1\times G_2)$.

\begin{lem}\label{lem:convolution-bound}
	Let $f\in \mr{c_c}(G_1\times G_2)$ and $2\leq p\leq \infty$. Then
	$$ \|f\|_{\mr C^*_{\mr S_p}}\leq \liminf_{n\to\infty} \|(f^* * f)^{*n}\|_{\mr S_q}^{\frac{1}{2n}}$$
	where $q$ is the H\"older conjugate of $p$.
\end{lem}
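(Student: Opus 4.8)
The plan is to reduce the statement to a per-representation estimate, replace the C*-norm by a vector state, and then run a spectral/Jensen argument that lets the Schatten duality enter with the conjugate exponent $q$. Write $g = f^* * f \in \mr{c_c}(G_1\times G_2)$, so $g$ is self-adjoint and each convolution power $g^{*n}$ is again finitely supported. Since $\|f\|_{\mr C^*_{\mr S_p}}$ is the supremum of $\|\pi(f)\|$ over all $\mr S_p$-representations $\pi\colon G_1\times G_2\to \mr B(\mc H_\pi)$, it suffices to bound $\|\pi(f)\|$ for a single such $\pi$, with distinguished dense subspace $\mc H_0$. The C*-identity in $\mr B(\mc H_\pi)$ gives $\|\pi(f)\|^2 = \|\pi(g)\|$, and $\pi(g) = \pi(f)^*\pi(f)\ge 0$ is a positive operator, which is what I will estimate.

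First I would fix $\pi$ and choose unit vectors $\xi_k\in \mc H_0$ with $(\pi(g)\xi_k,\xi_k)\to \|\pi(g)\|$; this is possible because $\mc H_0$ is dense and $\pi(g)$ is positive. For each fixed $k$, let $\mu_k$ be the (probability) spectral measure of $\pi(g)$ at $\xi_k$. Since $\pi$ is an algebra homomorphism, $\pi(g^{*n}) = \pi(g)^n$, so $(\pi(g^{*n})\xi_k,\xi_k) = \int \lambda^n\,d\mu_k(\lambda)$ while $(\pi(g)\xi_k,\xi_k) = \int \lambda\,d\mu_k(\lambda)$. Applying Jensen's inequality to the convex function $\lambda\mapsto \lambda^n$ on $[0,\infty)$ yields
\[
(\pi(g)\xi_k,\xi_k)^n \le (\pi(g^{*n})\xi_k,\xi_k)
\]
for every $n$. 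Next I would estimate the right-hand side by Schatten--Hölder duality: unwinding definitions,
\[
(\pi(g^{*n})\xi_k,\xi_k) = \sum_{(s,t)\in G_1\times G_2} g^{*n}(s,t)\,\pi_{\xi_k,\xi_k}(s,t) = \la g^{*n},\pi_{\xi_k,\xi_k}\ra,
\]
the trace pairing $\mr{Tr}\big((\cdot)^T(\cdot)\big)$ realizing $\mr S_q(G_1\times G_2)$ as the dual of $\mr S_p(G_1\times G_2)$. Because $g^{*n}$ is finitely supported it lies in $\mr S_q(G_1\times G_2)$, while $\pi_{\xi_k,\xi_k}\in \mr S_p(G_1\times G_2)$ by definition of an $\mr S_p$-representation; hence the pairing is at most $\|g^{*n}\|_{\mr S_q}\|\pi_{\xi_k,\xi_k}\|_{\mr S_p}$. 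Combining with the Jensen bound and taking $n$-th roots gives, for each $n$,
\[
(\pi(g)\xi_k,\xi_k)\le \|g^{*n}\|_{\mr S_q}^{1/n}\,\|\pi_{\xi_k,\xi_k}\|_{\mr S_p}^{1/n}.
\]
Taking $\liminf_{n\to\infty}$ (the left side is constant in $n$, and $\|\pi_{\xi_k,\xi_k}\|_{\mr S_p}^{1/n}\to 1$), then $k\to\infty$, and finally the supremum over $\pi$, produces $\|f\|_{\mr C^*_{\mr S_p}}^2 \le \liminf_{n\to\infty}\|(f^**f)^{*n}\|_{\mr S_q}^{1/n}$, which is the claim.

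The delicate point is the order of limits rather than any individual inequality: the factor $\|\pi_{\xi_k,\xi_k}\|_{\mr S_p}$ genuinely depends on $k$ and is uncontrolled as $k\to\infty$, so it is essential to send $n\to\infty$ first, for each fixed $k$, which annihilates this factor, before letting $k\to\infty$. The Jensen step is exactly what makes one-vector-at-a-time reasoning work: it lets a single fixed vector $\xi_k$ recover the full power $(\pi(g)\xi_k,\xi_k)^n$, avoiding the need for spectral subspaces that would otherwise have to vary with $n$ and would not lie in $\mc H_0$. The one routine check I would verify carefully is that the bilinear pairing $\sum_{s,t} g^{*n}(s,t)\pi_{\xi_k,\xi_k}(s,t)$ really is the Schatten duality $\mr{Tr}(A^T B)$ under the identification of functions on $G_1\times G_2$ with operators in $\mr S_\bullet(\overline{\ell^2(G_2)},\ell^2(G_1))$; since transposition preserves Schatten norms, this is where the conjugate pairing of $p$ against $q$ is correctly produced.
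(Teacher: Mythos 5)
Your proof is correct, and its skeleton is the same as the paper's: both reduce to a bound for a single $\mr S_p$-representation $\pi$ with distinguished dense subspace $\mc H_0$, rewrite $(\pi((f^**f)^{*n})\xi,\xi)$ as the trace pairing of the finitely supported (hence finite-rank) function $(f^**f)^{*n}$ against the matrix coefficient $\pi_{\xi,\xi}\in\mr S_p(G_1\times G_2)$, apply Schatten--H\"older duality (with transposition preserving Schatten norms), and then use that $\|\pi_{\xi,\xi}\|_{\mr S_p}^{1/n}\to 1$ as $n\to\infty$ for each \emph{fixed} vector before taking any supremum over vectors or representations. The one genuine difference lies in how the operator norm is connected to the powers: the paper quotes the formula $\|\pi(f)\|=\sup_{\xi\in\mc H_0}\lim_{n}\left(\pi(f^**f)^n\xi,\xi\right)^{1/2n}$ from the proof of Cowling--Haagerup--Howe, whereas you derive the (only needed) inequality $\le$ from scratch: the C*-identity $\|\pi(f)\|^2=\|\pi(f^**f)\|$, approximation of $\|\pi(f^**f)\|$ by vector states at unit vectors of $\mc H_0$ (legitimate since $\pi(f^**f)\ge 0$ and normalized elements of $\mc H_0$ are dense in the unit sphere), and Jensen's inequality for the spectral probability measures, giving $(\pi(g)\xi,\xi)^n\le(\pi(g)^n\xi,\xi)$. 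Your Jensen step is in effect a self-contained proof of the nontrivial direction of the cited CHH formula, and working with $\liminf$ also spares you from justifying that the limit in that formula exists; what the citation buys the paper is brevity and an exact equality, of which only one inequality is ever used. Your emphasis on the order of limits --- sending $n\to\infty$ for fixed $k$ to annihilate the uncontrolled factor $\|\pi_{\xi_k,\xi_k}\|_{\mr S_p}^{1/n}$ before letting $k\to\infty$ --- is exactly the delicate point, and you handle it correctly.
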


\begin{proof}
	Suppose $\pi$ is a representation of $G_1\times G_2$ admitting a dense subspace $\mc H_0$ of $\mc H_\pi$ so that $\pi_{\xi,\eta}\in \mr S_p(G_1,G_2)$ for all $\xi,\eta\in \mc H_0$. In the proof of \cite[Theorem 1]{CHH}, it is shown by Cowling, Haagerup and Howe that
	$$\|\pi(f)\| =\sup_{\xi\in \mc H_0}\lim_{n\to\infty}\left( \pi(f^**f)^n\xi,\xi\right)^{\frac{1}{2n}}.$$
	So we deduce
	\begin{eqnarray*}
		\|\pi(f)\| 	&=&\sup_{\xi\in \mc H_0}\lim_{n\to \infty} \left(\sum_{s\in G_1, t\in G_2} (f^**f)^{*n}(s,t)\pi_{\xi,\xi}(s,t)\right)^{\frac{1}{2n}}\\
		&=& \sup_{\xi\in \mc H_0}\lim_{n\to \infty} \mr{Tr}\bigg(\big[(f^**f)^{*n}(s,t)\big]^T\big[\pi_{\xi,\xi}(s,t)\big]\bigg)^{\frac{1}{2n}}\\
		&\leq & \sup_{\xi\in \mc H_0}\liminf_{n\to \infty} \|(f^**f)^{*n}\|_{\mr S_q}^{\frac{1}{2n}}\|\pi_{\xi,\xi}\|_{\mr S_p}^{\frac{1}{2n}}\\
		&=& \liminf_{n\to \infty} \|(f^**f)^{*n}\|_{\mr S_q}^{\frac{1}{2n}}.
	\end{eqnarray*}
{Here, $X^T$ refers to the transpose of the matrix $X$.}	
\end{proof}

{Let us move our attention back to the pair of C*-algebras $(\mr C^*_{\mr r}(G_1), \mr C^*_{\mr r}(G_2))$. Note that if $f=\sum_{i=1}^n a_i \delta_{s_i}\otimes \delta_{t_i}\in \mr{c_c}(G_1\times G_2) \subseteq \mr C^*_{\mr r}(G_1) \odot \mr C^*_{\mr r}(G_2)$, $\xi\in \ell^2(G_1)$, and $\eta\in \ell^2(G_2)$, then
$$\Pi_p(f)(\xi\otimes \eta)=\sum_{i=1}^n a_i (\delta_{s_i}* \xi)\otimes (\delta_{t_i}*\eta).$$
Hence, identifying $\mr S_p(G_1\times G_2)$ with $\ell^2(G_1)_{\mr C_p}\otimes_{\mr h} \ell^2(G_2)_{\mr R_p}\cong\mr S_p(\overline{\ell^2(G_2)},\ell^2(G_1))$, we have
\begin{equation}\label{eq:pi-p-group}
\Pi_p(f)g = f*g
\end{equation}
for all $f\in \ell^1(G)$ and $g\in \mr S_p(G_1\times G_2)$.}

\begin{cor}\label{cor:p,q-to-Sp}
	Let $2\leq p\leq \infty$ and consider the concrete C*-algebras $\mr C^*_{\mr r}(G_1)\subset \mr B(\ell^2(G_1))$ and $\mr C^*_{\mr r}(G_2)\subset \mr B(\ell^2(G_2))$. The identity map on $\mr{c_c}(G_1\times G_2)$ extends to a contractive $*$-homomorphism
	$$\mr C^*_{\mr r}(G_1){\opq}\mr C^*_{\mr r}(G_2)\to \mr C^*_{\mr S_p}(G_1\times G_2).$$
\end{cor}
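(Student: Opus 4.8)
The plan is to establish the single norm estimate
$$\|f\|_{\mr C^*_{\mr S_p}}\leq \|f\|_{\mr C^*_{\mr r}(G_1)\opq \mr C^*_{\mr r}(G_2)}\qquad\text{for all }f\in \mr{c_c}(G_1\times G_2).$$
Since the multiplication on both completions is convolution and the involution on both is the shared canonical involution of $\mr{c_c}(G_1\times G_2)\cong \mr{c_c}(G_1)\odot \mr{c_c}(G_2)$, the identity map on $\mr{c_c}(G_1\times G_2)$ is a $*$-homomorphism; once the displayed domination is proved on this dense $*$-subalgebra, it extends to the desired contractive $*$-homomorphism. Throughout I abbreviate by $\|\cdot\|_{\opt}$, $\|\cdot\|_{\otimes_q}$ and $\|\cdot\|_{\opq}$ the norms in the corresponding completions of $\mr C^*_{\mr r}(G_1)\odot\mr C^*_{\mr r}(G_2)$.

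To bound the left-hand side I would begin with Lemma \ref{lem:convolution-bound}, which supplies
$$\|f\|_{\mr C^*_{\mr S_p}}\leq \liminf_{n\to\infty}\|(f^**f)^{*n}\|_{\mr S_q}^{\frac{1}{2n}}.$$
The key step is to reinterpret the right-hand side through the representation $\Pi_q$. Writing $g:=f^**f\in \mr{c_c}(G_1\times G_2)$ and recalling from \eqref{eq:pi-p-group} that $\Pi_q(h)k=h*k$ for $k\in \mr S_q(G_1\times G_2)$, I observe that the matrix unit $\delta_{(e,e)}$ is the convolution identity and corresponds, under the identification of $\mr S_q(G_1\times G_2)$ with $\mr S_q(\overline{\ell^2(G_2)},\ell^2(G_1))$, to a norm-one element. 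Since $\Pi_q$ is multiplicative on $\mr{c_c}(G_1\times G_2)$, so that $\Pi_q(g^{*n})=\Pi_q(g)^n$, and since $g^{*n}=\Pi_q(g^{*n})\delta_{(e,e)}$, this yields
$$\|g^{*n}\|_{\mr S_q}=\|\Pi_q(g)^n\delta_{(e,e)}\|_{\mr S_q}\leq \|\Pi_q(g)\|_{\op}^{n},$$
using $\|\delta_{(e,e)}\|_{\mr S_q}=1$. Taking $\tfrac{1}{2n}$-th powers and the $\liminf$ then gives $\liminf_n\|g^{*n}\|_{\mr S_q}^{1/2n}\leq \|\Pi_q(g)\|_{\op}^{1/2}\leq \|\Pi_q(g)\|_{\mr{cb}}^{1/2}=\|g\|_{\otimes_q}^{1/2}$.

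It remains to control $\|f^**f\|_{\otimes_q}$. Using submultiplicativity in the Banach algebra $\mr C^*_{\mr r}(G_1)\otimes_q\mr C^*_{\mr r}(G_2)$ together with Corollary \ref{cor:*-isometric}(2), which gives $\|f^*\|_{\otimes_q}=\|f\|_{\opt}$, I obtain
$$\|f^**f\|_{\otimes_q}\leq \|f^*\|_{\otimes_q}\|f\|_{\otimes_q}=\|f\|_{\opt}\|f\|_{\otimes_q}\leq \|f\|_{\opq}^2.$$
Chaining the three displays produces $\|f\|_{\mr C^*_{\mr S_p}}\leq \|f\|_{\opq}$, which is exactly the required estimate. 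I expect the main obstacle to be the bookkeeping in the middle paragraph: one must verify carefully that the Schatten $q$-norm of the function $g^{*n}$, regarded as an operator, really equals $\|\Pi_q(g)^n\delta_{(e,e)}\|_{\mr S_q}$ under the stated identification, and that $\delta_{(e,e)}$ both has $\mr S_q$-norm one and acts as the convolution unit; granting these identifications, the remaining steps are a routine chain of inequalities built from Lemma \ref{lem:convolution-bound} and Corollary \ref{cor:*-isometric}(2).
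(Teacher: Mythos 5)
Your proof is correct and follows essentially the same route as the paper: both start from Lemma \ref{lem:convolution-bound}, convert the convolution powers $(f^**f)^{*n}$ into powers of $\Pi_q$ acting on a fixed element of $\mr S_q(G_1\times G_2)$ via Equation \eqref{eq:pi-p-group}, take $2n$-th roots, and then symmetrize using Corollary \ref{cor:*-isometric}(2). The only (harmless) differences are that you seed the iteration with the rank-one matrix unit $\delta_{(e,e)}$ where the paper uses $f^**f$ itself, and you invoke submultiplicativity of $\|\cdot\|_{\otimes_q}$ at the completed-norm level where the paper argues with operator norms of $\Pi_q(f^*)\Pi_q(f)$.
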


\begin{proof}
	Let $f\in\mr{c_c}(G_1\times G_2)$, $2\leq p\leq \infty$ and suppose $q$ is the H\"older conjugate of $p$. Lemma \ref{lem:convolution-bound} and Equation \eqref{eq:pi-p-group} imply
	$$\begin{array}{r c l}
	&&\|f\|_{\mr C^*_{\mr S_p}}\\
	&\leq & \liminf_{n\to \infty}\|(f^**f)^n\|^{\frac{1}{2n}}_{\mr S_q}\\
	&\leq &\liminf_{n\to \infty}\|\Pi_q(f^**f)^{n-1}(f^**f)\|_{\mr S_q}^{\frac{1}{2n}}\\
	&\leq & \liminf_{n\to \infty}\|\Pi_q(f^**f)^{n-1}\|^{\frac{1}{2n}}\|f^**f\|_{\mr S_q}^{\frac{1}{2n}}\\
	&\leq & \|\Pi_q(f^*)\pi_q(f)\|^{\frac{1}{2}}\\
	&\leq & \max\{\|\Pi_q (f^*)\|,\|\Pi_q(f)\|\}\\
	&=& \max\{\|\Pi_p (f)\|,\|\Pi_q(f)\|\}\\
	&{\leq}& \|f\|_{\mr C^*_{\mr r}(G_1){\opq}\mr C^*_{\mr r}(G_2)}.
	\end{array}$$
\end{proof}

Corollary \ref{cor:p,q-to-Sp} establishes a strong relationship between $\mr C^*_{\mr r}(G_1){\otimes_{\mr C^*_{p,q}}} \mr C^*_{\mr r}(G_2)$ and $\mr C^*_{\mr S_p}(G_1\times G_2)$: the identity map on $\mr {c_c}(G_1\times G_2)$ extends to a $*$-homomorphism
$$\mr C^*_{\mr r}(G_1){\otimes_{\mr C^*_{p,q}}} \mr C^*_{\mr r}(G_2)\to \mr C^*_{\mr S_p}(G_1\times G_2).$$
We do not know whether this $*$-homomorphism is injective in general.
\begin{question}
	Suppose $G_1$ and $G_2$ are discrete groups and $p,q\in [1,\infty]$ are H\"older conjugate with $p\geq 2$. Is it necessarily true that
	$$\mr C^*_{\mr r}(G_1){\otimes_{\mr C^*_{p,q}}} \mr C^*_{\mr r}(G_2)\cong \mr C^*_{\mr S_p}(G_1\times G_2)$$
	canonically?
\end{question}
This question will be shown to have a positive solution for many discrete groups with the rapid decay property later in this section. {Recall that a map $\mc L: G \to \mathbb{R}_+$ is called a {\em length function} on a discrete group $G$ if:
    \begin{itemize}
        \item[(i)] $\mc L(gh) \le \mc L(g) + \mc L(h)$, $g,h\in G$;
        \item[(ii)] $\mc L(g) = \mc L(g^{-1})$, $g\in G$;
        \item[(iii)] $\mc L(1) = 0$, where 1 denotes the identity of $G$.
    \end{itemize}
When $G$ is {\em finitely generated}, there is a canonical length function called the {\em word length function} (for a fixed set of generators of $G$), which is usually denoted by $|\cdot|$. We say that $G$ has a {\em rapid decay property} if there is a length function $\mc L$ on $G$ such that the space $H^\infty_{\mc L}$ is contained in the space $\tn C^*_r(G)$, where $H^\infty_{\mc L} = \{f:G \to \mathbb{C}\,|\, \sum_{g\in G}|f(g)|^2\mc L(g)^s <\infty\; \forall s\in \mathbb{R}\}$.
}

{Towards the above goal,} we next introduce spaces that can be interpreted as ``weighted Schatten classes''. Let $G_1$ and $G_2$ be discrete groups. For every non-negative function $\omega\colon G_1\times G_2\to (0,\infty)$ and $1\leq p\leq \infty$, we set
$$ \mr S_p(G_1\times G_2,\omega)=\{f\colon G_1\times G_2\to \C \mid f\omega\in \mr S_p(G_1\times G_2)\}.$$
The spaces $\mr S_p(G_1\times G_2,\omega)$ equipped with the norm $\|\cdot\|_{\mr S_p,\omega}$ defined by $\|f\|_{\mr S_p,\omega}=\|f\omega\|_{\mr S_p}$ can be thought of as an analogue of weighted $\mr L^p$-spaces and may be of independent interest for study. In this paper, however, these spaces will mostly be used for notational convenience.

\begin{lem}\label{lem:Sq-inclusion}
	Suppose $G_1$ and $G_2$ are discrete groups possessing the rapid decay property with respect to length functions $\mc L_1$ and $\mc L_2$, respectively. For $0<t<\infty$ and $i=1,2$, let $\varphi_{i,t}\colon G_i\to (0,1]$ be the function defined by $\varphi_{i,t}(s)=e^{-t\mc L_i(s)}$ for $s\in G_i$. Then
	\begin{equation}\label{eq:Sq-inclusion}
		\mr S_q(G_1\times G_2,\varphi_{1,t}^{-1}\times \varphi_{2,t}^{-1})\subset \mr C^*_{\mr r}(G_1){\opq} \mr C^*_{\mr r}(G_2)
	\end{equation}
	for all H\"older conjugate $p,q\in [1,\infty]$ with $q\leq p$ and for all $t>0$.
\end{lem}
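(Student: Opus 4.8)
The plan is to reduce the whole statement to a single norm estimate on $\mr{c_c}(G_1\times G_2)$ and then pass to the completion. Since $q\le p$ forces $q\le 2\le p$, Corollary \ref{cor:*-isometric}(2) gives $\|f\|_{\mr C^*_{\mr r}(G_1){\opt}\mr C^*_{\mr r}(G_2)}=\|f^*\|_{\mr C^*_{\mr r}(G_1)\otimes_q\mr C^*_{\mr r}(G_2)}$. The involution $f\mapsto f^*$ acts on matrices by complex conjugation together with the row/column permutations induced by inversion in $G_1,G_2$, all of which are $\mr S_q$-isometries, and the weight $\omega:=\varphi_{1,t}^{-1}\times\varphi_{2,t}^{-1}$ is invariant under $(g,h)\mapsto(g^{-1},h^{-1})$; hence $\|f^*\omega\|_{\mr S_q}=\|f\omega\|_{\mr S_q}$. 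It therefore suffices to establish, for $q\in[1,2]$, the single bound
$$\|f\|_{\mr C^*_{\mr r}(G_1)\otimes_q\mr C^*_{\mr r}(G_2)}\le C_t\,\|f\omega\|_{\mr S_q}\qquad(f\in\mr{c_c}(G_1\times G_2)),$$
apply it to $f$ and $f^*$ to control $\max$ defining the ${\opq}$-norm, and then use that $\mr{c_c}$ is dense in $\mr S_q(G_1\times G_2,\omega)$ while $\mr C^*_{\mr r}(G_1){\opq}\mr C^*_{\mr r}(G_2)$ injects into $\mr C^*_{\mr r}(G_1)\otimes_{\min}\mr C^*_{\mr r}(G_2)$ (Remark \ref{rem:*-semisimple}) to identify the limit and conclude the claimed inclusion.

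To prove the displayed bound I would decompose $f=\sum_{n,m\ge0}f_{n,m}$ along the annuli $A^{(1)}_n=\{g:n\le\mc L_1(g)<n+1\}$ and $A^{(2)}_m$, so that $\|f\|_{\otimes_q}\le\sum_{n,m}\|\Pi_q(f_{n,m})\|_{\mr{cb}}$. The heart of the matter is a Haagerup-type inequality on a single annular block,
$$\|\Pi_q(f_{n,m})\|_{\mr{cb}}\le C(1+n)^{s_1}(1+m)^{s_2}\,\|f_{n,m}\|_{\mr S_q},\qquad(\star)$$
where $s_1,s_2$ are the rapid-decay exponents of $G_1,G_2$. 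Granting $(\star)$, on the annulus one has $[f_{n,m}]=D_{\varphi_{1,t}}[f_{n,m}\omega]D_{\varphi_{2,t}}$ with diagonal multipliers of operator norm $\le e^{-tn}$ and $\le e^{-tm}$, so $\|f_{n,m}\|_{\mr S_q}\le e^{-t(n+m)}\|f_{n,m}\omega\|_{\mr S_q}$; the polynomial factor in $(\star)$ is then swallowed by the exponential and Cauchy--Schwarz yields $\|f\|_{\otimes_q}\le C_t\big(\sum_{n,m}\|f_{n,m}\omega\|_{\mr S_q}^2\big)^{1/2}$. Finally, since the blocks have pairwise orthogonal ranges along rows and pairwise orthogonal co-ranges along columns, applying superadditivity of the $\mr S_{q/2}$ quasi-norm on positive operators first to $g^*g=\sum_n g_n^*g_n$ and then to $g_ng_n^*=\sum_m g_{n,m}g_{n,m}^*$ (with $g=f\omega$) gives $\sum_{n,m}\|f_{n,m}\omega\|_{\mr S_q}^2\le\|f\omega\|_{\mr S_q}^2$, which completes the estimate.

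It remains to prove $(\star)$, the technical core. Working with the infinite amplifications of $\lambda_1,\lambda_2$, so that Lemma \ref{lem:easier} lets me replace cb-norms by operator norms, I would obtain $(\star)$ by \emph{bilinear} complex interpolation of the convolution map $(f,g)\mapsto f*g$ between the endpoints $q=2$ and $q=1$, using $\mr S_q=[\mr S_2,\mr S_1]_\theta$ in each of the three slots (coefficient block, input, output). At $q=2$, Proposition \ref{prop:p=2} identifies the norm with that of $\mr C^*_{\mr r}(G_1\times G_2)$, and property RD of the product group gives $\|\Pi_2(f_{n,m})\|=\|f_{n,m}\|_{\mr C^*_{\mr r}(G_1\times G_2)}\le C(1+n)^{s_1}(1+m)^{s_2}\|f_{n,m}\|_{\mr S_2}$. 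At $q=1$, Proposition \ref{prop:p=1 or p=infty}(2) identifies the norm with the Haagerup tensor norm on $\mr C^*_{\mr r}(G_2)^{\op}\otimes_{\mr h}\mr C^*_{\mr r}(G_1)$; feeding the singular value decomposition $[f_{n,m}(u,v)]=\sum_k\sigma_k a_k(u)b_k(v)$ into the Haagerup factorization and estimating each convolution operator by $\|\lambda_i(a_k)\|\le C(1+n)^{s_i}\|a_k\|_2$ (property RD again) produces the endpoint bound $C(1+n)^{s_1}(1+m)^{s_2}\|f_{n,m}\|_{\mr S_1}$.

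The step I expect to be the most delicate is exactly this pairing of endpoints. The crude singular-value factorization is lossy: on its own it only ever returns the $\mr S_1$-norm of the coefficient block (it bounds by $\sum_k\sigma_k$), whereas the sharp Hilbertian Haagerup inequality is available only at $q=2$ and returns the $\mr S_2=\ell^2$ norm. It is precisely the bilinear interpolation that balances the lossy $\mr S_1$ endpoint against the sharp $\mr S_2$ endpoint to yield the sharp $\mr S_q$-norm in $(\star)$, so that no completely bounded version of property RD is needed. Verifying that the three interpolation scales are genuinely compatible — with $f_{n,m}$ ranging over the finitely supported annular block while the input $g$ ranges over all of $\mr S_q$, and with the amplification reconciling cb- and operator norms — is the part that will require the most careful bookkeeping.
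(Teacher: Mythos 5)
Your proposal is correct, but it takes a genuinely different route from the paper's proof, so let me compare the two. The paper never decomposes $f$ into blocks: it builds the exponential weight directly into two endpoint inclusions --- $\mr S_2(G_1\times G_2,\varphi_{1,t}^{-1}\times\varphi_{2,t}^{-1})=\ell^2(G_1\times G_2,\varphi_{1,t}^{-1}\times\varphi_{2,t}^{-1})\subset\mr C^*_{\mr r}(G_1\times G_2)$, from rapid decay of the product group (the exponential weight dominates the polynomial RD weight), and $\mr S_1(G_1\times G_2,\varphi_{1,t}^{-1}\times\varphi_{2,t}^{-1})\cong\ell^2(G_1,\varphi_{1,t}^{-1})\otimes_\gamma\ell^2(G_2,\varphi_{2,t}^{-1})\subset\mr C^*_{\mr r}(G_1)\otimes_{1,\infty}\mr C^*_{\mr r}(G_2)$, from factorwise rapid decay plus universality of the projective tensor product --- and then runs Calder\'on's bilinear interpolation once on each of the two representation scales $[\mc H_{\mr C_1}\otimes_{\mr h}\mc K_{\mr R_1},\mc H_{\mr C_2}\otimes_{\mr h}\mc K_{\mr R_2}]$ and $[\mc H_{\mr C_\infty}\otimes_{\mr h}\mc K_{\mr R_\infty},\mc H_{\mr C_2}\otimes_{\mr h}\mc K_{\mr R_2}]$; the second interpolation is how the paper controls the $\Pi_p$-half of the $\opq$-norm, which is possible because its $q=1$ endpoint lands in $\otimes_{1,\infty}$ and hence bounds $\Pi_1$ and $\Pi_\infty$ simultaneously. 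You instead dispose of the $\Pi_p$-half at the outset by the involution trick (Corollary \ref{cor:*-isometric}(2) together with $\|f^*\omega\|_{\mr S_q}=\|f\omega\|_{\mr S_q}$, which uses $\mc L_i(s)=\mc L_i(s^{-1})$) --- a genuine simplification over the paper's double interpolation --- and then localize to annuli, prove the \emph{unweighted} polynomial block estimate $(\star)$ by bilinear interpolation between the same two endpoints (product RD at $q=2$, factorwise RD plus the cross-norm property at $q=1$), and reassemble using the exponential weight. So both proofs interpolate between the same data; the difference is where the weight enters. Putting the weight into the endpoint spaces, as the paper does, makes the conclusion immediate after interpolation; putting it in after the block estimates, as you do, costs an extra reassembly step, but in exchange $(\star)$ is a clean block-level Haagerup-type inequality for the $\otimes_q$-norms, closer in spirit to the classical RD literature and of some independent interest.

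Two caveats, neither fatal. First, your reassembly rests on superadditivity of the Schatten quasi-norm on positive operators, $\sum_n\|A_n\|_{\mr S_r}\le\bigl\|\sum_nA_n\bigr\|_{\mr S_r}$ for $r=q/2\le1$. This is true but not obvious, and you should prove or cite it: it follows from Ky Fan's majorization $\lambda(A+B)\prec\lambda(A)+\lambda(B)$, the fact that majorization reverses under the concave map $x\mapsto\sum_jx_j^r$, and the reverse Minkowski inequality in $\ell^r$ for $0<r\le1$. Second, in the coefficient slot of the bilinear interpolation you need $[\mr S_2^{\mathrm{ann}},\mr S_1^{\mathrm{ann}}]_\theta\cong\mr S_q^{\mathrm{ann}}$ isometrically for the annulus-supported blocks; this is standard because the compression $h\mapsto P_nhQ_m$ is contractive on both endpoints, so the block spaces form a $1$-complemented subcouple, but it deserves a sentence. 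Finally, at the $q=1$ endpoint the Haagerup-tensor identification of Proposition \ref{prop:p=1 or p=infty} is more machinery than you need: feeding the singular value decomposition into the cross-norm property of $\otimes_1$ already gives the bound, exactly as the paper's use of projective-tensor universality does.
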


\begin{proof}
	We first show this inclusion in the case when $q=2$.
	
	Since $G_1$ and $G_2$ have the rapid decay property with respect to $\mc L_1$ and $\mc L_2$, respectively, $G_1\times G_2$ has the rapid decay property with respect to the length function $\mc L\colon G\to [0,\infty)$ defined by $\mc L(s,t)=\mc L_1(s)+\mc L_2(t)$ (see \cite{Jollisaint}). So there exists a $d>0$ such that if $\omega_d\colon G_1\times G_2\to [1,\infty)$ is the polynomial weight defined by $\omega_d(s,t)=\big(1+\mc L_1(s)+\mc L_2(t)\big)^d$, then $\ell^2(G_1\times G_2, \omega_d)\subset \mr C^*_{\mr r}(G_1\times G_2)$. Since $\varphi_{1,t}^{-1}\times \varphi_{2,t}^{-1}$ grows faster than $\omega_d$, we deduce that
	$$\begin{array}{c l}
	&\mr S_2(G_1\times G_2, \varphi_{1,t}^{-1}\times \varphi_{2,t}^{-1})\\
	= & \ell^2(G_1\times G_2, \varphi_{1,t}^{-1}\times \varphi_{2,t}^{-1})\\
	\subset & \ell^2(G_1\times G_2, \omega_d)\\
	\subset & \mr C^*_{\mr r}(G_1\times G_2)\\
	= & \mr C^*_{\mr r}(G_1)\otimes_{\min} \mr C^*_{\mr r}(G_2)\\
	=& \mr C^*_{\mr r}(G_1){\otimes_{{2,2}}}\mr C^*_{\mr r}(G_2).
	\end{array}$$
	This establishes the result in the case when $q=2$.
	
	We now address the case when $q=1$. A similar argument as used above shows that we obtain contractive embeddings $\ell^2(G_1,\varphi_{1,t}^{-1})\subset \mr C^*_{\mr r}(G)$ and $\ell^2(G_2,\varphi_{2,t}^{-1})\subset \mr C^*_{\mr r}(G_2)$. So the universality of the Banach space projective tensor product guarantees a contractive embedding
	$$ \ell^2(G_1,\varphi_{1,t}^{-1})\otimes_\gamma \ell^2(G_2,\varphi_{2,t}^{-1})\subset \mr C^*_{\mr r}(G_1)\otimes_{1,\infty} \mr C^*_{\mr r}(G_2).$$
	As
	$$\ell^2(G_1,\varphi_{1,t}^{-1})\otimes_\gamma \ell^2(G_2,\varphi_{2,t}^{-1})\cong\mr S_1(\overline{\ell^2(G_2,\varphi_{2,t}^{-1})},\ell^2(G_1,\varphi_{1,t}^{-1}))\cong\mr S_1(G_1\times G_2,\varphi_{1,t}^{-1}\times \varphi_{2,t}^{-1})$$
	canonically, we have established the result in the case when $q=1$.
	
	{Let $\pi\colon \mr C^*_{\mr r}(G_1)\to \mr B(\mc H)$ and $\sigma\colon \mr C^*_{\mr r}(G_2)\to \mr B(\mc K)$ be faithful $*$-representations (such as the left regular representations).
	By Lemma \ref{lem:easier}, replacing $\pi$ and $\sigma$} with their infinite amplifications, it suffices to consider the operator norm in place of the completely bounded norms. Notice that establishing Equation \ref{eq:Sq-inclusion} in the case when $q=2$ is equivalent to stating the bilinear map
	\begin{equation}\label{eq:Sq-inclusion-lem}
	\mr{c_c}(G_1\times G_2)\times \big(\mc H\odot \mc K\big)\ni (f,x)\mapsto \sum_{(s,t)\in G_1\times G_2}f(s,t)\big(\pi(s)\otimes \sigma(t)\big)(x)\in \mr \mc H\odot \mc K
	\end{equation}
	extends to a bounded bilinear map
	$$\mr S_2(G_1\times G_2,\varphi_{1,t}^{-1}\times\varphi_{2,t}^{-1})\times\big(\mc H_{\mr C_2}\otimes_{\mr h} \mc K_{\mr R_2}\big)\to \mc H_{\mr C_2}\otimes_{\mr h} \mc K_{\mr R_2}.$$
	Similarly, establishing Equation \ref{eq:Sq-inclusion} in the case when $q=1$ is equivalent to stating that Equation  \eqref{eq:Sq-inclusion-lem} extends to bounded bilinear maps
	$$\mr S_1(G_1\times G_2,\varphi_{1,t}^{-1}\times\varphi_{2,t}^{-1})\times\big(\mc H_{\mr C_1}\otimes_{\mr h} \mc K_{\mr R_1}\big)\to \mc H_{\mr C_1}\otimes_{\mr h} \mc K_{\mr R_1}$$
	and
	$$\mr S_1(G_1\times G_2,\varphi_{1,t}^{-1}\times\varphi_{2,t}^{-1})\times\big(\mc H_{\mr C_\infty}\otimes_{\mr h} \mc K_{\mr R_\infty}\big)\to \mc H_{\mr C_\infty}\otimes_{\mr h} \mc K_{\mr R_\infty},$$
	Choose $\theta\in [0,1]$ so that $\frac{1}{q}=\frac{1-\theta}{1}+\frac{\theta}{2}$. Since
	$$ \mr S_q(G_1\times G_2,\varphi_{1,t}^{-1}\times \varphi_{2,t}^{-1})\cong[\mr S_1(G_1\times G_2,\varphi_{1,t}^{-1}\times \varphi_{2,t}^{-1}),\mr S_2(G_1\times G_2,\varphi_{1,t}^{-1}\times \varphi_{2,t}^{-1})]_\theta$$
	and
	$$ \HCq\otimes_{\mr h}\KRq\cong[\mc H_{\mr C_1}\otimes_{\mr h} \mc K_{\mr R_1},\mc H_{\mr C_2}\otimes_{\mr h} \mc K_{\mr R_2}]_\theta,$$
	it follows by bilinear interpolation (see \cite[Section 10.1]{bilinear-interpolation}) that Equation \eqref{eq:Sq-inclusion-lem} extends to a bounded bilinear map
	$$\mr S_q(G_1\times G_2,\varphi_{1,t}^{-1}\times\varphi_{2,t}^{-1})\times\big(\mc H_{\mr C_q}\otimes_{\mr h} \mc K_{\mr R_q}\big)\to \mc H_{\mr C_q}\otimes_{\mr h} \mc K_{\mr R_q},$$
	Similarly, since
	$$ \mr S_q(G_1\times G_2,\varphi_{1,t}^{-1}\times \varphi_{2,t}^{-1})\cong[\mr S_1(G_1\times G_2,\varphi_{1,t}^{-1}\times \varphi_{2,t}^{-1}),\mr S_2(G_1\times G_2,\varphi_{1,t}^{-1}\times \varphi_{2,t}^{-1})]_\theta$$
	and
	$$ \HCp\otimes_{\mr h}\KRp\cong[\mc H_{\mr C_\infty}\otimes_{\mr h} \mc K_{\mr R_\infty},\mc H_{\mr C_2}\otimes_{\mr h} \mc K_{\mr R_2}]_\theta,$$
	it follows that Equation \eqref{eq:Sq-inclusion-lem} extends to a bounded bilinear map
	$$\mr S_q(G_1\times G_2,\varphi_{1,t}^{-1}\times\varphi_{2,t}^{-1})\times\big(\mc H_{\mr C_p}\otimes_{\mr h} \mc K_{\mr R_p}\big)\to \mc H_{\mr C_p}\otimes_{\mr h} \mc K_{\mr R_p}.$$
	Therefore, we have a continuous embedding
	$$ \mr S_q(G_1\times G_2,\varphi_{1,t}^{-1}\times \varphi_{2,t}^{-1})\subset \mr C^*_{\mr r}(G_1){\opq} \mr C^*_{\mr r}(G_2).$$
\end{proof}

We now prove the main theorem of this section. Some classes of groups satisfying the hypotheses of the following theorem are listed in Example \ref{ex:groups}. We recall that a function $\mc L: G \to \mathbb{R}_+$ is a {\em conditionally negative definite} on a discrete group $G$ if the functions $\varphi_{i,t}\colon G_i\to (0,1]$  defined by
$$\varphi_{t}(s)=e^{-t\mc L_i(s)}  \ \ (s\in G),$$
are positive definite functions on $G$.

\begin{thm}\label{thm:group-main}
	Let $2\leq p<\infty$ and $q$ be the H\"older conjugate of $p$. Suppose $G_1$ and $G_2$ are discrete groups that possess the rapid decay property with respect to conditionally negative definite length functions $\mc L_1$ and $\mc L_2$. The following are equivalent for a positive definite function $\phi$ on $G_1\times G_2$.
	\begin{enumerate}
		\item $\phi\in \mr B_{\mr S_p}(G_1\times G_2)$;
		\item $\phi$ extends to a state of $\mr C^*_{\mr r}(G_1){\otimes_{\mr C^*_{p,q}}}\mr C^*_{\mr r}(G_2)$;
		\item $\phi\in \mr S_p(G_1\times G_2,\varphi_{1,t}\times \varphi_{2,t})$ for all $t>0$.
	\end{enumerate}
\end{thm}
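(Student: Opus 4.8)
The plan is to establish the cycle of implications $(1)\Rightarrow(2)\Rightarrow(3)\Rightarrow(1)$, treating the two structural results already in hand as bridges: the contractive $*$-homomorphism $\mr C^*_{\mr r}(G_1)\opq\mr C^*_{\mr r}(G_2)\to \mr C^*_{\mr S_p}(G_1\times G_2)$ from Corollary \ref{cor:p,q-to-Sp}, and the weighted Schatten inclusion from Lemma \ref{lem:Sq-inclusion}. The remaining ingredients are $\mr S_q$--$\mr S_p$ duality and a weak* approximation argument in $\mr B(G_1\times G_2)$.

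For $(1)\Rightarrow(2)$ I would argue as follows. By \eqref{eq:BSp} the space $\mr B_{\mr S_p}(G_1\times G_2)$ is the dual of $\mr C^*_{\mr S_p}(G_1\times G_2)$, so a positive definite $\phi\in\mr B_{\mr S_p}(G_1\times G_2)$ is exactly a positive functional on $\mr C^*_{\mr S_p}(G_1\times G_2)$ (positive definiteness of $\phi$ translates into $\la f^**f,\phi\ra\ge 0$ for $f\in\mr{c_c}$, hence positivity on $\mr C^*_{\mr S_p}(G_1\times G_2)$ by density). The $*$-homomorphism of Corollary \ref{cor:p,q-to-Sp} extends, by the universal property of the enveloping C*-algebra, to a $*$-homomorphism $\Theta\colon\mr C^*_{\mr r}(G_1)\otimes_{\mr C^*_{p,q}}\mr C^*_{\mr r}(G_2)\to\mr C^*_{\mr S_p}(G_1\times G_2)$ that is the identity on $\mr{c_c}(G_1\times G_2)$. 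Pulling $\phi$ back along $\Theta$ produces a positive functional on $\mr C^*_{\mr r}(G_1)\otimes_{\mr C^*_{p,q}}\mr C^*_{\mr r}(G_2)$ restricting to $\phi$, i.e. $\phi$ extends to a state.

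The step I expect to be the crux is $(2)\Rightarrow(3)$. Let $\tilde\phi$ be the extending state, fix $t>0$, and set $\omega=\varphi_{1,t}\times\varphi_{2,t}$. Since the enveloping-C*-norm is dominated by the $\opq$-norm (Remark \ref{rem:*-semisimple}), Lemma \ref{lem:Sq-inclusion} yields a constant $C_t$ with
$$\Big|\sum_{(s,r)}g(s,r)\phi(s,r)\Big|=|\tilde\phi(g)|\le \|g\|_{\mr C^*_{\mr r}(G_1)\opq\mr C^*_{\mr r}(G_2)}\le C_t\,\|g\cdot\omega^{-1}\|_{\mr S_q}$$
for every $g\in\mr{c_c}(G_1\times G_2)$. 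Substituting $g=h\cdot\omega$ shows that the matrix $\phi\cdot\omega$ pairs boundedly against $\mr S_q$ with constant $C_t$; because $2\le p<\infty$ forces $q\in(1,2]$, the duality $\mr S_q^*\cong\mr S_p$ via $\la A,B\ra=\mr{Tr}(A^TB)$ applies and gives $\phi\cdot\omega\in\mr S_p(G_1\times G_2)$, that is $\phi\in\mr S_p(G_1\times G_2,\varphi_{1,t}\times\varphi_{2,t})$ for all $t>0$. The real obstacle here is recognizing that Lemma \ref{lem:Sq-inclusion} is precisely the predual of condition (3); moreover the hypothesis $p<\infty$ is indispensable, since it is what makes $\mr S_q$ reflexive with dual $\mr S_p$ (for $q=1$ one would land in $\mr B(\ell^2)$ rather than $\mr S_\infty=\mr K$).

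Finally, for $(3)\Rightarrow(1)$ I would use that each $\mc L_i$ is conditionally negative definite, so $\varphi_{1,t}\times\varphi_{2,t}$ is positive definite with $\mr B$-norm equal to its value $1$ at the identity. Hence $\phi\cdot(\varphi_{1,t}\times\varphi_{2,t})$ is a positive definite function which by (3) lies in $\mr S_p(G_1\times G_2)$, so it belongs to $\mr P(G_1\times G_2)\cap\mr S_p(G_1\times G_2)$. As $\mr B(G_1\times G_2)$ is a Banach algebra under pointwise multiplication, the net $\{\phi\cdot(\varphi_{1,t}\times\varphi_{2,t})\}_{t>0}$ is bounded by $\|\phi\|_{\mr B}$, and as $t\to 0^+$ it converges pointwise to $\phi$; a bounded net converging pointwise converges weak* in $\mr B(G_1\times G_2)=\mr C^*(G_1\times G_2)^*$. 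Thus $\phi$ lies in the weak*-closed span in \eqref{eq:BSp}, giving $\phi\in\mr B_{\mr S_p}(G_1\times G_2)$ and closing the cycle.
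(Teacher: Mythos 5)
Your proposal is correct and follows essentially the same route as the paper: $(1)\Rightarrow(2)$ via Corollary \ref{cor:p,q-to-Sp} and the universal property of the enveloping C*-algebra, $(2)\Rightarrow(3)$ by dualizing the embedding of Lemma \ref{lem:Sq-inclusion} against the $\mr S_q$--$\mr S_p$ duality (the paper writes this as a chain of dual-space inclusions, which is exactly your concrete pairing argument), and $(3)\Rightarrow(1)$ by multiplying with the positive definite functions $\varphi_{1,t}\times\varphi_{2,t}$ and taking a weak* limit as $t\to 0^+$. Your write-up merely makes explicit some details the paper leaves terse, such as why $q>1$ is needed for the duality and why bounded pointwise convergence gives weak* convergence in $\mr B(G_1\times G_2)$.
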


\begin{proof}
	\textit{(1)} $\Rightarrow$ \textit{(2)}: Corollary \ref{cor:p,q-to-Sp}.
	
	{\emph{(2)} $\Rightarrow$ \emph{(3)}:} Lemma \ref{lem:Sq-inclusion} guarantees
	\begin{eqnarray*}
		&&\big[\mr C^*_{\mr r}(G_1){\otimes_{\mr C^*_{p,q}}} \mr C^*_{\mr r}(G_2)\big]^*\\
		&\subset& \big[\mr C^*_{\mr r}(G_1){\otimes_{p,q}} \mr C^*_{\mr r}(G_2)\big]^*\\
		&\subset& \big[\mr S_q(G_1\times G_2,\varphi_{2,t}^{-1}\times \varphi_{1,t}^{-1})\big]^*\\
		&=&\mr S_p(G_1\times G_2,\varphi_{1,t}\times \varphi_{2,t}).
	\end{eqnarray*}
	
	{\textit{(3)}} $\Rightarrow$ \textit{(1)}: Let $\varphi_t:=\varphi_{1,t}\times \varphi_{2,t}$. The hypothesis (condition \textit{(4)}) is equivalently restated as $\phi\varphi_t\in \mr S_p(G_1\times G_2)$ for all $t\in \R$. As $\phi\varphi_t$ is a positive definite function for all $t\in \R$, we deduce $\phi\varphi_t\in \mr B_{\mr S_p}(G_1\times G_2)$ for all $t\in \R$. Taking the weak* limit as $t\to 0^+$ allows us to conclude $\phi\in \mr B_{\mr S_p}(G_1\times G_2)$.
\end{proof}

\begin{cor}
	Let $2\leq p<\infty$ and $q$ be the H\"older conjugate of $p$. Suppose $G_1$ and $G_2$ are discrete groups that possess the rapid decay property with respect to conditionally negative definite length functions $\mc L_1$ and $\mc L_2$. {Then
	{$$\mr C^*_{\mr S_p}(G_1\times G_2)\cong\mr C^*_{\mr r}(G_1)\otimes_{\mr C^*_{p,q}} \mr C^*_{\mr r}(G_2)$$}
	canonically.}
\end{cor}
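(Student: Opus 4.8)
The plan is to exhibit the canonical map as a surjective $*$-homomorphism and then prove injectivity by a duality argument, using Theorem \ref{thm:group-main} to match up the positive definite functionals. Write $\Gamma = G_1\times G_2$, $A := \mr C^*_{\mr r}(G_1)\otimes_{\mr C^*_{p,q}}\mr C^*_{\mr r}(G_2)$ and $B := \mr C^*_{\mr S_p}(\Gamma)$. By Corollary \ref{cor:p,q-to-Sp}, the identity on $\mr{c_c}(\Gamma)$ extends to a contractive $*$-homomorphism $\mr C^*_{\mr r}(G_1)\otimes_{p,q}\mr C^*_{\mr r}(G_2)\to B$, and since $B$ is a C*-algebra this factors through the enveloping C*-algebra to give a $*$-homomorphism $\Phi\colon A\to B$. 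As the image of a $*$-homomorphism between C*-algebras is closed and contains the dense subalgebra $\mr{c_c}(\Gamma)$, the map $\Phi$ is surjective. It remains to show $\Phi$ is isometric.

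First I would realize both $A$ and $B$ as quotients of the full group C*-algebra $\mr C^*(\Gamma)$. For $B$ this is immediate from the Brown--Guentner construction. For $A$, note that for $f\in\mr{c_c}(\Gamma)$ one has $\|f\|_A\le \|f\|_{\mr C^*_{\mr r}(G_1)\otimes_{p,q}\mr C^*_{\mr r}(G_2)}\le \|f\|_{\ell^1(\Gamma)}$, the final inequality holding because each $\delta_{(s,t)}$ is a unitary of norm one in $\mr C^*_{\mr r}(G_1)\otimes_{p,q}\mr C^*_{\mr r}(G_2)$. Hence $\|\cdot\|_A$ is a C*-norm on $\mr{c_c}(\Gamma)$ dominated by the universal norm, so $A$ is a quotient $\mr C^*(\Gamma)/I$. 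Writing $B = \mr C^*(\Gamma)/J$, the compatibility $\Phi\circ q_A = q_B$ of quotient maps forces $I\subseteq J$, and $\Phi$ is isometric precisely when $I = J$.

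I would then pass to annihilators in $\mr B(\Gamma) = \mr C^*(\Gamma)^*$. Isometrically, $A^* = I^\perp$ and $B^* = J^\perp$ are weak*-closed subspaces with $B^*\subseteq A^*$, and $B^* = \mr B_{\mr S_p}(\Gamma)$ by \eqref{eq:BSp}. By polarization, each of $A^*$ and $B^*$ equals the weak*-closed linear span of its positive definite functions (that is, its positive functionals, viewed as functions on $\Gamma$ via $u(s,t) = \langle \delta_{(s,t)}, u\rangle$; positivity of the functional and positive definiteness of the function correspond, since positivity passes to quotients of $\mr C^*(\Gamma)$). A positive definite function lies in $A^*$ exactly when it extends to a nonnegative multiple of a state of $A$, which by the equivalence (1)$\Leftrightarrow$(2) of Theorem \ref{thm:group-main} happens exactly when it lies in $\mr B_{\mr S_p}(\Gamma) = B^*$. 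Thus $A^*$ and $B^*$ contain the same positive definite functions, so their weak*-closed spans coincide: $A^* = B^*$. Equivalently $I = {}^\perp(I^\perp) = {}^\perp(J^\perp) = J$, so $\Phi$ is an isometric $*$-isomorphism, as claimed.

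The substantive input is Theorem \ref{thm:group-main}, which we may invoke; granting it, the only real obstacle is the bookkeeping needed to upgrade the coincidence of the \emph{cones} of positive definite functionals into coincidence of the \emph{full} dual spaces. The two observations that make this routine are that $A$ and $B$ are both quotients of $\mr C^*(\Gamma)$---so their duals sit isometrically and weak*-closedly inside $\mr B(\Gamma)$---and that the dual of any C*-algebra is the closed span of its positive functionals, so that controlling positive definite functions alone suffices to pin down the entire dual subspace.
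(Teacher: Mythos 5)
Your proof is correct and follows essentially the same route as the paper, which states this corollary as an immediate consequence of Theorem \ref{thm:group-main}: the intended argument is precisely that the equivalence (1)$\Leftrightarrow$(2) identifies the positive definite functions extending to states of $\mr C^*_{\mr r}(G_1)\otimes_{\mr C^*_{p,q}}\mr C^*_{\mr r}(G_2)$ with those of $\mr C^*_{\mr S_p}(G_1\times G_2)$, and since both algebras are quotients of $\mr C^*(G_1\times G_2)$ whose duals are spanned by their positive functionals, the two C*-norms coincide. Your explicit bookkeeping (surjectivity via Corollary \ref{cor:p,q-to-Sp}, the $\ell^1$-domination showing $A$ is a quotient of the full group C*-algebra, and the bipolar argument $I={}^\perp(I^\perp)$) is exactly the detail the paper leaves implicit.
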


As a consequence of the previous theorem, we show the pairwise distinctness of the C*-algebras $\mr C^*_{\mr r}(G){\otimes_{\mr C^*_{p,q}}}\mr C^*_{\mr r}(G)$ for $p\in [2,\infty]$ when $G$ is taken from a large class of non-amenable discrete groups possessing both the Haagerup property and the rapid decay property.

\begin{cor}\label{cor:different-red-group}
	Suppose $G$ is non-amenable, discrete group that has the rapid decay property with respect to a conditionally negative definite function $\mc L$ so that $\varphi_t\colon G\to \R$ defined by $\varphi_t(s)=e^{-t\mc L(s)}$ belongs to $\bigcup_{1\leq p <\infty} \mr \ell^p(G)$ for $t\in (0,\infty)$. The canonical map
	$$ \mr C^*_{\mr r}(G){\otimes_{\mr C^*_{p,q}}}\mr C^*_{\mr r}(G)\to \mr C^*_{\mr r}(G){\otimes_{\mr C^*_{p',q'}}}\mr C^*_{\mr r}(G)$$
	is not injective for $2\leq p'<p\leq \infty$.
\end{cor}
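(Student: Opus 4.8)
The plan is to turn the statement into a question about Fourier--Stieltjes ideals and then to separate these ideals by a single, completely explicit positive definite function supported on the diagonal.

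First I would record the duality. By the previous corollary we have canonical identifications $\mr C^*_{\mr r}(G)\otimes_{\mr C^*_{p,q}}\mr C^*_{\mr r}(G)\cong\mr C^*_{\mr S_p}(G\times G)$ and $\mr C^*_{\mr r}(G)\otimes_{\mr C^*_{p',q'}}\mr C^*_{\mr r}(G)\cong\mr C^*_{\mr S_{p'}}(G\times G)$, under which the canonical map becomes the quotient $*$-homomorphism $\mr C^*_{\mr S_p}(G\times G)\to\mr C^*_{\mr S_{p'}}(G\times G)$ extending the identity on $\mr{c_c}(G\times G)$ (surjectivity is automatic from density). A surjective $*$-homomorphism of C*-algebras is injective if and only if it is isometric; dualizing and using $\mr C^*_{\mr S_p}(G\times G)^*=\mr B_{\mr S_p}(G\times G)$ from \eqref{eq:BSp}, this is equivalent to the inclusion $\mr B_{\mr S_{p'}}(G\times G)\subseteq\mr B_{\mr S_p}(G\times G)$ being onto. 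Hence it suffices to exhibit a positive definite $\phi\in\mr B_{\mr S_p}(G\times G)\setminus\mr B_{\mr S_{p'}}(G\times G)$. For $p=\infty$ it is enough to find such a $\phi$ in $\mr B_{\mr S_{p_0}}(G\times G)\setminus\mr B_{\mr S_{p'}}(G\times G)$ for some finite $p_0\in(p',\infty)$, since $\mr B_{\mr S_{p_0}}\subseteq\mr B_{\mr S_\infty}$; so we may assume $2\le p<\infty$ and invoke Theorem \ref{thm:group-main}.

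Next I would build $\phi$ on the diagonal. Writing $\varphi_s(g)=e^{-s\mc L(g)}$, conditional negative definiteness of $\mc L$ and Schoenberg's theorem give $\varphi_s\in\mr P(G)$ for every $s>0$. Let $\Delta=\{(g,g):g\in G\}\le G\times G$ and let $\phi$ be the trivial extension to $G\times G$ of the function $(g,g)\mapsto\varphi_s(g)$ on $\Delta\cong G$; since the trivial extension of a positive definite function on a subgroup is again positive definite, $\phi\in\mr P(G\times G)$. The virtue of this choice is that the weighted function occurring in condition (3) of Theorem \ref{thm:group-main} becomes diagonal: $\phi\cdot(\varphi_t\times\varphi_t)$ is supported on $\Delta$ and equals $\varphi_{s+2t}(g)$ there, so as an operator in $\mr S_p(\overline{\ell^2(G)},\ell^2(G))$ it is diagonal with singular values $\{\varphi_{s+2t}(g)\}_{g\in G}$. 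Thus $\phi\cdot(\varphi_t\times\varphi_t)\in\mr S_p(G\times G)$ if and only if $\sum_g e^{-p(s+2t)\mc L(g)}<\infty$, i.e. if and only if $\varphi_{p(s+2t)}\in\ell^1(G)$, and likewise with $p$ replaced by $p'$.

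Finally I would tune $s$ via the critical exponent $u_0:=\inf\{u>0:\varphi_u\in\ell^1(G)\}$. The hypothesis $\varphi_t\in\bigcup_{1\le r<\infty}\ell^r(G)$ forces $u_0<\infty$ (if $\varphi_{t_*}\in\ell^{r_*}$ then $\varphi_{t_*r_*}\in\ell^1$), which in particular makes every level set of $\mc L$ finite. Non-amenability of $G$ forces $u_0>0$: if $u_0=0$, then every $\varphi_u$ with $u>0$ is a positive definite $\ell^2$-function, hence a coefficient of $\lambda$ by Godement's theorem, and $\varphi_u\to\mathbf 1$ pointwise as $u\downarrow 0$ would weakly contain the trivial representation in $\lambda$, making $G$ amenable. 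With $0<u_0<\infty$ and $p'<p$, the interval $[u_0/p,\,u_0/p')$ is nonempty; choosing $s$ in it, the criterion above yields $\phi\cdot(\varphi_t\times\varphi_t)\in\mr S_p$ for all $t>0$ (as $p(s+2t)>u_0$) but $\phi\cdot(\varphi_{t_0}\times\varphi_{t_0})\notin\mr S_{p'}$ for small $t_0>0$ (as $p's<u_0$). By Theorem \ref{thm:group-main} this says exactly $\phi\in\mr B_{\mr S_p}(G\times G)\setminus\mr B_{\mr S_{p'}}(G\times G)$, finishing the proof. I expect the genuine obstacles to be the predual bookkeeping in the first paragraph, so that injectivity of the quotient map is faithfully translated into equality of the two ideals, and the amenability dichotomy $u_0>0$ in the last paragraph; the diagonal computation itself is routine once these are in place, and it is precisely here that non-amenability and the $\ell^p$-decay of $\varphi_t$ are used.
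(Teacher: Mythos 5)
Your proposal is correct and follows essentially the same route as the paper's proof: the diagonal positive definite functions $\widetilde{\varphi}_s$, the critical exponent $t_0=\inf\{t>0:\varphi_t\in\ell^1(G)\}$, the identity $\widetilde{\varphi}_s\cdot(\varphi_t\times\varphi_t)=\widetilde{\varphi}_{s+2t}$, and Theorem \ref{thm:group-main} to produce an element of $\mr B_{\mr S_p}(G\times G)\setminus \mr B_{\mr S_{p'}}(G\times G)$. If anything, your write-up is more complete than the paper's, since you make explicit two points the paper leaves implicit: that non-amenability is what forces $t_0>0$ (via Godement's theorem and weak containment of the trivial representation), and that the case $p=\infty$ must be reduced to a finite exponent $p_0\in(p',\infty)$ because Theorem \ref{thm:group-main} only applies for $2\le p<\infty$.
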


\begin{proof}
	Let
	$$t_0=\inf\{ t>0 : \varphi_t\in \ell^1(G)\}.$$
	Then $t_0>0$ and if $p>0$, then $\varphi_{t}\in \ell^p(G)$ for $t>\frac{t_0}{p}$ and $\varphi_{t}\not\in \ell^p(G)$ for $0<t<\frac{t_0}{p}$.
	Defining $\widetilde{\varphi}_t\colon G\times G\to \C$ to be given by
	$$ \widetilde{\varphi}_t(s_1,s_2)=\left\{\begin{array}{c l}
	\varphi_t(s_1), & \text{if }s_1=s_2\\
	0 & \text{if }s_1\neq s_2
	\end{array}\right. ,$$
	we deduce that $\widetilde{\varphi_t}\in \mr S_p(G_1\times G_2)$ if $t>\frac{t_0}{p}$, and $\widetilde{\varphi_t}\not\in \mr S_p(G_1\times G_2)$ if $0<t<\frac{t_0}{p}$. Observing that $\widetilde{\varphi_t}$ is a positive definite function and
	$$\widetilde{\varphi_t}(\varphi_{t'}\times \varphi_{t'})=\widetilde{\varphi_{t+2t'}}$$
	for $t,t'>0$, it follows from Theorem \ref{thm:group-main} that
	$$ \widetilde{\varphi_t}\in \mr B_{\mr S_p}(G_1\times G_2)$$
	if and only if $t\geq \frac{t_0}{p}$.
\end{proof}

There are many interesting discrete groups that satisfy the hypotheses of Corollary \ref{cor:different-red-group}. These examples are known to experts and have implicitly appeared elsewhere in the literature.
\begin{example}[see {\cite{SW-exotic}}]\label{ex:groups}\
	\begin{enumerate}
		\item If $G$ is either a finitely generated free group or a finitely generated Coxeter group, then the canonical word length function {$|\cdot |$} on $G$ is conditionally negative definite and the function $\varphi_t\colon G\to \R$ defined by $\varphi_t(s)=e^{-t|s|}$ belongs to $\bigcup_{1\leq p <\infty}\ell^p(G)$ for every $t>0$.
		\item Suppose $G$ is a discrete group acting properly and cocompactly by isometries on a CAT(0) cube complex $X$ with the isolated flats property. Fix a vertex $x_0$ of $X$ and consider the combinatial distance $d_c$ on the $1$-skeleton of $X$. The function $\mc L\colon G\to [0,\infty)$ given by $\mc L(s)=d_c(sx_0,x_0)$ is a conditionally negative definite length function such that the function $\varphi_t\colon G\to \R$ given by $\varphi_t(s)=e^{-t\mc L(s)}$ belongs to $\bigcup_{1\leq p <\infty}\ell^p(G)$ for every $t>0$.
	\end{enumerate}
\end{example}

\section{Failure of projectivity and injectivity for enveloping C*-algebras}

It is well known that the minimal tensor product $\otimes_{\min}$ is injective in the sense that if $A$ and $B$ are C*-algebras and $C$ is a C*-subalgebra of $A$, then $C\otimes_{\min} B$ is a C*-subalgebra of $A\otimes_{\min}B$, and the maximal tensor product $\otimes_{\max}$ is projective in the sense that if $A$ and $B$ are C*-algebras and $J$ is a two-sided closed ideal of $A$, then $(A\otimes_{\max} B)/(J\otimes_{\max}B)$ is canonically $*$-isomorphic to $(A/J)\otimes_{\max} B$. It is natural to wonder whether the tensor product functor {$\otimes_{\mr C^*_{p,q}}$} is either injective or projective for H\"older conjugate $p,q\in [1,\infty]$ with $p\neq 1,2,\infty$. Using the results from the previous section along with the fact that the free group $\F_2$ possesses the factorization property, we can definitively show that this is not the case.

Recall first that a discrete group $G$ has the \emph{factorization property} (see \cite[Section 6.4]{BO}) if the representation $\lambda\cdot\rho \colon G\times G\to \mr B(\ell^2(G))$ given by $(\lambda\cdot \rho)(s,t)=\lambda(s)\rho(t)$, where $\rho$ denotes the right regular representation of $G$, extends to a $*$-representation of $\mr C^*(G)\otimes_{\min} \mr C^*(G)$. Since the characteristic function $1_\Delta$ of the diagonal subgroup $\Delta=\text{diag}(G\times G)$ is a positive definite function of $G\times G$ with GNS representation $\lambda\cdot \rho$, possessing the factorization property is equivalent to requiring that $1_\Delta$ extends to a state on $\mr C^*(G)\otimes_{\min} \mr C^*(G)$.

\begin{thm}\
	\begin{enumerate}
		\item If $2<p\leq\infty$ and $q$ is the H\"older conjugate of $p$, then the canonical map $\mr C^*(\F_2){\otimes_{\mr C^*_{p,q}}}\mr C^*_{\mr r}(\F_2)\to \mr C^*(\F_2){\otimes_{\mr C^*_{p,q}}}\mr B(\ell^2(\F_2))$ is not injective.
		\item Suppose $2\leq p<\infty$, $q\colon \mr C^*(\F_2)\to \mr C^*_{\mr r}(\F_2)$ is the canonical map, and $J$ is the norm closure of $(\mr C^*(\F_2)\odot \ker q)+(\ker q\odot \mr C^*(\F_2))$ inside of $\mr C^*(\F_2){\otimes_{\mr C^*_{p,q}}}\mr C^*(\F_2)$. The canonical map $[\mr C^*(\F_2){\otimes_{\mr C^*_{p,q}}}\mr C^*(\F_2)]/J\to \mr C^*_{\mr r}(\F_2){\otimes_{\mr C^*_{p,q}}}\mr C^*_{\mr r}(\F_2)$ is not injective.
	\end{enumerate}
\end{thm}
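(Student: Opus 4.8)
The plan is to prove both the failure of injectivity (1) and of projectivity (2) by exhibiting one positive definite function --- the indicator $1_\Del$ of the diagonal subgroup $\Del=\mathrm{diag}(\F_2\times\F_2)$ --- that is a state of one completion of $\C[\F_2\times\F_2]$ but not of another. The two driving inputs are the factorization property of $\F_2$ (which makes $1_\Del$ a state of various ``full'' completions, its Gelfand--Naimark--Segal representation being $\lambda\cdot\rho$) and the Schatten-class criterion of Theorem \ref{thm:group-main}, which identifies the states of $\mr C^*_{\mr r}(\F_2)\otimes_{\mr C^*_{p,q}}\mr C^*_{\mr r}(\F_2)\cong\mr C^*_{\mr S_p}(\F_2\times\F_2)$ with the positive definite functions in $\mr B_{\mr S_p}(\F_2\times\F_2)$. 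The key arithmetic is that $1_\Del\notin\mr B_{\mr S_p}(\F_2\times\F_2)$ for $2\le p<\infty$: by Theorem \ref{thm:group-main} this membership would force $1_\Del\cdot(\varphi_t\times\varphi_t)\in\mr S_p(\F_2\times\F_2)$ for every $t>0$, but $1_\Del\cdot(\varphi_t\times\varphi_t)$ is the diagonal operator with entries $e^{-2t|s|}$, whose $\mr S_p$-norm is finite only when $\sum_{s}e^{-2tp|s|}<\infty$, and this fails for small $t>0$ because $\F_2$ has exponential growth.

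For part (2) I would first verify that $1_\Del$ is a state of $\mr C^*(\F_2)\otimes_{\mr C^*_{p,q}}\mr C^*(\F_2)$. The factorization property gives that $1_\Del$ is a state of $\mr C^*(\F_2)\otimes_{\min}\mr C^*(\F_2)$; since the minimal C*-norm arises from a $*$-representation of $\mr C^*(\F_2)\opq\mr C^*(\F_2)$ (Remark \ref{rem:*-semisimple}), it is dominated by the enveloping C*-norm, so $1_\Del$ stays continuous and positive for the larger norm. Next, because the GNS representation of $1_\Del$ is $\lambda\cdot\rho$, whose two legs $\lambda$ and $\rho$ each factor through $\mr C^*_{\mr r}(\F_2)$, the functional $1_\Del$ annihilates both $(\ker q)\odot\mr C^*(\F_2)$ and $\mr C^*(\F_2)\odot(\ker q)$, hence all of $J$, and so descends to a state of $[\mr C^*(\F_2)\otimes_{\mr C^*_{p,q}}\mr C^*(\F_2)]/J$. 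If the canonical map of part (2) were injective it would be a $*$-isomorphism onto $\mr C^*_{\mr r}(\F_2)\otimes_{\mr C^*_{p,q}}\mr C^*_{\mr r}(\F_2)$, and pulling $1_\Del$ back through its inverse would exhibit $1_\Del$ as a state of $\mr C^*_{\mr S_p}(\F_2\times\F_2)$, i.e.\ $1_\Del\in\mr B_{\mr S_p}(\F_2\times\F_2)$, contradicting the previous paragraph.

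For part (1) the witness is again $1_\Del$, now seen through the commuting regular representations. Since $\lambda$ and $\rho$ commute and both factor through $\mr C^*_{\mr r}(\F_2)$, the pair $(\lambda,\rho)$ yields a representation of $\mr C^*(\F_2)\otimes_{\max}\mr C^*_{\mr r}(\F_2)$ whose vector state at $\delta_e$ sends $u_s\otimes\lambda(t)$ to $\delta_{s,t}$, so $1_\Del$ is a state there and, the $\mr C^*_{p,q}$-norm being dominated by the maximal norm, also of $\mr C^*(\F_2)\otimes_{\mr C^*_{p,q}}\mr C^*_{\mr r}(\F_2)$. If the map of part (1) were injective it would be isometric, so by the state-extension theorem $1_\Del$ would extend to a state $\Psi$ of $\mr C^*(\F_2)\otimes_{\mr C^*_{p,q}}\mr B(\ell^2(\F_2))$ with $\Psi(u_s\otimes\lambda(t))=\delta_{s,t}$. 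Writing the GNS representation of $\Psi$ as a pair of commuting representations $(\pi,\mu)$ of $\mr C^*(\F_2)$ and $\mr B(\ell^2(\F_2))$ with cyclic vector $\xi$, the identities $\Psi(u_s\otimes\lambda(s))=1$ force $\pi(u_s)\mu(\lambda(s))\xi=\xi$ for all $s$, so $\xi$ is a unit vector fixed by the unitary representation $s\mapsto\pi(u_s)\mu(\lambda(s))$ of $\F_2$. On the summand of $\mu$ that is nondegenerate on the compact ideal $\mr K(\ell^2(\F_2))$ one has $\mu(\lambda(s))=\lambda(s)\otimes 1$ and $\pi(u_s)=1\otimes w_s$ (the commutant of $\mr B(\ell^2(\F_2))\otimes 1$), so $\pi(u_s)\mu(\lambda(s))=\lambda(s)\otimes w_s$, which by Fell's absorption principle is a multiple of the left regular representation of $\F_2$ --- having no nonzero invariant vectors since $\F_2$ is non-amenable, a contradiction.

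The hard part is the last step of part (1): controlling the summand of $\mu$ that kills $\mr K(\ell^2(\F_2))$ and factors through the Calkin algebra, where Fell absorption does not apply and $\xi$ could a priori have an invariant component. This is exactly where the hypothesis $2<p\le\infty$ (in particular $p\neq 2$) and the Schatten nature of $\otimes_{\mr C^*_{p,q}}$ must enter: the norm is built from the action on $\mr S_p(\overline{\ell^2},\mc H)$, and I expect that for $p<\infty$ the continuity constraint forces the relevant matrix coefficients to be Schatten-$p$, so the Calkin summand cannot carry the non-compact functional $1_\Del$, while $p=\infty$ reduces to the classical Wassermann argument that $\otimes_{\max}$ is not injective. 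Making this reduction to the compact ideal precise --- rather than the Fell absorption step, which is routine --- is where I would concentrate the effort.
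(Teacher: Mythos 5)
Your part (2) is correct and is essentially the paper's own argument: the same witness $1_\Delta$, made into a state of $\mr C^*(\F_2)\otimes_{\mr C^*_{p,q}}\mr C^*(\F_2)$ via the factorization property together with the correct norm comparison $\|\cdot\|_{\min}\leq\|\cdot\|_{\mr C^*_{p,q}}$, annihilating $J$ because both legs of $\lambda\cdot\rho$ factor through $\mr C^*_{\mr r}(\F_2)$, and then contradicting Theorem \ref{thm:group-main} because $1_\Delta(\varphi_t\times\varphi_t)=\widetilde{\varphi_{2t}}\notin \mr S_p(\F_2\times\F_2)$ for small $t>0$.

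Part (1), however, has a genuine gap, and it occurs before the step you flagged. Your opening claim --- that $1_\Delta$, being a state of $\mr C^*(\F_2)\otimes_{\max}\mr C^*_{\mr r}(\F_2)$, is also a state of $\mr C^*(\F_2)\otimes_{\mr C^*_{p,q}}\mr C^*_{\mr r}(\F_2)$ because ``the $\mr C^*_{p,q}$-norm is dominated by the maximal norm'' --- runs the domination the wrong way. Since $\|\cdot\|_{\mr C^*_{p,q}}\leq\|\cdot\|_{\max}$, a functional bounded by $\|\cdot\|_{\mr C^*_{p,q}}$ is bounded by $\|\cdot\|_{\max}$, so states of the $\mr C^*_{p,q}$-completion pull back to states of the max-completion, not conversely (you used the correct direction, $\min\leq \mr C^*_{p,q}$, in part (2)). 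The claim is not a harmless slip: at $p=2$, where $\otimes_{\mr C^*_{2,2}}=\otimes_{\min}$, the function $1_\Delta$ is famously \emph{not} a state of $\mr C^*(\F_2)\otimes_{\min}\mr C^*_{\mr r}(\F_2)$ (Fell absorption plus non-amenability), so any valid justification must use $p>2$ essentially, and yours does not; moreover your own part (2) shows $1_\Delta$ fails to be $\mr C^*_{p,q}$-continuous on $\mr C^*_{\mr r}(\F_2)\odot\mr C^*_{\mr r}(\F_2)$ for $p<\infty$. This is exactly why the paper uses a different witness: choose $t_0$ with $\varphi_{t_0}\in\ell^p(\F_2)\setminus\ell^2(\F_2)$ (possible precisely because $p>2$) and let $\psi$ be the positive definite function supported on $\Delta$ with $\psi(s,s)=\varphi_{t_0}(s)$. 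Then $\psi\in\mr P(\F_2\times\F_2)\cap\mr S_p(\F_2\times\F_2)$, so Theorem \ref{thm:group-main} makes it a state of $\mr C^*_{\mr r}(\F_2)\otimes_{\mr C^*_{p,q}}\mr C^*_{\mr r}(\F_2)$, which pulls back through the canonical quotient to a state of $\mr C^*(\F_2)\otimes_{\mr C^*_{p,q}}\mr C^*_{\mr r}(\F_2)$.

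The second defect is the one you admit: the Calkin summand. The paper's proof shows this analysis is avoidable altogether, and I see no way to complete it with the tools available (Theorem \ref{thm:group-main} requires both factors to be reduced group C*-algebras of rapid decay groups, so it says nothing about representations of $\mr C^*(\F_2)\otimes_{\mr C^*_{p,q}}\mr B(\ell^2(\F_2))$ that kill the compacts). The paper instead invokes $\mr C^*(\F_2)\otimes_{\max}\mr B(\ell^2(\F_2))\cong\mr C^*(\F_2)\otimes_{\min}\mr B(\ell^2(\F_2))$ \cite[Theorem 13.2.1]{BO}; since $\min\leq\mr C^*_{p,q}\leq\max$, the middle norm is pinched, so $\mr C^*(\F_2)\otimes_{\mr C^*_{p,q}}\mr B(\ell^2(\F_2))$ \emph{is} the minimal tensor product. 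By injectivity of $\otimes_{\min}$, injectivity of the map in (1) would force the $\mr C^*_{p,q}$-norm on $\mr C^*(\F_2)\odot\mr C^*_{\mr r}(\F_2)$ to coincide with the min norm. But $\psi$ is not min-continuous: on elements supported on $\Delta$ the min norm equals the $\mr C^*_{\mr r}(\Delta)$-norm (Fell absorption applied to $\pi_u\otimes\lambda$), and $\psi|_\Delta=\varphi_{t_0}\notin\ell^2(\F_2)$ does not extend to a state of $\mr C^*_{\mr r}(\Delta)$ by \cite[Corollary 3.5]{Okayasu}. This yields the contradiction with no GNS decomposition and no case analysis on representations of $\mr B(\ell^2(\F_2))$. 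In short: repair part (1) by replacing $1_\Delta$ with $\psi$ and replacing your representation-theoretic analysis of the codomain with the max $=$ min identification for the pair $(\mr C^*(\F_2),\mr B(\ell^2(\F_2)))$.
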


\begin{proof}
	\emph{(1)}: Since $\mr C^*(\F_2)\otimes_{\max}\mr B(\ell^2(\F_2))\cong\mr C^*(\F_2)\otimes_{\min}\mr B(\ell^2(\F_2))$ canonically (see \cite[Theorem 13.2.1]{BO}), it suffices to show that the canonical map $\mr C^*(\F_2)\otimes_{\mr C^*_{p,q}}\mr C^*_{\mr r}(\F_2)\to \mr C^*(\F_2)\otimes_{\min}\mr C^*_{\mr r}(\F_2)$ is not injective.
	
	For each $t>0$, let $\varphi_t\colon \F_2\to \F_2$ be the positive definite function given by $\varphi_t(s)=e^{-t|s|}$ for $s\in \F_2$. Then $\varphi_t\in \ell^p(\F_2)$ if and only if $t>3^{-1/p}$. Choose $t_0$ such that $3^{-1/p}<t_0<3^{-1/2}$ and consider the positive definite function $\psi\colon \F_2\times \F_2\to \C$ given by $\psi(s_1,s_2)=0$ if $s_1\neq s_2$, and $\psi(s,s)=\varphi_{t_0}(s)$. Then $\psi\in \mr S_p(\F_2\times \F_2)$ since $\psi$ is $\ell^p$-summable and supported on $\text{diag}(\F_2\times \F_2)$. Hence, $\psi\in \mr B_{\mr S_p}(\F_2\times \F_2)$. Since $\mr C^*(\F_2){\otimes_{\mr C^*_{p,q}}}\mr C^*_{\mr r}(\F_2)$ quotients onto $\mr C^*_{\mr r}(\F_2){\otimes_{\mr C^*_{p,q}}}\mr C^*_{\mr r}(\F_2)$, we deduce that $\psi$ extends to a state on $\mr C^*(\F_2){\otimes_{\mr C^*_{p,q}}}\mr C^*_{\mr r}(\F_2)$ by Theorem \ref{thm:group-main}. We next show that $\psi$ does not extend to a state on $\mr C^*(\F_2)\otimes_{\min}\mr C^*_{\mr r}(\F_2)$.
	
	Let $\Delta=\mr{diag}(\F_2\times \F_2)$, and $\pi_u$ be the universal representation of $\F_2$. If $x\in \mr{c_c}(\Delta)\subset \mr{c_c}(\F_2\times \F_2)$, then
	$$\|x\|_{\mr C^*(\F_2)\otimes_{\min}\mr C^*_{\mr r}(\F_2)}=\|(\pi_u\times \lambda)(x)\|=\|\lambda(x)\|=\|x\|_{\mr C^*_{\mr r}(\Delta)},$$
	since $(\pi_u\times \lambda)|_\Delta=\pi_u\otimes \lambda$ is unitarily equivalent to an amplification of the left regular representation by the Fell absorption principle. It follows from \cite[Corollay 3.5]{Okayasu} that the positive definite function $\psi|_\Delta$ does not extend to a state of $\mr C^*_{\mr r}(\Delta)$ and, hence, that $\psi$ does not extend to a state of $\mr C^*(\F_2)\otimes_{\min}\mr C^*_{\mr r}(\F_2)$.
	
	\emph{(2)}: Since $\F_2$ has the factorization property, the representation $\lambda\cdot \rho\colon \F_2\times \F_2\to \mr B(\ell^2(\F_2))$ extends to a C*-representation of $\mr C^*(\F_2)\otimes_{\min}\mr C^*(\F_2)$ and, hence, also of $\mr C^*(\F_2){\otimes_{\mr C^*_{p,q}}}\mr C^*(\F_2)$. As $\lambda\cdot \rho|_{\F_2\times \{e\}}$ is the left regular representation of $\F_2$ and $\lambda\cdot \rho|_{\{e\}\times \F_2}$ is unitarily equivalent to the left regular representation of $\F_2$, we deduce that the kernel of $\lambda\cdot \rho$ contains $J$ when $\lambda\cdot \rho$ is viewed as a $*$-representation of $\mr C^*(\F_2){\otimes_{\mr C^*_{p,q}}}\mr C^*(\F_2)$. Hence, $1_\Delta$ extends to a state of $[\mr C^*(\F_2)\otimes\mr C^*(\F_2)]/J$. However, $1_\Delta$ does not extend to a state of $\mr C_{\mr r}^*(\F_2){\otimes_{\mr C^*_{p,q}}}\mr C^*_{\mr r}(\F_2)$ by Theorem \ref{thm:group-main} since $1_\Delta(\varphi_t\times \varphi_t)\not\in \mr S_p(\F_2\times \F_2)$ for $0<t<\frac{3^{-1/p}}{2}$.
\end{proof}

\begin{rem}
	The $2^{\aleph_0}$ tensor product functors constructed by Ozawa and Pisier in \cite{OP} are each injective. Hence, if $p,q\in [1,\infty]$ are H\"older conjugate with $p\neq 2$, then the tensor product functor ${\otimes_{\mr C^*_{p,q}}}$ differs from each of the Ozawa-Pisier tensor product functors.
\end{rem}

\section{Rigidly symmetric C*-algebras}

In this section, we prove the converse to Kugler's theorem stating that every type I C*-algebra is rigidly symmetric. Our proof relies heavily on the constructions we introduced in this paper and uses the results on tensor products of discrete group C*-algebras from Section 5. Another key ingredient in our proof will be the concept of spectral interpolation introduced in \cite{SW-Hermitian}, which we recall below.


Generalizing the concept of a symmetric $*$-algebra, a $*$-subalgebra $S$ of a $*$-algebra $A$ is \emph{quasi-symmetric in $A$} if $\mr{Sp}_A(a^*a)\subset [0,\infty)$ for all $a\in S$. We will be most interested in the case when $A$ is a Banach $*$-algebra and $S$ is a dense $*$-subalgebra of $A$, but it will be convenient at times for us to consider these concepts more generally.

Suppose $A,B,C$ are Banach $*$-algebras with continuous, dense inclusions $A\subset B\subset C$. The triple $(A,B,C)$ is a \emph{spectral interpolation of triple Banach $*$-algebras} with respect to a dense $*$-subalgebra $S\subset A$ if there exists $\theta\in (0,1)$ so that the spectral radius relation
$$ \mr r_B(a)\leq \mr r_A(a)^{1-\theta}\mr r_C(a)^\theta$$
holds for every $a=a^*\in S$. {Here, $\mr r_B$ refers to the spectral radius with respect to the algebra $B$.}

The primary example of a spectral interpolation of Banach $*$-algebras that we will consider in this paper is given by the following proposition.

\begin{prop}
	{Suppose $A$ and $B$ are C*-algebras and $1\leq p_0< p_1<p_2\leq 2$. Let $q_0,q_1,q_2$ be the H\"older conjugates of $p_0,p_1,p_2$, respectively. The triple $(A\otimes_{p_0,q_0}B,A\otimes_{p_1,q_1}B,A\otimes_{p_2,q_2}B)$ is a spectral interpolation of triple Banach $*$-algebras with respect to $A\odot B$.}
\end{prop}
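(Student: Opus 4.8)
The plan is to check the two clauses of the definition in turn: the chain of continuous dense inclusions, and the interpolated spectral radius inequality on self-adjoint elements of $A\odot B$.

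For the inclusions I would first record that $p_0<p_1<p_2\le 2$ forces $q_0>q_1>q_2\ge 2$, so each index satisfies $p_i<2<q_i$ for $i=0,1$, and in particular $p_0<p_1<q_0$ and $p_1<p_2<q_1$. Two applications of Proposition \ref{prop:inclusions} then show that the identity on $A\odot B$ extends to injective contractions
$$A\otimes_{p_0,q_0}B\longrightarrow A\otimes_{p_1,q_1}B\longrightarrow A\otimes_{p_2,q_2}B,$$
and density of $A\odot B$ in each algebra is automatic from the construction. This furnishes the continuous dense chain required by the definition, with $S=A\odot B$ the common dense $*$-subalgebra.

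The heart of the argument is the spectral radius estimate, and the key point I would exploit is that symmetrization is invisible on self-adjoint elements. By Corollary \ref{cor:*-isometric}(2), for any H\"older conjugate pair $p,q$ one has $\|x^*\|_{A\opt B}=\|x\|_{A\otimes_q B}$ for $x\in A\odot B$; applied to a self-adjoint $y=y^*$ this yields $\|y\|_{A\otimes_{p_i}B}=\|y\|_{A\otimes_{q_i}B}$, and therefore $\|y\|_{A\otimes_{p_i,q_i}B}=\max\{\|y\|_{A\otimes_{p_i}B},\|y\|_{A\otimes_{q_i}B}\}=\|y\|_{A\otimes_{p_i}B}$. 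Since every power of a self-adjoint $a=a^*\in A\odot B$ is again self-adjoint, it follows that $\|a^n\|_{A\otimes_{p_i,q_i}B}=\|a^n\|_{A\otimes_{p_i}B}$ for all $n$; as the products $a^n$ agree across all cross-norm completions, the spectral radius formula $\mr r(a)=\lim_n\|a^n\|^{1/n}$ then gives $\mr r_{A\otimes_{p_i,q_i}B}(a)=\mr r_{A\otimes_{p_i}B}(a)$ for $i=0,1,2$. This reduces the whole problem to an interpolation statement about the non-symmetrized norms.

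To finish I would choose the unique $\theta\in(0,1)$ with $\tfrac{1}{p_1}=\tfrac{1-\theta}{p_0}+\tfrac{\theta}{p_2}$ (which exists since $p_0<p_1<p_2$) and apply the abstract C*-algebra form of the inequality in Remark \ref{rem:norm-relation}, with endpoints $p_0,p_2$, to each $a^n$:
$$\|a^n\|_{A\otimes_{p_1}B}\le\|a^n\|_{A\otimes_{p_0}B}^{1-\theta}\,\|a^n\|_{A\otimes_{p_2}B}^{\theta}.$$
Taking $n$-th roots and letting $n\to\infty$ gives $\mr r_{A\otimes_{p_1}B}(a)\le \mr r_{A\otimes_{p_0}B}(a)^{1-\theta}\mr r_{A\otimes_{p_2}B}(a)^{\theta}$, and substituting the identifications of the previous paragraph produces exactly $\mr r_{A\otimes_{p_1,q_1}B}(a)\le \mr r_{A\otimes_{p_0,q_0}B}(a)^{1-\theta}\mr r_{A\otimes_{p_2,q_2}B}(a)^{\theta}$ for all $a=a^*\in A\odot B$. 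I expect no serious obstacle: the argument is essentially bookkeeping around Remark \ref{rem:norm-relation}. The one delicate step is the self-adjoint reduction, which is precisely what collapses the max-defined norm $\|\cdot\|_{A\otimes_{p_i,q_i}B}$ down to the single norm $\|\cdot\|_{A\otimes_{p_i}B}$; without restricting to self-adjoint elements, the interpolation inequality of Remark \ref{rem:norm-relation}, being stated for $\opt$ rather than $\opq$, would not apply directly.
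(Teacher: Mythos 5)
Your proposal is correct and takes essentially the same route as the paper, whose entire proof is the one-line citation of Remark \ref{rem:norm-relation} together with the spectral radius formula; your argument supplies exactly the bookkeeping (the inclusions via Proposition \ref{prop:inclusions}, interpolation of the norms of powers $a^n$, and passage to the limit) that the paper leaves implicit. Your use of Corollary \ref{cor:*-isometric}(2) to collapse the max-norm $\|\cdot\|_{A\otimes_{p_i,q_i}B}$ to $\|\cdot\|_{A\otimes_{p_i}B}$ on self-adjoint elements is a clean way to bridge the gap between $\otimes_{p}$ and $\otimes_{p,q}$; an equally direct alternative is to apply Remark \ref{rem:norm-relation} to both families of exponents, since $\tfrac{1}{p_1}=\tfrac{1-\theta}{p_0}+\tfrac{\theta}{p_2}$ automatically gives $\tfrac{1}{q_1}=\tfrac{1-\theta}{q_0}+\tfrac{\theta}{q_2}$, and then take maxima, which yields the multiplicative norm inequality (hence the spectral-radius inequality) for \emph{all} elements of $A\odot B$, not only self-adjoint ones.
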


\begin{proof}
	This follows from Remark \ref{rem:norm-relation} and the spectral radius formula.
\end{proof}

We will require the following result proven by the second and third author.

\begin{thm}[{S.-W. \cite[Theorem 3.4]{SW-exotic}}]\label{thm:SW}
	Suppose $(A,B,C)$ is a spectral interpolation of $*$-semisimple Banach $*$-algebras with respect to $S\subset A$. If $S$ is quasi-symmetric in $A$, then $\mr r_B(x)=\mr r_C(x)$ for all self adjoint $x\in S$.
\end{thm}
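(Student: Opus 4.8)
The plan is to establish the one nontrivial inequality $\mr r_B(x)\le \mr r_C(x)$ for self-adjoint $x\in S$; since the continuous inclusions $A\subset B\subset C$ shrink spectra (an inverse computed in a smaller algebra survives in a larger one), we automatically have $\mr{Sp}_C(x)\subseteq \mr{Sp}_B(x)\subseteq \mr{Sp}_A(x)$ and hence $\mr r_C(x)\le \mr r_B(x)\le \mr r_A(x)$, so the reverse inequality gives the desired equality. The single piece of structural information I would extract from quasi-symmetry is that these spectra are \emph{real}: for self-adjoint $x\in S$ we have $x^2=x^*x\in S$, whence $\mr{Sp}_A(x^2)\subseteq[0,\infty)$, and the polynomial spectral mapping theorem ($\mr{Sp}_A(x^2)=\{\mu^2:\mu\in\mr{Sp}_A(x)\}$) forces $\mr{Sp}_A(x)\subseteq\R$. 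Consequently all three spectra sit inside the compact real interval $[-\mr r_A(x),\mr r_A(x)]$.

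Assume toward a contradiction that $\mr r_B(x)>\mr r_C(x)$. I would select $\lambda\in\mr{Sp}_B(x)$ with $|\lambda|=\mr r_B(x)$; by the previous step $\lambda$ is real, lies in $[-\mr r_A(x),\mr r_A(x)]$, and has strictly positive distance from the interval $[-\mr r_C(x),\mr r_C(x)]\supseteq\mr{Sp}_C(x)$. The core of the argument is a separation step: approximating (Weierstrass) a continuous bump function that equals $1$ at $\lambda$, vanishes on $[-\mr r_C(x),\mr r_C(x)]$, and is bounded by $1$ on $[-\mr r_A(x),\mr r_A(x)]$, I produce a real polynomial $p$ with $p(0)=0$ (so that $p(x)\in S$ even when the algebras are nonunital), satisfying $|p|\le 1+\delta$ on $[-\mr r_A(x),\mr r_A(x)]$, $|p(\lambda)|\ge 1-\delta$, and $|p|\le\delta$ on $[-\mr r_C(x),\mr r_C(x)]$. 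Then $p(x)$ is a self-adjoint element of $S$, and polynomial spectral mapping in each algebra gives $\mr r_A(p(x))\le 1+\delta$, $\mr r_B(p(x))\ge |p(\lambda)|\ge 1-\delta$, and $\mr r_C(p(x))\le\delta$.

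Finally I would feed the self-adjoint element $p(x)$ into the defining inequality of the spectral-interpolation triple:
$$
\mr r_B(p(x))\le \mr r_A(p(x))^{1-\theta}\,\mr r_C(p(x))^{\theta},
$$
which yields $1-\delta\le (1+\delta)^{1-\theta}\delta^{\theta}$. With $\theta\in(0,1)$ fixed, letting $\delta\to 0^+$ drives the right-hand side to $0$ while the left-hand side tends to $1$, the desired contradiction. Hence $\mr r_B(x)\le \mr r_C(x)$, and equality follows.

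The main obstacle, and the exact point where quasi-symmetry is indispensable, is the reduction to real spectra: only once $\mr{Sp}_A(x)\subseteq\R$ is known can I invoke real-variable Weierstrass approximation to manufacture a polynomial that is large at the extremal spectral value $\lambda$ of $B$ yet uniformly tiny on the entire spectrum of $C$ while staying bounded on the spectrum of $A$. Without real spectra one would have to approximate on complex spectral sets and appeal to Runge- or Mergelyan-type theorems, which is both more delicate and potentially obstructed by the geometry of the spectrum. I note that this direct route does not actually use $*$-semisimplicity; that hypothesis is natural because an equivalent, more conceptual proof passes to the enveloping C*-algebras $C^*(A)\twoheadrightarrow C^*(B)\twoheadrightarrow C^*(C)$, using that quasi-symmetry makes Ptak's function $p_D(a)=\mr r_D(a^*a)^{1/2}$ agree with $\|\cdot\|_{C^*(D)}$ and hence $\mr r_D(x)=\|x\|_{C^*(D)}$ on self-adjoint $x\in S$, after which the same polynomial-separation idea shows the quotient $C^*(B)\to C^*(C)$ is isometric on self-adjoint elements.
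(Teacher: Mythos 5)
Your proof is correct, and the first thing to note is that the paper does not actually prove this theorem: Theorem \ref{thm:SW} is imported verbatim from \cite[Theorem 3.4]{SW-exotic}, so the only meaningful comparison is with that cited argument. Your route is genuinely different from, and more elementary than, the one in \cite{SW-exotic}. There (as your closing paragraph correctly guesses) the argument runs through the quasi-Hermitian machinery of \cite{SW-Hermitian} and enveloping C*-algebras: $*$-semisimplicity is what lets one place $A\subset B\subset C$ over the chain of surjections $\mr C^*(A)\to \mr C^*(B)\to \mr C^*(C)$ and identify $\mr r_D(x)$ with $\|x\|_{\mr C^*(D)}$ for self-adjoint $x\in S$, after which the interpolation inequality is exploited at the C*-level. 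Your argument instead stays entirely at the level of the three Banach $*$-algebras: quasi-symmetry plus the polynomial spectral mapping theorem gives real spectra; monotonicity of spectra under the inclusions gives $\mr r_C(x)\le \mr r_B(x)\le \mr r_A(x)$; and the Weierstrass bump polynomial $p$ with $p(0)=0$ converts a hypothetical gap $\mr r_B(x)>\mr r_C(x)$ into the inequality $1-\delta\le (1+\delta)^{1-\theta}\delta^{\theta}$, which fails for small $\delta>0$. I checked the delicate points---that $p(x)$ is a self-adjoint element of $S$ (so the interpolation hypothesis applies to it), that the spectral mapping theorem is available in the possibly nonunital setting precisely because $p$ has no constant term, and that the three spectral-radius estimates for $p(x)$ follow from the spectra being compact subsets of $[-\mr r_A(x),\mr r_A(x)]$, $\{\lambda\}\cup[-\mr r_C(x),\mr r_C(x)]$ and $[-\mr r_C(x),\mr r_C(x)]$ respectively---and they all hold. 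Your approach buys two real things: it never invokes $*$-semisimplicity, and it uses only realness of spectra of self-adjoint elements of $S$, so it in fact proves the stronger ``quasi-Hermitian'' form of the theorem alluded to in the remark immediately following its statement in the paper; what the cited C*-envelope route buys in exchange is structural information (isometricity of $\mr C^*(B)\to\mr C^*(C)$ on suitable elements) that the authors reuse elsewhere.
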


The above theorem is stated more generally for whenever $S$ is ``quasi-Hermitian in $A$'', however we will only use it in the case when $S$ is quasi-symmetric in $A$  (see \cite{SW-Hermitian} for relevant definition and the implication of quasi-symmetric implying quasi-Hermitian).

The following lemma almost does not need to be stated, but is included for the sake of completeness.
\begin{lem}\label{lem:quasi-obvious}
	Let $A$ be a Banach $*$-algebra and $S$ be a $*$-subalgebra of $A$. Suppose that $S$ is quasi-symmetric in $A$.
	\begin{enumerate}
		\item If $A_0$ is a norm-closed $*$-subalgebra of $A$, then $A_0\cap S$ is quasi-symmetric in $A_0$.
		\item If $B$ is a Banach $*$-algebra and $\pi\colon A\to B$ is a bounded $*$-homomorphism with dense range, then $\pi(S)$ is quasi-symmetric in $B$.
	\end{enumerate}
\end{lem}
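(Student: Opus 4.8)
The plan is to reduce both parts to standard facts about how spectra behave under passage to closed subalgebras and under homomorphisms, combined with the elementary topological observation that a compact subset of $[0,\infty)$ has connected complement in $\C$.

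For part (1), I would fix $a\in A_0\cap S$ and set $x=a^*a\in A_0$. Since $a\in S$ and $S$ is quasi-symmetric in $A$, we have $\mr{Sp}_A(x)\subset[0,\infty)$. Passing to unitizations $A_0^+\subset A^+$ with a common adjoined unit---so that $A_0^+$ is a closed unital subalgebra of $A^+$ with $\mr{Sp}_A(x)=\mr{Sp}_{A^+}(x)$ and $\mr{Sp}_{A_0}(x)=\mr{Sp}_{A_0^+}(x)$---I would invoke the classical spectral-permanence result: for an element of a closed unital subalgebra, the subalgebra spectrum is obtained from the ambient spectrum by adjoining some of the bounded connected components of its complement, so that $\mr{Sp}_{A^+}(x)\subseteq\mr{Sp}_{A_0^+}(x)$ and $\partial\,\mr{Sp}_{A_0^+}(x)\subseteq\partial\,\mr{Sp}_{A^+}(x)$. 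The key point is then that $\mr{Sp}_{A^+}(x)$ is a compact subset of $[0,\infty)\subset\R$, so its complement $\C\setminus\mr{Sp}_{A^+}(x)$ is connected and unbounded; there are no bounded components to fill in. Hence $\mr{Sp}_{A_0}(x)=\mr{Sp}_A(x)\subset[0,\infty)$, which is exactly the assertion that $A_0\cap S$ is quasi-symmetric in $A_0$.

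For part (2), I would fix $a\in S$ and note that since $\pi$ is a $*$-homomorphism, $\pi(a)^*\pi(a)=\pi(a^*a)$. The relevant fact is that a unit-preserving homomorphism of Banach algebras cannot enlarge spectra: extending $\pi$ to a unital homomorphism $\pi^+\colon A^+\to B^+$ and using that $\pi^+(y-\lambda)$ is invertible in $B^+$ whenever $y-\lambda$ is invertible in $A^+$, one obtains $\mr{Sp}_B(\pi(y))\subseteq\mr{Sp}_A(y)$ for every $y\in A$. Applying this with $y=a^*a$ gives $\mr{Sp}_B(\pi(a)^*\pi(a))=\mr{Sp}_B(\pi(a^*a))\subseteq\mr{Sp}_A(a^*a)\subset[0,\infty)$, so $\pi(S)$ is quasi-symmetric in $B$.

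I expect the only genuine subtlety to lie in part (1): one must cite the ``filling-in-holes'' version of spectral permanence for closed (after unitization) subalgebras rather than the naive equality $\mr{Sp}_{A_0}=\mr{Sp}_A$, which can fail in general, and the hypothesis that the spectrum of $a^*a$ lies in $[0,\infty)$ is precisely what rules out any holes. I would also remark that the density of $\pi(A)$ in $B$ is not actually needed for the quasi-symmetry conclusion in part (2)---the spectral inclusion holds for every bounded $*$-homomorphism---so density is recorded merely to match the way the lemma is applied.
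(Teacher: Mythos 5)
Your proof is correct and takes essentially the same route as the paper's: part (1) rests on spectral permanence for closed subalgebras (the paper cites the boundary inclusion $\partial\,\mr{Sp}_{A_0}(a^*a)\subseteq\mr{Sp}_A(a^*a)$, you cite the equivalent fill-in-the-holes formulation, with the observation that a compact subset of $[0,\infty)$ leaves nothing to fill doing the same work in both), and part (2) is the same one-line argument that a $*$-homomorphism, extended unitally to unitizations, cannot enlarge spectra. Your closing remarks --- that naive spectral equality can fail for general closed subalgebras, and that density of $\pi(A)$ in $B$ is not actually needed for (2) --- are both accurate.
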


\begin{proof}
	\emph{(1)} Suppose $a\in A_0\cap S$ is self-adjoint. Then $\partial\mr{Sp}_{A_0}(a^*a)$ (the boundary of $\mr{Sp}_{A_0}(x)$) is contained in $\mr{Sp}_{A}(a^*a)$, which is contained in $[0,\infty)$. It follows that $A_0\cap S$ is quasi-symmetric in $A_0$.
	
	\emph{(2)} Suppose $b\in \pi(S)$ and choose $a\in S$ such that $\pi(a)=b$. Then $\pi(a^*a)=b^*b$. Since
	$ \mr{Sp}_B(b^*b)\subset \mr{Sp}_A(a^*a)\subset [0,\infty),$
	we conclude that $\pi(S)$ is quasi-symmetric in $B$.
\end{proof}

The following lemma comprises the main part of the argument characterizing rigidly symmetric C*-algebras.

\begin{lem}\label{lem:Hem-main-lem}
	Let $A$ and $B$ be C*-algebras and $1\leq p<q\leq \infty$ be H\"older conjugate. Consider the following conditions.
	\begin{enumerate}
		\item $A\odot B$ is quasi-symmetric in $A{\opq} B$;
		\item If $C$ and $D$ are quotients of C*-subalgebras of $A$ and $B$, respectively, then $\mr r_{C{\opq} D}(x^*x)=\mr r_{C\otimes_{\min}D}(x^*x)$ for every $x\in C\odot D$.
		\item If $C$ and $D$ are quotients of C*-subalgebras of $A$ and $B$, respectively, then $C{\otimes_{\mr C^*_{p,q}}} D\cong C\otimes_{\min}D$ canonically.
	\end{enumerate}
	Then (1) $\Rightarrow$ (2) $\Rightarrow$ (3).
\end{lem}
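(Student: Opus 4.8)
The plan is to prove the two implications in turn, using Theorem \ref{thm:SW} for the first and the results of Section 5 (most crucially Corollary \ref{cor:different-red-group} applied to free groups) for the second.

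For the implication (1) $\Rightarrow$ (2), I would first reduce to showing quasi-symmetry descends to the relevant subquotients. Given quotients $C$ and $D$ of C*-subalgebras $A_0 \subset A$ and $B_0 \subset B$, Lemma \ref{lem:quasi-obvious} together with Proposition \ref{prop:containment} and Proposition \ref{prop:functorial} should let me transfer the quasi-symmetry of $A \odot B$ in $A {\opq} B$ to the quasi-symmetry of $C \odot D$ in $C {\opq} D$: passing to a subalgebra uses part (1) of the lemma, and passing to a quotient uses part (2) together with the fact that the canonical maps on the algebraic tensor products extend to bounded $*$-homomorphisms with dense range. Once $C \odot D$ is quasi-symmetric in $C {\opq} D$, I invoke Theorem \ref{thm:SW} on the spectral-interpolation triple $(C \otimes_{p,q} D,\ C \otimes_{2,2} D,\ C \otimes_{q,p} D)$, which is a spectral interpolation of $*$-semisimple Banach $*$-algebras by the Proposition preceding Theorem \ref{thm:SW} and by Remark \ref{rem:*-semisimple}. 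The conclusion $\mr r_B = \mr r_C$ of that theorem, read off at the middle term, gives $\mr r_{C {\opq} D}(x^*x) = \mr r_{C \otimes_{2,2} D}(x^*x)$, and since $C \otimes_{2,2} D \cong C \otimes_{\min} D$ isometrically by Proposition \ref{prop:p=2}, this is exactly (2).

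For the implication (2) $\Rightarrow$ (3), the key point is that the enveloping C*-algebra norm on $C {\opq} D$ is governed by the spectral radii $\mr r_{C {\opq} D}(x^*x)$. Since $C {\opq} D$ is $*$-semisimple (Remark \ref{rem:*-semisimple}), the norm on its enveloping C*-algebra $C {\otimes_{\mr C^*_{p,q}}} D$ satisfies $\|x\|_{C {\otimes_{\mr C^*_{p,q}}} D}^2 = \mr r_{C {\opq} D}(x^*x)$ for $x \in C \odot D$ (this is the standard formula for the enveloping C*-norm of a $*$-semisimple Banach $*$-algebra). Likewise $\|x\|_{C \otimes_{\min} D}^2 = \mr r_{C \otimes_{\min} D}(x^*x)$, since $C \otimes_{\min} D$ is already a C*-algebra. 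Hypothesis (2) equates these two spectral radii, so the two C*-norms agree on $C \odot D$, yielding the canonical isomorphism $C {\otimes_{\mr C^*_{p,q}}} D \cong C \otimes_{\min} D$.

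The main obstacle I anticipate is establishing the enveloping-C*-norm formula $\|x\|_{C {\otimes_{\mr C^*_{p,q}}} D}^2 = \mr r_{C {\opq} D}(x^*x)$ cleanly, i.e. verifying that the spectral radius of $x^*x$ in the Banach $*$-algebra $C {\opq} D$ really does compute the enveloping C*-seminorm rather than merely bounding it. This requires knowing that self-adjoint elements have spectral radius equal to their image norm under the universal $*$-representation, which holds for $*$-semisimple (in fact for all) Banach $*$-algebras by general Gelfand-Naimark-Segal theory, but the identification of $\mr r(x^*x)$ with $\|x\|_{\mr{env}}^2$ relies on the C*-identity in the enveloping algebra and the fact that the Banach-algebra spectrum of $x^*x$ agrees with its spectrum in the enveloping C*-algebra. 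Handling the second of these — that spectra are preserved under the canonical map into the enveloping C*-algebra for the self-adjoint elements $x^*x$ — is the delicate step, and I would lean on $*$-semisimplicity together with the symmetric/Hermitian machinery underlying Theorem \ref{thm:SW} to secure it.
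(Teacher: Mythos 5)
The main gap is in your (1) $\Rightarrow$ (2). The triple $(C\opq D,\ C\otimes_{2,2}D,\ C\otimes_{q,p}D)$ you propose is degenerate: by definition $\otimes_{q,p}=\otimes_{p,q}$, so your first and third algebras are the \emph{same} algebra, and the required continuous dense inclusion of the middle term into the third term, $C\otimes_{2,2}D\subset C\otimes_{q,p}D=C\opq D$, goes the wrong way --- by Proposition \ref{prop:inclusions} the $(p,q)$-norm dominates the minimal norm, not conversely, and (by the remark based on Choi's example) the identity map in the opposite direction is in general not even bounded. The Proposition preceding Theorem \ref{thm:SW} also does not cover your triple: it requires three distinct parameters $p_0<p_1<p_2$ all in $[1,2]$. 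More fundamentally, Theorem \ref{thm:SW} concludes $\mr r_B=\mr r_C$ only for the \emph{middle and last} members of a triple $(A,B,C)$, while the quasi-symmetry hypothesis lives in the \emph{top} algebra $A$, about whose spectral radius the theorem says nothing; since hypothesis (1) places quasi-symmetry exactly in the algebra $C\opq D$ whose spectral radius you must control, no single application of Theorem \ref{thm:SW} can yield $\mr r_{C\opq D}(x^*x)=\mr r_{C\otimes_{\min}D}(x^*x)$. The paper bridges precisely this gap: it applies Theorem \ref{thm:SW} to the triples $(C\opq D,\ C\otimes_{p',q'}D,\ C\otimes_{\min}D)$ with $p<p'\leq 2$, getting $\mr r_{C\otimes_{p',q'}D}(x^*x)=\mr r_{C\otimes_{\min}D}(x^*x)$, and then proves the quantitative estimate $\|(x^*x)^n\|_{C\opq D}\leq (mk^{2n})^{1/q''}\|(x^*x)^n\|_{C\otimes_{p',q'}D}$ (where $\tfrac{1}{p'}+\tfrac{1}{q''}=\tfrac{1}{p}$ and $x$ has tensor rank $k$) by cutting $\Pi_p(x^*x)^nF$ down by the finite-rank projection onto its range and applying H\"older's inequality for Schatten norms; this gives $\mr r_{C\opq D}(x^*x)\leq k^{2/q''}\,\mr r_{C\otimes_{\min}D}(x^*x)$, and letting $q''\to\infty$ removes the constant. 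That finite-rank/Schatten-norm argument is the heart of the lemma and is entirely absent from your proposal.

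Your reduction to subquotients via Lemma \ref{lem:quasi-obvious} and Propositions \ref{prop:containment}, \ref{prop:commutative}, \ref{prop:functorial} does match the paper, and your (2) $\Rightarrow$ (3) is close to the paper's, but the ``standard formula'' $\|x\|_{C\otimes_{\mr C^*_{p,q}}D}^2=\mr r_{C\opq D}(x^*x)$ is false in general: $*$-semisimplicity only gives the inequality $\|x\|_{\mr C^*(A)}^2\leq \mr r_A(x^*x)$, and equality for all $x$ is equivalent to the Banach $*$-algebra being Hermitian (Pt\'{a}k's theory); it fails, e.g., for $\ell^1(\F_2)$. Fortunately the equality is not needed: the inequality together with hypothesis (2) gives $\|x\|_{C\otimes_{\mr C^*_{p,q}}D}\leq \|x\|_{\min}$, and the reverse inequality holds because the minimal norm is a C*-seminorm on $C\opq D$ arising from a $*$-representation, hence is dominated by the enveloping C*-norm. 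So that half of your argument is fixable in one line, whereas the first half requires the missing idea described above.
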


\begin{proof}
	\textit{(1)} $\Rightarrow$ \textit{(2)}: Suppose $A_0$ and $B_0$ are C*-subalgebras of $A$ and $B$ such that $C$ is a quotient of $A_0$ and $D$ is a quotient of $B_0$. Since the bifunctor $(A,B)\mapsto A{\opq} B$ is injective and isometrically commutative (see {Proposition \ref{prop:containment}} and Proposition \ref{prop:commutative}), it follows from Lemma \ref{lem:quasi-obvious}(1) that $A_0\odot B_0$ is quasi-symmetric in $A_0{\opq} B_0$. Further, since the canonical map $A_0{\opq} B_0\to C{\opq} D$ is a contractive $*$-homomorphism by Proposition \ref{prop:functorial}, it follows by Lemma \ref{lem:quasi-obvious}(2) that $C\odot D$ is quasi-symmetric in $C{\opq} D$.
	
	{Consider concrete embeddings $C\subset \mr B(\mc H)$ and $D\subset \mr B(\mc K)$ of $C$ and $D$. Replacing $\mc H$ and $\mc K$ by their infinite amplifications, we may assume that
	$$ \|\pi_s(y)\|_\mr{op}=\|y\|_{C\otimes_{s} D}$$
	for all $s\in [1,\infty]$ and $y\in C\odot D$ by Lemma \ref{lem:easier}.}
	Let $x=\sum_{i=1}^k a_i\otimes b_i\in C\odot D$ and
	suppose $F\in \mr S_p(\overline{\mc K}, \mc H)$ has finite rank $m$ and $\|F\|_{\mr S_p}=1$. Setting $P_n\in \mr B(\mc H)$ to be the orthogonal projection onto the range of $\pi_p(x^*x)^nF$, we have $\mr{rank}\,P_n\leq mk^{2n}$ for each $n\in \N$. As such, if $n\in \N$ and $p<p'\leq 2<q''<\infty$ satisfy $\frac{1}{p'}+\frac{1}{q''}=\frac{1}{p}$, then
	\begin{eqnarray*}
		\| \pi_p(x^*x)^nF\|_{\mr S_p} &=& \|P_n\pi_p(x^*x)^nF\|_{\mr S_p}\\
		&\leq& \|P_n\|_{\mr S_{q''}}\|\pi_{p'}(x^*x)^nF\|_{\mr S_{p'}}\\
		&\leq& (mk^{2n})^{\frac{1}{q''}}\|\pi_{p'}(x^*x)^n\|_{\op}\\
		&\leq& (mk^{2n})^{\frac{1}{q''}}\|(x^*x)^n\|_{C{\otimes_{p',q'}}D},
	\end{eqnarray*}
	where $q'$ is the H\"older conjugate of $p'$.
	Since $\|\pi_p(x^*x)^n\|=\|\pi_q(x^*x)^n\|$ by virtue of $(x^*x)^n\in C\odot D$ being self adjoint (see Corollary \ref{cor:*-isometric}), we deduce
	$$ \|(x^*x)^n\|_{C{\otimes_{p,q}}D}\leq (mk^{2n})^{\frac{1}{q''}}\|(x^*x)^n\|_{C{\otimes_{p',q'}} D}$$
	for all $n\in \N$. Hence,
	\begin{eqnarray}
	r_{C{\otimes_{p,q}}D}(x^*x)&=& \lim_{n\to \infty} \|(x^*x)^n\|_{C{\opq} D}^{\frac{1}{n}} \nonumber \\
	&\leq& \liminf_{n\to \infty} \left((mk^{2n})^{\frac{1}{q''}}\|(x^*x)^n\|_{C{\otimes_{p',q'}} D}\right)^{\frac{1}{n}} \nonumber\\
	&=& k^{\frac{2}{q''}}\mr r_{C{\otimes_{p',q'}} D}(x^*x) \nonumber \\
	&=& k^{\frac{2}{q''}} \mr r_{C\otimes_{\min} D}(x^*x).
	\end{eqnarray}
	The final line in this equation follows from the preceding by Theorem \ref{thm:SW} since $(C{\otimes_{p,q}}D, C{\otimes_{p',q'}} D, C\otimes_{\min} D)$ is a spectral interpolations of triple Banach $*$-algebras and $C\odot D$ is quasi-{symmetric} in $C{\opq} D$. As this holds for every $q''\in (2,\infty)$, we deduce
	$$ r_{C{\opq} D}(x^*x)\leq r_{C\otimes_{\min} D}(x^*x)$$
	by taking the limit as $q''\to\infty$.
	
	\textit{(2)} $\Rightarrow$ \textit{(3)}: Suppose $x\in C\odot D$. Then
	$$ \|x\|_{C{\otimes_{\mr C^*_{p,q}}}D}^2=\mr r_{C{\otimes_{\mr C^*_{p,q}}}D}(x^*x)\leq \mr r_{C{\opq} D}(x^*x)=\mr r_{C\otimes_{\min}D}(x^*x)=\|x\|_{\min}^2.$$
	It follows that $\|x\|_{C{\otimes_{\mr C^*_{p,q}}}D}=\|x\|_{\min}$ for all $x\in C\odot D$.
\end{proof}

Before proving the next theorem, we need a technical lemma that will allow us to pass to unitizations of C*-algebras.

\begin{lem}\label{lem:unitizations}
	Suppose $A$ and $B$ be are C*-algebras, and let $\widetilde{A}$ and $\widetilde{B}$ denote the unitizations of $A$ and $B$. If $A\odot B$ is quasi-symmetric $A{\opq} B$ for H\"older conjugate $p,q\in [1,\infty]$, then $\widetilde{A}\odot \widetilde{B}$ is quasi-symmetric in $\widetilde A{\opq} \widetilde B$.
\end{lem}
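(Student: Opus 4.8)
The plan is to realize $A\opq B$ as a closed $*$-ideal of $\widetilde A\opq\widetilde B$ whose quotient is a C*-algebra, and then to glue the quasi-symmetry hypothesis on the ideal to the (automatic) symmetry of the quotient. First I would set $I:=A\opq B$ and $\mathcal C:=\widetilde A\opq\widetilde B$. Since $A\odot B$ is an algebraic two-sided ideal of $\widetilde A\odot\widetilde B$ (because $A\trianglelefteq\widetilde A$ and $B\trianglelefteq\widetilde B$) and the inclusion $A\opq B\to\widetilde A\opq\widetilde B$ is isometric by the injectivity of $\opq$ (Proposition~\ref{prop:containment}), $I$ is a closed $*$-ideal of $\mathcal C$; one also checks $(\widetilde A\odot\widetilde B)\cap I=A\odot B$.

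Next I would show that $\mathcal C/I$ is a C*-algebra. Let $\chi_A\colon\widetilde A\to\C$ and $\chi_B\colon\widetilde B\to\C$ be the canonical characters. Using functoriality (Proposition~\ref{prop:functorial}) for both exponents together with the corner identifications $\C\opq\widetilde B\cong\widetilde B^{\op}$ and $\widetilde A\opq\C\cong\widetilde A$ (from Proposition~\ref{prop:p=1 or p=infty} and Proposition~\ref{prop:commutative}), the maps $\chi_A\otimes\mathrm{id}$ and $\mathrm{id}\otimes\chi_B$ assemble into a contractive $*$-homomorphism $\Psi=(\chi_A\otimes\mathrm{id},\,\mathrm{id}\otimes\chi_B)\colon\mathcal C\to\widetilde B^{\op}\oplus\widetilde A$ with $\ker\Psi=I$ and image the fibre product $\{(y,x):\chi_B(y)=\chi_A(x)\}$. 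The $*$-linear map $s(b+\lambda 1_B,\,a+\lambda 1_A)=a\otimes 1+1\otimes b+\lambda(1\otimes 1)$ is a bounded section of $\Psi$ — its boundedness uses that the corner embeddings $\widetilde A\otimes 1$ and $1\otimes\widetilde B$ are isometric — so $\Psi$ is bounded below modulo $I$ and $\mathcal C/I$ is topologically $*$-isomorphic to the fibre product, which is a closed $*$-subalgebra of a C*-algebra. In particular $\mathcal C/I$ is symmetric, and $\pi(\widetilde A\odot\widetilde B)$ is automatically quasi-symmetric in $\mathcal C/I$.

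It remains to combine these two facts. For $z\in\widetilde A\odot\widetilde B$ I would write $z=w+c$ with $w\in A\odot B$ and $c$ the ``corner'' part, so that $h:=z^*z=h_I+c^*c$ with $h_I\in A\odot B\subseteq I$. Fix $\lambda\notin[0,\infty)$ and pass to the self-adjoint element $g_\lambda:=(\bar\lambda-h)(\lambda-h)=(h-\mathrm{Re}\,\lambda)^2+(\mathrm{Im}\,\lambda)^2$; since $h$ is self-adjoint one has $g_\lambda=(\lambda-h)(\bar\lambda-h)$ as well, so $\lambda-h$ is invertible in $\mathcal C$ precisely when $g_\lambda$ is. Because $\pi(h)=\pi(c)^*\pi(c)\ge 0$ in the C*-algebra $\mathcal C/I$, the image $\pi(g_\lambda)$ is positive and bounded below, hence $g_\lambda$ is invertible modulo $I$. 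To promote invertibility modulo $I$ to genuine invertibility I would invoke spectral permanence for the ideal $I$ — for $m\in I$ one has $\mathrm{Sp}_{\mathcal C}(m)\cup\{0\}=\mathrm{Sp}_{\widetilde I}(m)\cup\{0\}$ — together with the hypothesis that $A\odot B$ is quasi-symmetric in $I$.

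The hard part is exactly this last combination. The hypothesis only controls the spectra of the squares $s^*s$ coming from the dense subalgebra $A\odot B$, whereas the parametrix obstruction produced by inverting $g_\lambda$ modulo $I$ is an a priori arbitrary element of $\widetilde I$, whose spectrum is not directly signed by quasi-symmetry. I expect to resolve this by choosing a \emph{self-adjoint} parametrix $V\in\mathcal C$ lifting the positive element $\pi(g_\lambda)^{-1}$ and arranging the remainder $1-g_\lambda V$ so that the obstruction is dominated by a positive square from $A\odot B$, to which the quasi-symmetry hypothesis applies through the spectral-radius inequality of Remark~\ref{rem:norm-relation} and Theorem~\ref{thm:SW}; carrying out this gluing carefully is the crux of the argument. (A technically cleaner variant I would also try is to prove directly that $A\odot B$ quasi-symmetric in $I$ plus $\mathcal C/I$ a C*-algebra forces $\widetilde A\odot\widetilde B$ to be quasi-symmetric in $\mathcal C$, i.e.\ a relative ``symmetric ideal with C*-quotient'' extension lemma, which is the abstract statement the above computation is meant to establish.)
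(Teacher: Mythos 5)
Your skeleton matches the paper's: both proofs realize $J:=A\opq B$ as a closed $*$-ideal of $C:=\widetilde A\opq \widetilde B$ (via Proposition \ref{prop:containment}) and then try to glue quasi-symmetry on the ideal to symmetry of the image of $\widetilde A\odot\widetilde B$ in the quotient. But there is a genuine gap: the gluing step itself, which you explicitly defer (``carrying out this gluing carefully is the crux of the argument''), is never carried out, and it is precisely the mathematical content of the lemma. The paper closes this step by appealing to Palmer's theorem that a $*$-algebra $C$ with a $*$-ideal $J$ such that both $J$ and $C/J$ are symmetric is itself symmetric \cite[Theorem 9.8.3 (e)$\Rightarrow$(a)]{palmer}, together with the observation that Palmer's proof also yields the relative version needed here: if $S$ is a $*$-subalgebra of $C$ with $S\cap J$ quasi-symmetric in $C$ and $q(S)$ quasi-symmetric in $C/J$, then $S$ is quasi-symmetric in $C$. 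Your parametrix sketch runs into exactly the obstruction you yourself identify -- the hypothesis only signs spectra of elements $x^*x$ with $x\in A\odot B$, while the obstruction to inverting $g_\lambda$ is an a priori arbitrary element of (the unitization of) the ideal -- and your proposed resolutions are conditional (``I expect to resolve\ldots'', ``I would also try\ldots''). The ``technically cleaner variant'' you mention at the end is indeed the right abstract statement, but it is exactly the unproven extension lemma; citing or reproving Palmer's argument is what is needed to complete the proof.

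A secondary issue: your claim that $C/I$ is a C*-algebra is both stronger than necessary and not justified as written. The identity $\ker\Psi=I$ is only evident on the algebraic tensor product; for the completions, the kernel of a contractive $*$-homomorphism need not equal the closure of its algebraic kernel, and your ``bounded section'' argument does not address this (compare the care taken in Proposition \ref{prop:inclusions} for a similar injectivity issue). The paper avoids all of this: it needs only the purely algebraic fact that the image of $S=\widetilde A\odot \widetilde B$ in $C/J$ is $*$-isomorphic to $S/(S\cap J)\cong \C\oplus A\oplus B$, a symmetric $*$-algebra; since spectra can only shrink in the passage to the larger algebra $C/J$, this already gives that $q(S)$ is quasi-symmetric in $C/J$. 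Note also that the quasi-symmetry of $S\cap J=A\odot B$ in $C$ (rather than merely in $J$) follows from your hypothesis by spectral permanence for ideals, a point worth stating explicitly. Once these two inputs are in place, the whole fibre-product discussion can be discarded in favour of the extension lemma.
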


\begin{proof}
	We will prove this in the case where neither $A$ nor $B$ is unital. The other cases are deduced similarly.
	
	It is known that if a $*$-algebra $C$ admits a $*$-ideal $J$ such that both $J$ and $C/J$ are symmetric, then $C$ itself must be symmetric (see \cite[Theorem 9.8.3 (e)$\Rightarrow$(a)]{palmer}). The proof of this result also shows the following: Suppose $J$ is a $*$-ideal of a $*$-algebra $C$ with $J$ and let $q\colon C\to C/J$ denote the quotient map. If $S$ is a $*$-subalgebra of $C$ such that $q(S)$ is quasi-symmetric in $C/J$ and $S\cap J$ is quasi-symmetric in $C$, then $S$ is quasi-symmetric in $C$.
	
	Let $C=\widetilde A{\opq} \widetilde B$, $S=\widetilde A\odot \widetilde B$ and $J=A{\opq} B$. Since $S\cap J=A\odot B$ is quasi-symmetric in $A{\opq} B$ and, hence, also in $C$  by assumption, and $q(S)$ is symmetric by virtue of being $*$-isomorphic to the C*-algebra $\C\oplus A\oplus B$, we deduce that $S=\widetilde A\odot \widetilde B$ is quasi-symmetric in $\widetilde A{\opq} \widetilde B$.
\end{proof}

Alas, we are now equipped to prove the main theorem of this section.

\begin{thm}\label{thm:main}
	Let $A$ be a C*-algebra, and $\{p,q\}\subseteq [1,\infty]\backslash\{2\}$ be H\"older conjugate. The following are equivalent.
	\begin{enumerate}
		\item $A$ is type I;
		\item If $B$ is a symmetric Banach $*$-algebra, then so is $A\otimes_{\gamma} B$;
		\item $A$ is rigidly symmetric;
		\item There exists a non-type I C*-algebra $B$ such that $A\odot B$ is quasi-symmetric in $A{\opq} B$.
	\end{enumerate}
\end{thm}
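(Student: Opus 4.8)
The plan is to establish the cycle $(1)\Rightarrow(2)\Rightarrow(3)\Rightarrow(4)\Rightarrow(1)$, rendering all four conditions equivalent. The implication $(1)\Rightarrow(2)$ I would read off from Kugler's theorem \cite{Kugler}, in the form that $A\otimes_{\gamma}B$ is symmetric whenever $A$ is type I and $B$ is an arbitrary \emph{symmetric} Banach $*$-algebra. The implication $(2)\Rightarrow(3)$ is immediate: every C*-algebra is symmetric, so $(2)$ specializes directly to the definition of rigid symmetry. Thus the real content sits in $(3)\Rightarrow(4)$ and, above all, in the converse $(4)\Rightarrow(1)$.

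For $(3)\Rightarrow(4)$, I would fix a single non-type I C*-algebra $B$, e.g. $B=\mr C^*_{\mr r}(\F_2)$. Since $\opq$ is a cross norm and $\gamma$ is the greatest cross norm, the identity on $A\odot B$ extends to a contractive $*$-homomorphism with dense range $\pi\colon A\otimes_{\gamma}B\to A\opq B$. Rigid symmetry gives that $A\otimes_{\gamma}B$ is symmetric, so in particular $A\odot B$ is quasi-symmetric in $A\otimes_{\gamma}B$; feeding $\pi$ into Lemma \ref{lem:quasi-obvious}(2) transports this to the statement that $A\odot B$ is quasi-symmetric in $A\opq B$, which is precisely $(4)$.

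The heart of the matter is $(4)\Rightarrow(1)$, which I would prove by contraposition. Assume $(4)$ holds for some non-type I C*-algebra $B$ and suppose, toward a contradiction, that $A$ is also not type I. Relabelling so that $1\leq p<q\leq\infty$ (legitimate since $\opq=\otimes_{q,p}$ and neither index equals $2$), Lemma \ref{lem:Hem-main-lem} applies and forces $C\otimes_{\mr C^*_{p,q}}D\cong C\otimes_{\min}D$ canonically for all quotients $C,D$ of C*-subalgebras of $A$ and $B$. The decisive input is then the structural fact that \emph{every non-type I C*-algebra admits $\mr C^*_{\mr r}(\F_2)$ as a subquotient}. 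Granting this, both $A$ and $B$ have $\mr C^*_{\mr r}(\F_2)$ as a subquotient, so choosing $C=D=\mr C^*_{\mr r}(\F_2)$ would give $\mr C^*_{\mr r}(\F_2)\otimes_{\mr C^*_{p,q}}\mr C^*_{\mr r}(\F_2)\cong\mr C^*_{\mr r}(\F_2)\otimes_{\min}\mr C^*_{\mr r}(\F_2)$. But $\F_2$ satisfies the hypotheses of Corollary \ref{cor:different-red-group} by Example \ref{ex:groups}(1), and since $\max\{p,q\}>2$, taking the parameter $2$ there shows the canonical map $\mr C^*_{\mr r}(\F_2)\otimes_{\mr C^*_{p,q}}\mr C^*_{\mr r}(\F_2)\to\mr C^*_{\mr r}(\F_2)\otimes_{\min}\mr C^*_{\mr r}(\F_2)$ is not injective. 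This contradiction forces $A$ to be type I.

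The main obstacle is the structural fact just invoked, and I would establish it in three steps. First, since type I-ness is detected on separable C*-subalgebras, I pass to a separable non-type I subalgebra of the given algebra. Second, I invoke Glimm's theorem to extract a subquotient isomorphic to the CAR algebra $\mathcal{F}$. Third, I use a Pimsner--Voiculescu style embedding $\mr C^*_{\mr r}(\F_2)\hookrightarrow\mathcal{F}$ (AF-embeddability of the reduced free group C*-algebra) to realize $\mr C^*_{\mr r}(\F_2)$ as a subalgebra, hence a subquotient, of $\mathcal{F}$. Because a subquotient of a subquotient is again a subquotient, chaining these produces $\mr C^*_{\mr r}(\F_2)$ as a subquotient of $A$. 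The separability reduction and transitivity are routine, and Lemma \ref{lem:unitizations} can be used to reduce to the unital setting wherever convenient; the genuine external ingredients are Glimm's theorem and the embedding of $\mr C^*_{\mr r}(\F_2)$ into an AF algebra.
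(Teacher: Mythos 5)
Your overall architecture is exactly the paper's: the same cycle $(1)\Rightarrow(2)\Rightarrow(3)\Rightarrow(4)\Rightarrow(1)$, with Kugler for the first implication, Lemma \ref{lem:quasi-obvious} behind $(3)\Rightarrow(4)$, and Lemma \ref{lem:Hem-main-lem} plus Corollary \ref{cor:different-red-group} plus Lemma \ref{lem:unitizations} driving $(4)\Rightarrow(1)$. The one place you depart from the paper is your proof of the structural fact that every non-type I C*-algebra admits $\mr C^*_{\mr r}(\F_2)$ as a subquotient, and that is where there is a genuine gap: the third step, an embedding of $\mr C^*_{\mr r}(\F_2)$ into the CAR algebra, is not merely unproven but impossible. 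Every C*-subalgebra of an AF algebra is quasidiagonal, and by Rosenberg's theorem a discrete group whose reduced C*-algebra is quasidiagonal must be amenable; since $\F_2$ is non-amenable, $\mr C^*_{\mr r}(\F_2)$ is not quasidiagonal and therefore embeds into no AF algebra. (The Pimsner--Voiculescu AF-embedding you are thinking of concerns the irrational rotation algebras; no analogue for $\mr C^*_{\mr r}(\F_2)$ can exist, for precisely the reason above.) So the chain ``Glimm gives the CAR algebra as a subquotient, then embed $\mr C^*_{\mr r}(\F_2)$ into the CAR algebra'' breaks at its last link.

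The fact you want is nevertheless true, and there are two ways to get it. The paper's way bypasses Glimm entirely: by Blackadar \cite{Blackadar}, the unitizations $\widetilde A$ and $\widetilde B$ of non-type I C*-algebras each contain a C*-subalgebra that quotients onto the Choi algebra $\mr C^*_{\mr r}(\Z_2 * \Z_3)$; since $\F_2$ is a subgroup of $\Z_2*\Z_3$, the algebra $\mr C^*_{\mr r}(\F_2)$ sits inside $\mr C^*_{\mr r}(\Z_2*\Z_3)$ as a C*-subalgebra, and pulling it back through the quotient map exhibits $\mr C^*_{\mr r}(\F_2)$ as a quotient of a C*-subalgebra of $\widetilde A$ (resp.\ $\widetilde B$) --- exactly the form of subquotient that Lemma \ref{lem:Hem-main-lem} takes as input, with Lemma \ref{lem:unitizations} handling the passage between $A\odot B$ and $\widetilde A\odot \widetilde B$ as you anticipated. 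Alternatively, if you wish to keep your Glimm route, replace the false embedding by Kirchberg's theorem that the separable exact C*-algebras are precisely the quotients of C*-subalgebras of the CAR algebra: $\mr C^*_{\mr r}(\F_2)$ is exact, hence a subquotient (though never a subalgebra) of the CAR algebra, and your transitivity-of-subquotients argument then goes through in the required form (a quotient of a quotient of a subalgebra is a quotient of a subalgebra). With either repair, the remainder of your argument --- relabelling so that $p<q$, applying Lemma \ref{lem:Hem-main-lem} with $C=D=\mr C^*_{\mr r}(\F_2)$, and deriving the contradiction from Corollary \ref{cor:different-red-group} with parameter $2$ --- is correct and coincides with the paper's.
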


\begin{proof}
	\emph{(1)} $\Rightarrow$ \emph{(2)}: This was proven by Kugler in \cite{Kugler}.
	
	\emph{(2)} $\Rightarrow$ \emph{(3)}: Trivial.
	
	\emph{(3)} $\Rightarrow$ \emph{(4)}: Trivial.
	
	
%
%
		
	\emph{(4)} $\Rightarrow$ \emph{(1)}: Let $\widetilde A$ and $\widetilde B$ denote the unizations of $A$ and $B$ respectively and suppose that $A$ and $B$ are each not of type I. Then $\widetilde A$ and $\widetilde{B}$ are not of type I. So, as shown by Blackadar in \cite{Blackadar}, $\widetilde A$ and $\widetilde B$ each contain a C*-subalgebra that quotients onto the Choi algebra $\mr C^*_{\mr r}(\Z_2*\Z_3)$. Since $\Z_2*\Z_3$ contains $\F_2$ as a subgroup, we deduce that $\mr C^*_{\mr r}(\F_2)$ is the quotient of a C*-subalgebra of each of $A$ and $B$. By Corollary \ref{cor:different-red-group}, we know that the canonical map $\mr C^*_{\mr r}(\mathbb F_2){\otimes_{\mr C^*_{p,q}}}\mr C^*_{\mr r}(\mathbb F_2)\to \mr C^*_{\mr r}(\mathbb F_2)\otimes_{\min} \mr C^*_{\mr r}(\mathbb F_2)$ is non-injective. It follows from Lemma \ref{lem:Hem-main-lem} that $\widetilde A\odot \widetilde B$ is not quasi-symmetric in $\widetilde A{\opq}\widetilde B$. So $A\odot B$ is not quasi-symmetric in $A{\opq} B$ by Lemma \ref{lem:unitizations}.
\end{proof}

Consequently, we deduce the following.

\begin{cor}\label{cor:Herm-main}
	If $A$ and $B$ are C*-algebras, then $A\otimes_{\gamma} B$ is symmetric if and only if $A$ or $B$ is type I.
\end{cor}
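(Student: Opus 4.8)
The plan is to derive this corollary directly from Theorem~\ref{thm:main}, using only the commutativity of the projective tensor product and the elementary fact that $\|\cdot\|_{p,q}$ is a cross norm. No genuinely new idea is required here, since all of the analytic content has already been absorbed into the equivalences of Theorem~\ref{thm:main}; the corollary is essentially a repackaging of $(1)\Leftrightarrow(3)\Leftrightarrow(4)$.

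First I would dispose of the ``if'' direction. Suppose $A$ or $B$ is type~I. The tensor flip extends to a $*$-isomorphism $A\otimes_\gamma B\cong B\otimes_\gamma A$ of Banach $*$-algebras, and symmetry is invariant under $*$-isomorphism, so without loss of generality $A$ is type~I. By the implication $(1)\Rightarrow(3)$ of Theorem~\ref{thm:main}, $A$ is rigidly symmetric, which by definition means that $A\otimes_\gamma B$ is symmetric for every C*-algebra $B$; in particular it is symmetric for the given $B$.

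For the ``only if'' direction I would argue by contraposition: assuming that neither $A$ nor $B$ is type~I, I would show that $A\otimes_\gamma B$ is not symmetric. Fix any H\"older conjugate pair $\{p,q\}\subseteq[1,\infty]\setminus\{2\}$ and suppose toward a contradiction that $A\otimes_\gamma B$ is symmetric. Then the dense $*$-subalgebra $A\odot B$ is quasi-symmetric in $A\otimes_\gamma B$. Since $\|a\otimes b\|_{A\otimes_{p,q}B}=\|a\|\,\|b\|$ for elementary tensors, the norm $\|\cdot\|_{p,q}$ is a cross norm and hence dominated by the projective norm, so the identity map on $A\odot B$ extends to a contractive $*$-homomorphism $A\otimes_\gamma B\to A\otimes_{p,q}B$ with dense range. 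By Lemma~\ref{lem:quasi-obvious}(2), quasi-symmetry is inherited by the image, so $A\odot B$ is quasi-symmetric in $A\otimes_{p,q}B$. As $B$ is a non-type~I C*-algebra, the implication $(4)\Rightarrow(1)$ of Theorem~\ref{thm:main} then forces $A$ to be type~I, contradicting our assumption.

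The only points requiring care are routine bookkeeping rather than real obstacles: verifying that $\|\cdot\|_{p,q}$ is a cross norm (so that the comparison map $A\otimes_\gamma B\to A\otimes_{p,q}B$ is contractive and $*$-preserving), and confirming that the flip is a $*$-isomorphism under which symmetry is preserved. I do not anticipate any genuine difficulty, as the heavy lifting is entirely contained in Theorem~\ref{thm:main} and Lemma~\ref{lem:quasi-obvious}.
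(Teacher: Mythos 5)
Your proof is correct and follows exactly the route the paper intends: the corollary is stated as an immediate consequence of Theorem~\ref{thm:main}, and your deduction -- the flip argument plus $(1)\Rightarrow(3)$ for the ``if'' direction, and for the ``only if'' direction the contractive $*$-homomorphism $A\otimes_\gamma B\to A\otimes_{p,q}B$ (from the sub-cross-norm property) combined with Lemma~\ref{lem:quasi-obvious}(2) and $(4)\Rightarrow(1)$ -- is precisely the implicit argument behind the paper's ``Consequently, we deduce the following.''
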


One might be tempted to conjecture in light of the proof of Theorem \ref{thm:main} that if $A$ is a rigidly symmetric Banach $*$-algebra, then its enveloping C*-algebra $\mr C^*(A)$ must be rigidly symmetric and, hence, type I. However, the proof fails to demonstrate this since if $A$ is a Banach $*$-algebra and $B$ is a norm-closed $*$-subalgebra of $\mr C^*(A)$, then it need not be the case that $A\cap B$ is norm-dense in $B$. Moreover, the following remark gives a counterexample to this conjecture.

Recall that a locally compact group $G$ with Haar measure $\mu$ is said to have \textit{polynomial growth} if for every compact subset $K$ of $G$, there exists $k\in\N$ and $C>0$ such that $\mu(K^n)\leq Cn^k$ for all $n\in \N$. Similarly, $G$ is of \textit{subexponential growth} if for all $t>0$, there exists $C>0$ so that $\mu(K^n)\leq Ce^{tn}$ for all $n\in \N$. We will make use of the following theorem.

\begin{thm}[Leptin-Poguntke \cite{LP}]\label{thm:FGL}
	If $G$ is a finitely generated group of polynomial growth, then $\ell^1(G)$ is rigidly symmetric.
\end{thm}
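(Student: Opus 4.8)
The plan is to unwind the definition of rigid symmetry and show that $\ell^1(G)\otimes_\gamma B$ is symmetric for \emph{every} C*-algebra $B$. The first step is the standard identification $\ell^1(G)\otimes_\gamma B\cong \ell^1(G,B)$, the convolution algebra of $B$-valued summable functions on $G$, which reduces the problem to proving that $\ell^1(G,B)$ is a symmetric Banach $*$-algebra. Since a finitely generated group of polynomial growth is amenable (indeed, by Gromov's theorem, virtually nilpotent), $\mr C^*(G)$ is nuclear and $\mr C^*(G)=\mr C^*_{\mr r}(G)$; consequently the enveloping C*-algebra of $\ell^1(G,B)$ is $\mr C^*(G)\otimes_{\max}B=\mr C^*_{\mr r}(G)\otimes_{\min}B$, and the natural map $\iota\colon \ell^1(G,B)\to \mr C^*_{\mr r}(G)\otimes_{\min}B$ is an injective, continuous $*$-homomorphism with dense range.

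Next I would invoke the Ptak--Hulanicki criterion: a Banach $*$-algebra with isometric involution that embeds as above is symmetric as soon as the spectral radius agrees with the Gelfand--Naimark seminorm on self-adjoint elements, i.e. $\mr r_{\ell^1(G,B)}(h)=\|\iota(h)\|_{\mr C^*_{\mr r}(G)\otimes_{\min}B}$ for every $h=h^*$. One inequality is automatic, since a continuous $*$-homomorphism cannot enlarge the spectrum, giving $\|\iota(h)\|=\mr r_{\mr C^*}(h)\le \mr r_{\ell^1(G,B)}(h)$; the content is the reverse bound. That spectral-radius agreement on self-adjoint elements forces the spectrum of every self-adjoint element to be real --- hence, by Shirali--Ford, symmetry --- is seen by applying the equality to real polynomials $p(h)$ and comparing $\sup|p|$ over the two spectra, where a Chebyshev-type estimate rules out non-real spectral points.

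The heart of the proof is therefore the growth estimate $\mr r_{\ell^1(G,B)}(h)\le \|\iota(h)\|_{\mr C^*}$ for self-adjoint $h$. For $h$ supported on a finite symmetric set $S\ni e$, the iterate $h^{*n}$ is supported on $S^n$, and polynomial growth gives $|S^n|\le C n^d$. Combining this with the coefficient-extraction inequality $\sup_{g}\|h^{*n}(g)\|_B\le \|\iota(h^{*n})\|_{\mr C^*_{\mr r}(G)\otimes_{\min}B}$ (which holds because the value at $g$ is a matrix coefficient of the operator $\sum_{g}\lambda(g)\otimes h^{*n}(g)$ acting on $\ell^2(G)\otimes \mc H$, with $B\hookrightarrow \mr B(\mc H)$ faithfully) yields
\[
\|h^{*n}\|_{\ell^1(G,B)}=\sum_{g\in S^n}\|h^{*n}(g)\|_B\le C n^d\,\|\iota(h)\|_{\mr C^*}^{\,n},
\]
and taking $n$-th roots as $n\to\infty$ kills the polynomial factor, giving the claim for finitely supported $h$. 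Crucially, this argument is insensitive to whether the coefficients are scalars or elements of $B$, which is exactly why polynomial growth yields \emph{rigid} symmetry rather than mere symmetry.

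The main obstacle is upgrading this estimate from finitely supported $h$ to arbitrary self-adjoint $h\in\ell^1(G,B)$: the constant $Cn^d$ depends on the support of $h$, so one cannot naively pass to the limit. This is the genuinely hard point of the theorem, and it is where polynomial growth (equivalently, the nilpotent structure furnished by Gromov's theorem) is indispensable. I would handle it by working with submultiplicative polynomial weights $\omega_s(g)=(1+|g|)^s$ satisfying the Gelfand--Raikov--Shilov condition $\omega_s(g^n)^{1/n}\to 1$ (available precisely because $|g^n|\le n|g|$), passing to the weighted algebras in which the support-growth estimate runs uniformly, and then letting $s\to 0$; alternatively, one reduces to $G$ nilpotent via a finite-index subgroup and inducts on the nilpotency class, which is closer to Leptin and Poguntke's original structural argument. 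Either way, the scalar-valued mechanism transfers verbatim to the $B$-valued setting, completing the proof that $\ell^1(G)$ is rigidly symmetric.
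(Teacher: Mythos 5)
There is a genuine gap, and it sits exactly at the step you yourself flag as ``the genuinely hard point.'' Note first that the paper does not prove this theorem at all --- it quotes it from Leptin--Poguntke \cite{LP} --- so your attempt must be measured against the known arguments and against the paper's closely related final Proposition. The part of your argument that is correct (the identification $\ell^1(G)\otimes_\gamma B\cong \ell^1(G,B)$, the enveloping C*-algebra $\mr C^*_{\mr r}(G)\otimes_{\min}B$, the coefficient-extraction inequality, the bound $|S^n|\leq Cn^d$, and the Barnes--Hulanicki/Shirali--Ford mechanism) establishes precisely that $\mr{c_c}(G,B)$ is \emph{quasi-symmetric} in $\ell^1(G,B)$. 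This is the discrete analogue of the paper's last Proposition, which proves by the same Cauchy--Schwarz-plus-growth argument that $\mr{C_c}(G,A)$ is quasi-symmetric in $\mr L^1(G,A)$ for every locally compact group of \emph{subexponential} growth --- and the paper is explicit that this conclusion is strictly weaker than symmetry: for subexponential growth $\mr L^1(G)$ need not be symmetric even though $\mr{C_c}(G)$ is always quasi-symmetric in it. The obstruction is that spectra are only upper semicontinuous, so a spectral-radius identity on a dense $*$-subalgebra does not pass to the completion; closing that gap is not a technical refinement but the entire content of the theorem.

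Neither of your two proposed repairs closes it. The weight scheme fails already at the level of coverage: there are $f\in\ell^1(G)$ lying in no weighted algebra $\ell^1(G,\omega_s)$ with $s>0$ (take $f$ radial with sphere masses $c_n$ satisfying $\sum_n c_n<\infty$ but $\sum_n c_n n^s=\infty$ for every $s>0$), so ``letting $s\to 0$'' still only reaches a dense proper $*$-subalgebra and the same discontinuity-of-spectrum problem recurs; moreover, the known GRS-weight results (Fendler--Gr\"ochenig--Leinert) run in the opposite direction, deducing symmetry of weighted algebras \emph{from} symmetry of the unweighted one. The structural alternative --- Gromov's theorem, reduction to a nilpotent finite-index subgroup, induction on nilpotency class --- is indeed how Leptin--Poguntke (and Kugler, for the operator-valued coefficients that ``rigidly'' requires) actually proceed, but in your write-up it is only named, not executed, and it is not a ``verbatim transfer'' of the growth estimate: it needs the machinery of generalized $\mr L^1$-algebras with C*-coefficients and finite-index/compact-extension arguments that have no counterpart in your sketch. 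As it stands, the proposal proves quasi-symmetry of $\mr{c_c}(G,B)$ in $\ell^1(G,B)$, not rigid symmetry of $\ell^1(G)$.
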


\begin{rem}
	Let $G$ be a finitely generated discrete group that is of polynomial growth but does not contain an abelian group of finite index (e.g., take $G$ to be the discrete Heisenberg group $\mr H_3(\Z)$). Then $\ell^1(G)$ is rigidly symmetric by Theorem \ref{thm:FGL}. However, $\mr C^*(G)=\mr C^*(\ell^1(G))$ is not rigidly symmetric since $G$ does not contain a finite index abelian subgroup by Thoma's characterization of discrete groups with Type I C*-algebras (see \cite{Thoma}). In particular, the enveloping C*-algebra of a rigidly symmetric Banach $*$-algebra need not be rigidly symmetric.
\end{rem}

Though the group algebra $\mr L^1(G)$ of a locally compact group with subexponential growth need not be symmetric, it was shown by Palma that $\mr{C_c}(G)$ is quasi-symmetric in $\mr L^1(G)$ for every locally compact group with subexponential growth (see \cite{Palma}). We finish this section by refining this result of Palma into a statement more analogous to Theorem \ref{thm:FGL}.

\begin{prop}
	Let $G$ be a locally compact group of subexponential growth and $A$ a C*-algebra. The $*$-algebra $\mr{C_c}(G,A)$ is quasi-symmetric in $\mr L^1(G,A)\cong \mr L^1(G)\otimes_{\gamma} A$.
\end{prop}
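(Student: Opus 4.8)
The plan is to reduce the statement to a spectral-radius identity for self-adjoint elements and then upgrade that identity to quasi-symmetry by a Hulanicki-type argument. Since $G$ has subexponential growth it is unimodular, so the involution on $\mr L^1(G,A)$ is $f^*(s)=f(s^{-1})^*$. First I would fix a faithful representation $A\subseteq \mr B(\mc H)$ and take $C:=\mr C^*_{\mr r}(G)\otimes_{\min}A$, realised on the Hilbert $A$-module $\mc E:=\mr L^2(G,A)$ with $A$-valued inner product $\langle \xi,\eta\rangle=\int_G \xi(s)^*\eta(s)\,d\mu(s)$ by left convolution. The canonical map $\iota\colon \mr L^1(G,A)\to C$ is a contractive $*$-homomorphism with dense range, so $\mr r_C(\iota a)\le \mr r_{\mr L^1(G,A)}(a)$ for all $a$. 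Because $\mr{C_c}(G,A)$ is a $*$-subalgebra it suffices to treat a fixed self-adjoint $f=f^*\in \mr{C_c}(G,A)$ and prove the reverse inequality $\mr r_{\mr L^1(G,A)}(f)\le \|\iota f\|_C$, which then forces equality.

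The heart of the argument is an operator-valued refinement of Palma's estimate. Each convolution power $f^{*m}$ is again self-adjoint, and using unimodularity together with the operator Cauchy--Schwarz inequality in $A$ one obtains the pointwise bound
$$\|f^{*2m}(s)\|_A=\Big\|\int_G f^{*m}(u)^*\,f^{*m}(us)\,d\mu(u)\Big\|_A\le \|f^{*m}\|_{\mc E}^2\qquad(s\in G),$$
where $\|g\|_{\mc E}^2=\big\|\int_G g(u)^*g(u)\,d\mu(u)\big\|_A$ is the Hilbert-module norm and right-invariance of $\mu$ identifies the two Cauchy--Schwarz factors. On the other hand, left convolution by $f$ acts on $\mc E$ as $\iota f$, so $\|f^{*m}\|_{\mc E}\le \|\iota f\|_C^{\,m-1}\|f\|_{\mc E}$. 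Since $\supp f^{*2m}\subseteq K^{2m}$ with $K=\supp f$, combining these with $\|f^{*2m}\|_{\mr L^1(G,A)}\le \mu(K^{2m})\sup_s\|f^{*2m}(s)\|_A$ yields
$$\|f^{*2m}\|_{\mr L^1(G,A)}^{1/2m}\le \mu(K^{2m})^{1/2m}\,\|\iota f\|_C^{\,(m-1)/m}\,\|f\|_{\mc E}^{1/m}.$$
Letting $m\to\infty$ and using $\mu(K^{2m})^{1/2m}\to 1$ (subexponential growth) gives $\mr r_{\mr L^1(G,A)}(f)\le \|\iota f\|_C$, hence $\mr r_{\mr L^1(G,A)}(f)=\|\iota f\|_C$ for every self-adjoint $f\in\mr{C_c}(G,A)$.

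Finally I would upgrade this identity to quasi-symmetry. Fix $x\in\mr{C_c}(G,A)$ and put $h=x^*x$; then $\iota h=(\iota x)^*(\iota x)\ge 0$, so $K_C:=\mr{Sp}_C(\iota h)\subseteq[0,\infty)$, while $K_C\subseteq K_B:=\mr{Sp}_{\mr L^1(G,A)}(h)$. For every real polynomial $p$ the element $p(h)$ is self-adjoint, and applying the spectral-radius identity to $p(h)$ (after passing to unitizations to absorb constant terms) together with the polynomial spectral mapping theorem gives $\max_{\lambda\in K_B}|p(\lambda)|=\max_{t\in K_C}|p(t)|$. Choosing real polynomials that separate any prospective point of $K_B$ from the compact real set $K_C$ (e.g.\ rescaled Chebyshev polynomials, which stay bounded on a real interval containing $K_C$ but blow up at points off it) forces $K_B\subseteq K_C\subseteq[0,\infty)$; this Hulanicki-type step may alternatively be cited from \cite{palmer}. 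Thus $\mr{Sp}_{\mr L^1(G,A)}(x^*x)\subseteq[0,\infty)$, proving the claim. The main obstacle is precisely the operator-valued $\mr L^2$ estimate: unlike the scalar case, the naive square function $\int_G\|g(s)\|_A^2\,d\mu$ is not controlled by the reduced norm, and a dimension-free bound is obtained only by passing through the pointwise operator Cauchy--Schwarz inequality, which replaces it by the Hilbert-module norm $\|g\|_{\mc E}$ that the reduced representation does control.
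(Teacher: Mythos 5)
Your proof is correct, and although it shares the paper's overall skeleton---prove the spectral-radius identity $\mr r_{\mr L^1(G,A)}(f)=\mr r_{\mr C^*_{\mr r}(G)\otimes_{\min}A}(f)$ for self-adjoint $f\in\mr{C_c}(G,A)$ using subexponential growth, then upgrade to containment of spectra by a Barnes--Hulanicki step---your central estimate is genuinely different, and in fact more careful than the paper's. The paper estimates $\|f^{n}\|_{\mr L^1(G,A)}\leq\|f^{n}\|_{\mr L^2(G,A)}\,\mu(S^n)^{1/2}\leq\|f^{n-1}\|_{\mr B(\mr L^2(G,A))}\|f\|_{\mr L^2(G,A)}\,\mu(S^n)^{1/2}$ and then identifies $\|\cdot\|_{\mr B(\mr L^2(G,A))}$ with the norm of $\mr C^*_{\mr r}(G)\otimes_{\min}A$; but the first inequality (scalar Cauchy--Schwarz) requires the Bochner norm $\bigl(\int\|g(s)\|_A^2\,d\mu\bigr)^{1/2}$, whereas the identification with the minimal tensor norm requires the Hilbert C*-module norm $\bigl\|\int g(s)^*g(s)\,d\mu\bigr\|_A^{1/2}$, and the two are not interchangeable in the needed direction: the Bochner norm dominates the module norm, the $\mr L^1$-against-module-norm inequality can fail even for self-adjoint elements (take $G=\Z_N$, $A=\mr M_N(\C)$, $g(s)=e_{s,-s}$: then $\|g\|_{\mr L^1}=N$ while the module norm is $1$ and $\mu(\supp g)^{1/2}=N^{1/2}$), and convolution with min-norm control acts boundedly on the module, not on the Bochner space. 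Your pointwise operator Cauchy--Schwarz bound $\|f^{*2m}(s)\|_A\le\|f^{*m}\|_{\mc E}^2$, integrated over $\supp f^{*2m}\subseteq K^{2m}$, is exactly what circumvents this, and your closing remark shows you identified the issue; this is what each approach buys: the paper's chain is shorter but loose at precisely this point, while yours is rigorous. One small correction to your final step: ``passing to unitizations to absorb constant terms'' is not justified as stated, since the spectral-radius identity is only available for self-adjoint elements of $\mr{C_c}(G,A)$ itself; instead restrict to real polynomials with $p(0)=0$ (replace $p_n$ by $p_n-p_n(0)$, which is harmless because $0$ lies in both augmented spectra), after which your rescaled-Chebyshev separation---Chebyshev polynomials stay bounded on an interval $[0,M]\supseteq\mr{Sp}_C(\iota h)\cup\{0\}$ and blow up geometrically at every complex point off that interval---does yield $\mr{Sp}_{\mr L^1(G,A)}(x^*x)\subseteq[0,M]\subseteq[0,\infty)$; alternatively, cite the Barnes--Hulanicki theorem as the paper does (the reference \cite{Barnes} is the sharper citation for this step than \cite{palmer}).
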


\begin{proof}
	 Suppose that $A$ is an arbitrary C$^*$-algebra. Without loss of generality, we may assume that $A$ is unital since $\mr L^1(G,A)$ embeds isometrically inside of $\mr L^1(G,\widetilde A)$ where $\widetilde A$ denotes the unitization of $A$. Take $f\in \mr{C_c}(G,A)$ to be a self-adjoint element, and let $S$ be a pre-compact, open, symmetric subset of $G$ with $\text{supp}\,f \subseteq S$. We will write $f^n$ to denote $\underbrace{f*\cdots*f}_{n\text{ times}}$. Then for every $n\in \N$ we have
	\begin{eqnarray*}
		\|f^{n}\|_{\mr L^1(G,A)} &=& \|f^{n} (1_{S^n}\otimes 1) \|_{\mr L^1(G,A)} \\
		&\leq & \|f^{n}\|_{\mr L^2(G,A)} \|1_{S^n}\otimes 1\|_{\mr L^2(G,A)}  \\
		&\leq &  \|f^{n-1}\|_{\mr B(\mr L^2(G,A))}\|f\|_{\mr L^2(G,A)} \mu(S^n)^{\frac{1}{2}}
		\\
		&=&  \|f^{n-1}\|_{\mr C^*_{\mr r}(G)\otimes_{\min} A}\|f\|_{\mr L^2(G,A)} \mu(S^n)^{\frac{1}{2}},
	\end{eqnarray*}
	where $\mu$ denotes the Haar measure of $G$. Here we use the fact that the norm of the minimal C*-tensor algebra $\mr C^*_{\mr r}(G)\otimes_{\min} A$ is the same as the one coming from canonical convolution of elements of $\mr L^1(G,A)$ on the Hilbert C*-module $\mr L^2(G,A)$. Hence, taking the $n^{\text{th}}$ root of both sides as $n$ tends towards infinity implies
	\begin{equation*}
		\mr r_{\mr L^1(G,A)}(f)\leq  \mr r_{\mr C^*_{\mr r}(G)\otimes_{\min} A}(f)\cdot\lim_{n\to \infty}\mu(S^n)^{\frac{1}{2n}}=\mr r_{\mr C^*_{\mr r}(G)\otimes_{\min} A}(f),
	\end{equation*}
	as $G$ has subexponential growth so that $\lim_{n\to \infty} \mu(S^n)^{\frac{1}{2n}}=1$. Thus $\mr r_{\mr L^1(G,A)}(f)=\mr r_{\mr C^*_{\mr r}(G)\otimes_{\min} A}(f)$
	for all $f^*=f\in \mr{C_c}(G,A)$. It now follows from the Barnes-Hulanicki Theorem (see \cite{Barnes}) that $$\mr{Sp}_{\mr L^1(G,A)}(f)=\mr{Sp}_{\mr C^*_{\mr r}(G)\otimes_{\min}A}(f)$$
	for all $f\in \mr{C_c}(G,A)$. So $\mr{C_c}(G,A)$ is quasi-symmetric in $\mr L^1(G,A)$.
\end{proof}

\section{Problems}

We finish off this paper by listing a few open problems. The first question is inspired by the fact that if $G$ is a locally compact group such that there exists $p\in (1,\infty)$ where the canonical quotient map $\mr C^*(G)\to\mr C^*(\mr{PF}^*_p(G))$ is injective, then $G$ is amenable (see \cite{SW-exotic}). The authors wonder whether the analogue of this result is true for C*-algebras.

\begin{question}\label{ques:1}
	Suppose $A$ and $B$ are C*-algebras such that there exist a H\"older conjugate pair $p,q\in (1,\infty)$ such that the $A\otimes_{\max}B\cong A\otimes_{\mr C^*_{p,q}}B$ canonically. Is it necessarily the case that $(A,B)$ is a nuclear pair, i.e.  $A\otimes_{\max}B\cong A\otimes_{\min} B$ canonically?
\end{question}

We also wonder whether results analogous to those obtained by Ozawa and Pisier in \cite{OP} are true for the bifunctors $(A,B)\mapsto A{\otimes_{\mr C^*_{p,q}}}B$.

\begin{question}\label{ques:2}
	Let $M$ and $N$ be nonnuclear von Neumann algebras. Is the canonical quotient map
	$M{\otimes_{\mr C^*_{p,q}}}N\to M{\otimes_{\mr C^*_{p',q'}}}N$
	non-injective for all $2\leq p'<p\leq \infty$. How about in the case when $M=N=\mr B(\ell^2(\N))$?
\end{question}

Lastly, we wonder whether the  bifunctors $(A,B)\mapsto A{\otimes_{\mr C^*_{p,q}}}B$ are functorial with respect to completely positive maps.

\begin{question}\label{ques:3}
	If $A,A',B,B'$ are C*-algebras, and $\varphi\colon A\to A'$ and $\psi\colon B\to B'$ are completely positive maps, is it necessarily the case that $\phi\otimes \psi\colon A\odot B\to A'\odot B'$ extends to a completely positive map
	$ A{\otimes_{\mr C^*_{p,q}}}B\to A'{\otimes_{\mr C^*_{p',q'}}}B'$?
\end{question}

\subsection*{Acknowledgements}
The authors are grateful to N. Ozawa, L. Turowska and an anonymous referee who independently suggested that one could use Voiculescu's theorem to show that the definition of $A\otimes_p B$ (in Definition \ref{D:independent}) is independent of the choice of the concrete representations of C*-algebras $A$ and $B$. This fact, proved in details in Theorem \ref{thm:independence}, answers a question asked in an earlier version of this paper.


\begin{thebibliography}{99}
	\bibitem{Barnes} A. B. Barnes, {\it When is the spectrum of a convolution operator on $L_p$ independent of $p$?} Proc. Edinburgh Math. Soc. (2) 33 (1990), no. 2, 327-332.
	
	\bibitem{Blackadar} B. Blackadar,
	Nonnuclear subalgebras of C*-algebras.
	J. Operator Theory 14 (1985), no. 2, 347-350.
	
	\bibitem{black-cuntz} B. Blackadar and J. Cuntz, \emph{Differential Banach
		algebra norms and smooth subalgebras of C*-algebras}, J. Operator Theory 26 (1991) 255-282.
	
	\bibitem{BP} D.P. Blecher and N.C. Philips, $\mr L^p$ operator algebras with approximate identities I.  Pacific J. Math. 303 (2019), no. 2, 401–457.
	
	\bibitem{BS} D.P. Blecher and R.R. Smith, The dual of the Haagerup tensor product. J. London Math. Soc. (2) 45 (1992), no. 1, 126-144.

	\bibitem{BD} F. F. Bonsall and J. Duncan, Complete normed algebras, Springer-Verlag, New York-Heidelberg, 1973.
	
	\bibitem{BF} M. Bo\'zejko and G. Fendler, Herz-Schur multipliers and completely bounded multipliers of the Fourier algebra of a locally compact group. Boll. Un. Mat. Ital. A (6) 3 (1984), no. 2, 297–302
	
	\bibitem{BR} M. Brannan and Z.-J. Ruan, $L_p$-representations of discrete quantum groups. J. Reine Angew. Math. 732 (2017), 165-210.
	
	\bibitem{BG} N. P. Brown and E. P. Guentner, New C*-completions of discrete groups and related spaces, Bull. Lond.
	Math. Soc. 45 (2013), no. 6, 1181-1193.
	
	\bibitem{BO} N. P. Brown and N. Ozawa, C*-algebras and finite-dimensional approximations, Graduate Studies in Mathematics, vol. 88, 2008.

	
	\bibitem{BEW1} A. Buss, S. Echterhoff and R. Willett, Exotic crossed products. Operator Algebras and Applications (Proceedings of the Abel Symposium 2015), 61-108.
	
	\bibitem{BEW2} A. Buss, S. Echterhoff and R. Willett, Exotic
		crossed products and the Baum-Connes conjecture. J. Reine Angew. Math.
	740 (2018), 111-159.
	
	\bibitem{bilinear-interpolation} A.-P. Calder\'on, Intermediate spaces and interpolation, the complex method. Studia Math. 24 (1964) 113-190.
	
	\bibitem{Chat} I. Chatterji, \emph{Introduction to the Rapid Decay property}.
	Contemporary Mathematics 2017, Volume 691, Around Langlands Correspondences, 55-72.
	
	\bibitem{Choi} Y. Choi, A twisted inclusion between tensor products
	of operator spaces. Unpublished manuscript, see arXiv:1606.06287v2.

	\bibitem{CHH} M. Cowling, U. Haagerup, U. and R. Howe, Almost $L^2$ matrix coefficients. J. Reine Angew. Math. 387 (1988), 97-110.
	
	\bibitem{dLS} T. de Laat and T. Siebenand, Exotic group C*-algebras of simple Lie groups with real rank one.  Ann. Inst. Fourier (Grenoble) 71 (2021), no. 5, 2117–2136.
	
	\bibitem{EL} E.G. Effros and E.C. Lance, Tensor products of operator algebras. Adv. Math. 25 (1977), no. 1, 1-34.
	
	\bibitem{er} E.G. Effros and Z.-J. Ruan, Operator spaces. London Mathematical Society Monographs. New Series, 23. The Clarendon Press, Oxford University Press, New York, 2000.
	
	\bibitem{Ey} P. Eymard, L'alg\`ebre de Fourier d'un groupe localement compact. Bull. Soc. Math. France 92 (1964), 181-236.
	
	\bibitem{GKL1} G. Fendler, K. Gr\"ochenig and M. Leinert,
	Convolution-dominated operators on discrete groups.
	Integral Equations Operator Theory 61 (2008), no. 4, 493-509.
	
	\bibitem{GKL2} G. Fendler, K. Gr\"ochenig and M. Leinert,
	Convolution-dominated integral operators. Noncommutative harmonic analysis with applications to probability II, 121-127,
	Banach Center Publ., 89, Polish Acad. Sci. Inst. Math., Warsaw, 2010.
	
	
	\bibitem{FL} G. Fendler and M. Leinert,
	Convolution dominated operators on compact extensions of abelian groups.
	Adv. Oper. Theory 4 (2019), no. 1, 99-112.
	
	\bibitem{Gardella} E. Gardella, A modern look at algebras of operators on $\mr L^p$-spaces. Expo. Math. 39 (2021), no. 3, 420--453..
	
	\bibitem{GT} E. Gardella and H. Thiel, Group algebras acting on $\mr L^p$-spaces.  J. Fourier Anal. Appl. 21 (2015), 1310-1343. 
	
	\bibitem{GL2} K. Gr\"{o}chenig and M. Leinert, {\it Wiener's lemma for twisted convolution and Gabor frames}, J. Amer. Math. Soc. 17 (2004), no. 1, 1-18.
	
	\bibitem{GK1} K. Gr\"{o}chenig and A. Klotz, {\it Norm-controlled inversion in smooth Banach algebras, I}, J. London Math. Soc. (2) 88 (2013), 49-64.
	
	\bibitem{GK2} K. Gr\"{o}chenig and A. Klotz, {\it Norm-controlled inversion in smooth Banach algebras, II},  Math. Nachr. 287 (2014), no. 8-9, 917-937.
	
	\bibitem{Jollisaint} P. Jolissaint, Rapidly decreasing functions in reduced C*-algebras of groups,
	Trans. Amer. Math. Soc., 317, 1990 (1), 167-196.
	
	\bibitem{Leptin} H. Leptin, Symmetrie in Banachschen Algebren. Arch. Math. (Basel) 27 (1976), no. 4, 394-400.
	
	\bibitem{KLQ2} S. Kaliszewski, M.B. Landstad and J. Quigg, Exotic group C*-algebras in noncommutative duality. New York J. Math. 19 (2013), 689-711.
	
	\bibitem{KLQ1} S. Kaliszewski, M. B. Landstad and J. Quigg, Ordered invariant ideals of Fourier-Stieltjes algebras. New
	York J. Math. 24 (2018), 1039-1055.
	
	\bibitem{KL} E. Kaniuth and A.T.-M. Lau, Fourier and Fourier-Stieltjes algebras on locally compact groups. Mathematical Surveys and Monographs, 231. American Mathematical Society, Providence, RI, 2018.
	
	\bibitem{Kugler}  W. Kugler, On the symmetry of generalized $L^1$-algebras. Math. Zeit. 168 (1979),
	241-262.
	\bibitem{KR} A. Kumar and V. Rajpal, Symmetry and quasi-centrality of the operator space projective tensor product. Arch. Math. (Basel) 99 (2012), no. 6, 519-529.
	
	\bibitem{LP} H. Leptin and D. Poguntke, Symmetry and Nonsymmetry for Locally Compact Groups. J. Funct. Anal. 33 (1979), 119-134.
	
	\bibitem{LY} B. Liao and G. Yu, K-theory of group Banach algebras and Banach property RD. Preprint, see	arXiv:1708.01982v2.
	
	\bibitem{Okayasu} R. Okayasu, Free group C*-algebras associated with $\ell_p$, Internat. J. Math. 25 (2014), no. 7, 1450065, 12 pp.
	
	\bibitem{OP} N. Ozawa and G. Pisier, A continuum of C*-norms on $B(H)\otimes B(H)$ and related tensor products.
	Glasg. Math. J. 58 (2016), no. 2, 433-443.
	
	\bibitem{Palma} R. Palma, {\it On the growth of Hermitian groups}, Groups Geom. Dyn. 9 (2015), no. 1, 29-53.
	
	\bibitem{palmer} T.W. Palmer, Banach algebras and the general theory of $*$-algebras. Vol. 2. $*$-algebras. Encyclopedia of Mathematics and its Applications, 79. Cambridge University Press, Cambridge, 2001.
	
	\bibitem{pisier} G. Pisier, Introduction to operator space theory. London Mathematical Society Lecture Note Series, 294. Cambridge University Press, Cambridge, 2003.
	
	\bibitem{Poguntke} D. Poguntke, Rigidly symmetric $L^1$-group algebras. Sem. Sophus Lie 2 (1992), no. 2, 189-197.
	
	\bibitem{PH} S. Pooya and S. Hejazian, Simple $L^p$-operator crossed products with unique trace property. J. Operator Theory 74 (2015), no. 1., 133-147.

	\bibitem{rennie} A. Rennie, {\it Spectral triples: examples and applications}, unpublished lecture notes, see http://www.tuhep.phys.tohoku.ac.jp/NCG-P/workshop2009/Japan-feb-09.pdf.
	
	\bibitem{RW} Z.-J. Ruan and M. Wiersma, On exotic group C*-algebras, J. Funct. Anal. 271 (2016), no. 2, 437-453.
	
	\bibitem{SW-exotic} E. Samei and M. Wiersma,  Exotic C*-algebras of geometric groups, J. Funct. Anal. 286 (2024), 110228.
	
	\bibitem{SW-Hermitian} E. Samei and M. Wiersma, Quasi-Hermitian locally compact groups are amenable.  Adv. Math. 359 (2020), 106897, 25 pp.
	
	\bibitem{Thoma} E. Thoma, Uber unit\"are Darstellungen abz\"ahlbarer, diskreter Gruppen, Math.
	Ann. 153 (1964), 111–138.
	
	\bibitem{W-tensor} M. Wiersma, C*-norms for tensor products of discrete group C*-algebras. Bull. Lond. Math. Soc. 47 (2015), no. 2, 219-224.
	
	\bibitem{W-const} M. Wiersma, \emph{Constructions of exotic group C$^*$-algebras}. Illinois J. Math. 60 (2016), no. 3-4, 655-667.
	
	\bibitem{W} M. Wiersma, $\mr L^p$-Fourier and Fourier-Stieltjes algebras for locally compact groups. J. Funct. Anal. 269 (2015),
	no. 12, 3928-3951.
	

\end{thebibliography}
\end{document}